\newtheorem{theorem}{Theorem}
\newtheorem{proposition}{Proposition}
\newtheorem{remark}{Remark}
\newtheorem{definition}{Definition}
\newtheorem{corollary}{Corollary}
\newcommand{\Mod}{\operatorname{mod}}
\newcommand{\Spec}{\operatorname{Spec}}
\newcounter{table_counter}	
\newcommand{\Table}{\refstepcounter{table_counter}\newline\newline{\it Table \arabic{table_counter}}.}
\begin{document}
\begin{flushleft}
УДК 514.764.227 + 514.765 + 517.984.56 + 511
\end{flushleft}
%\begin{flushleft}
%MSC 22E30, 49J15, 53C17
%\end{flushleft}

\title[Laplace operator spectrum]{Laplace operator spectrum on connected compact simple rank four Lie groups. I}
\author{I.\,A.\,Zubareva}
\address{I.A.Zubareva}
\address{Sobolev Institute of Mathematics SD RAS, Omsk Branch, \newline 13 Pevtsova str.,  Omsk, 644099, Russia}
\email{i\_gribanova@mail.ru}
\maketitle
\maketitle {\small
\begin{quote}
\noindent{\sc Abstract.}
In this paper are given explicit calculations of Laplace operator spectrum for smooth real/complex--valued functions on all connected
compact simple rank four Lie groups with biinvariant Riemannian metric, corresponding to  root systems $B_4$, $C_4$, $D_4$ and established a connection
of obtained formulas with the number theory and integer quadratic forms from two, three and four variables.
\end{quote}}

{\small
\begin{quote}
\noindent{\textit{Key words and phrases:}} group representation, Killing form, Laplace operator, quadratic forms, spectrum.

\end{quote}}

\section*{Introduction}

In paper \cite{BerSvir1} are studied the spectrum of the Laplace(-Beltrami) operator on smooth real-valued functions defined on compact normal homogeneous
Riemannian manifolds. It was shown that in some sense this problem could be reduced to considerations of compact simply connected (connected) simple Lie groups $G$ with
biinvariant (i.e. invariant relative to left and right shifts) Riemannian metric $\nu$. In the last case it is suggested an algorithm for the search of Laplacian
spectrum via representations of Lie algebras of Lie groups $G$. In paper \cite{Svir3} this algorithm is generalized to the case
of arbitrary connected compact  simple Lie group $G$.

In our paper by means of the search algorithms for Laplacian spectrum from \cite{Svir3} we conduct explicit calculations of Laplacian spectrum for smooth
real/complex-valued
functions on all compact connected simple rank four Lie groups with biinvariant Riemannian metric, corresponding to  root systems $B_4$, $C_4$, $D_4$
and set a connection of obtained formulas to the number theory and integer quadratic forms from two, three and four variables.

The author is grateful to Professor V.N. Berestovskii for useful suggestions.

\section{Preliminaries}

Let $G$ be a compact connected simple Lie group with biinvariant Riemannian metric $\nu$.
The set $\mbox{Spec}(G,\nu)$ of all eigenvalues of Laplace--Beltrami operator $\Delta$ on smooth real-valued functions defined on $(G,\nu)$
with taking into account of multiplicity of eigenvalues, i.e. dimension of spaces of corresponding eigenfunctions, is called {\it the spectrum} of Laplace operator.
The spectrum of Lie group $(G,\nu)$ can be presented as follows:
\begin{equation}\label{Eq:spec}
\mbox{Spec}(G,\nu)=\{0=\lambda_0> \lambda_1\geq \lambda_2\geq\ldots\}.
\end{equation}
The Laplacian is naturally generalized onto complex-valued functions.

\begin{definition}
Bilinear (symmetric) form $k_{\rho}$ on a Lie algebra $\frak{g}$ defined by formula
$$k_{\rho}(u,v)={\rm trace}(\rho(u)\rho(v)),\quad u,\,v\in\frak{g},$$
is said to be the form associated with a representation $\rho$ of $\frak{g}$. The form
$k_{{\rm ad}}$, where ${\rm ad}(u)(v):=[u,v]$ is the adjoint representation of Lie algebra $\frak{g}$, is called
the Killing form of Lie algebra $\frak{g}$.
\end{definition}

We note that a compact connected Lie group $G$ is simple if and only if the adjoint representation ${\rm ad}$ of its Lie algebra $\frak{g}$ is irreducible.
In addition for any irreducible non-zero representation $\rho$ of the Lie algebra $\mathfrak{g}$ the form $k_{\rho}$ is negatively defined and is proportional
to the scalar product $\nu$.

The following proposition holds (see for example \cite{BerSvir2}).

\begin{proposition}\label{Prop:spec_ad}
If $(G,\nu)$ is a compact connected simple  $m$-dimensional Lie group with
\linebreak biinvariant Riemannian  metric $\nu$ such that $\nu(e)=-k_{{\rm ad}}$ then for the adjoint
representation ${\rm Ad}$ the Lie group $G$:
$$\lambda_{{\rm Ad}}=-1,\quad\dim{\rm Ad} = d_{{\rm Ad}} = m.$$
\end{proposition}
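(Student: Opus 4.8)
The plan is to reduce the determination of $\lambda_{\Ad}$ to a Casimir-operator computation and then exploit the irreducibility of the adjoint representation recorded above. The dimension count is immediate and I would dispose of it first: the adjoint representation $\Ad$ of $G$ acts on the Lie algebra $\mathfrak{g}$, so $d_{\Ad}=\dim\Ad=\dim\mathfrak{g}=m$, the last equality holding because $G$ is $m$-dimensional.

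For the eigenvalue I would invoke the standard Peter--Weyl description of the Laplacian on $(G,\nu)$. Fix a basis $\{e_i\}_{i=1}^m$ of $\mathfrak{g}$ that is orthonormal for $\nu(e)$. In the left-invariant realization, $\Delta=\sum_i X_{e_i}^2$ acts on a matrix coefficient $g\mapsto\langle\rho(g)v,w\rangle$ of an irreducible representation $\rho$ by the routine computation $X_{e_i}^2\langle\rho(\cdot)v,w\rangle=\langle\rho(\cdot)\rho(e_i)^2v,w\rangle$, so that $\Delta$ multiplies this coefficient by the scalar by which the Casimir operator $C_\rho:=\sum_i\rho(e_i)^2$ acts. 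Schur's lemma gives $C_\rho=\lambda_\rho\cdot\mathrm{Id}$ because $\rho$ is irreducible, whence the Laplacian eigenvalue attached to $\rho$ equals this Casimir scalar $\lambda_\rho$. I now specialize to $\rho=\ad$, which is irreducible precisely because $G$ is simple, as recalled in the preliminaries; thus $\lambda_{\Ad}$ equals the Casimir scalar of the adjoint representation.

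The computation then proceeds by taking the trace and using the normalization $\nu(e)=-k_{\ad}$. Orthonormality of $\{e_i\}$ means $\nu(e)(e_i,e_i)=1$, hence $k_{\ad}(e_i,e_i)=\trace(\ad(e_i)\ad(e_i))=-1$ for every $i$. Therefore
\begin{equation*}
\trace(C_{\ad})=\sum_{i=1}^m\trace(\ad(e_i)^2)=\sum_{i=1}^m k_{\ad}(e_i,e_i)=-m.
\end{equation*}
On the other hand $C_{\ad}=\lambda_{\Ad}\cdot\mathrm{Id}_{\mathfrak{g}}$ forces $\trace(C_{\ad})=\lambda_{\Ad}\cdot m$, and comparing the two expressions yields $\lambda_{\Ad}=-1$.

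The only genuinely delicate point is the first identification, namely that the Laplacian eigenvalue on matrix coefficients coincides with the Casimir scalar \emph{with the correct sign}; this is where the biinvariance of $\nu$ enters, ensuring that the left-invariant fields $X_{e_i}$ suffice and that the metric-dependent normalization is coherent. Once the sign convention $\Delta=\sum_i X_{e_i}^2$ (consistent with the non-positive spectrum in \eqref{Eq:spec}) is fixed, everything downstream is a one-line trace identity, with the irreducibility of $\ad$ and the normalization $\nu(e)=-k_{\ad}$ doing the remaining work.
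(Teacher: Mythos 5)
Your proof is correct, but it cannot coincide with ``the paper's proof'' because the paper gives none: Proposition~\ref{Prop:spec_ad} is quoted with a pointer to \cite{BerSvir2}, and within this paper's logic it is really built into the normalization of Theorem~\ref{alg} --- the constant $b=\langle\tilde{\alpha}+\beta,\tilde{\alpha}+\beta\rangle-\langle\beta,\beta\rangle$ is exactly the Freudenthal--Casimir eigenvalue of the irreducible representation with highest weight $\tilde{\alpha}$, i.e.\ of the (complexified) adjoint representation, so formula (\ref{lam}) with $\Lambda=\tilde{\alpha}$, $\gamma=1$ returns $\lambda_{\Ad}=-1$ by construction. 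Your route is genuinely different and more elementary: instead of highest weights and root systems you compute the Casimir scalar directly, using $\trace(\ad(e_i)^2)=k_{\ad}(e_i,e_i)=-\nu(e)(e_i,e_i)=-1$ for a $\nu(e)$-orthonormal basis, so that $\trace(C_{\ad})=-m=\lambda_{\Ad}\,m$. What this buys is an explanation of \emph{why} the normalization by $b$ in the algorithm produces eigenvalue $-1/\gamma$ for the adjoint representation, rather than an appeal to that normalization; it also needs no classification data. Two small points you could tighten: (i) the identity $\Delta=\sum_i X_{e_i}^2$ uses that the left-invariant fields form an orthonormal frame whose divergence terms cancel, $\sum_i\nabla_{X_{e_i}}X_{e_i}=0$, which is where biinvariance (or unimodularity) actually enters --- you flag this but do not verify it; (ii) since $\ad$ is a \emph{real} irreducible representation, Schur's lemma by itself only makes the commutant a division algebra, so to get $C_{\ad}=\lambda_{\Ad}\mathrm{Id}$ you should either pass to the complexification (still irreducible, as $\mathfrak{g}^{\mathbb{C}}$ is simple) or note that $C_{\ad}$ is $\nu(e)$-symmetric, being a sum of squares of skew-symmetric operators, hence diagonalizable with invariant eigenspaces. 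Neither point affects the validity of your conclusion.
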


Ommiting details,  let us present the calculation algorithm of Laplacian spectra of all Lie groups $(G,\nu)$ with  fixed Lie algebra, stated in \cite{Svir3}
 (see corollary 5), using tables~I--IX from \cite{Burb}, where $\rho$ denotes the vector $\beta$.

\begin{theorem}
\label{alg}
To calculate Laplacian spectra for all compact connected simple Lie groups $G$ with simple Lie algebra $\frak{g}$ with root system $\Gamma$ and
biinvariant Riemannian metric $\nu$ satisfying condition $\nu(e)=-\gamma k_{ad}$, where $\gamma>0$, one needs to fulfil the following actions:

1) calculate expression
$b=\langle\tilde{\alpha}+\beta,\tilde{\alpha}+\beta\rangle-\langle\beta,\beta\rangle$,
where $\tilde{\alpha}$ is highest (maximal) root,
assuming that relative to scalar product $\langle\cdot,\cdot\rangle$  vectors $\epsilon_i$
from corresponding table in \cite{Burb} are mutually orthogonal and unitary;

2) take scalar product $\left(\cdot,\cdot\right)=\frac{1}{b}\langle\cdot,\cdot\rangle$;

3) find the fundamental weights $\bar{\omega}_1,\dots,\bar{\omega}_l$ of the Lie algebra $\frak{g}$
of the Lie group $G$ (if $\frak{g}$ has rank $l$) by corresponding table from \cite{Burb};

4) for any highest weight $\Lambda\in\Lambda^{+}_1(\Gamma)$, i.e. for any
$\Lambda=\sum\limits_{i=1}^{l}\Lambda_{i}\bar{\omega}_i$, where $\Lambda_{i}~\in~\mathbb{Z}$ and
$\Lambda_{i}\geq 0$ for $i=1,\dots,l$, find eigenvalue $\lambda(\Lambda)$ of the Laplace operator, corresponding to highest weight $\Lambda$, by formula
\begin{equation}
\label{lam}
\lambda(\Lambda)=-\frac{1}{\gamma}\Big[\langle\Lambda+\beta,\Lambda+\beta\rangle-\langle\beta,\beta\rangle\Big]
\end{equation}
and dimension $d(\Lambda+\beta)$ of irreducible complex representation of complex span of the Lie algebra $\frak{g}$ with highest weight $\Lambda$ by formula
\begin{equation}
\label{dim}
d(\Lambda+\beta)=\prod\limits_{\alpha\in\Gamma^{+}}\frac{\left(\Lambda+\beta,\alpha\right)}{{\left(\beta,\alpha\right)}};
\end{equation}

5) for any lattice $\Lambda$, satisfying relation
$\Lambda_0(\Gamma)\subseteq\Lambda\subseteq\Lambda_1(\Gamma)$, where $\Lambda_0(\Gamma)$ and
$\Lambda_1(\Gamma)$ are lattices generated by simple roots and fundamental weights, obtained in p.~1) and p.~3) respectively, $G$ is Lie group with Lie algebra $\frak{g}$,
corresponding to characteristic lattice $\Lambda=\Lambda(G)$, fulfil the following three actions:

6) find the set of highest weights $\Lambda^{+}(G)=\Lambda(G)\cap\Lambda^{+}_1(\Gamma)$, defining it via fundamental weights $\bar{\omega}_1,\dots,\bar{\omega}_l$;

7) for any highest weight $\Lambda\in\Lambda^{+}(G)$ find from p.~4) eigenvalue
$\lambda(\Lambda)$ and dimension $d(\Lambda+\beta)$ of irreducible complex representation corresponding to the weight $\Lambda$;

8) find multiplicity of any eigenvalue $\lambda=\lambda(\Lambda)$ by formula
\begin{equation}
\label{abc}
\sigma(\lambda)=\sum\limits_{\Lambda:\,\lambda(\Lambda)=\gamma\lambda}\,\prod\limits_{\;\alpha\in\Gamma^{+}}
\left(\frac{\left(\Lambda+\beta,\alpha\right)}{\left(\beta,\alpha\right)}\right)^2,
\end{equation}
obtaining in this way the spectrum $\Spec(G,\nu)$ of the Lie group $G$, corresponding to characteristic lattice $\Lambda(G)$.

Thus, we get all spectra $\Spec(G,\nu)$ of Lie groups $G$ with Lie algebra $\frak{g}$ and metric
$\nu$.
\end{theorem}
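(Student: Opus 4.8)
The plan is to derive the algorithm from three classical ingredients: the Peter--Weyl decomposition of $L^2(G)$, the identification of the Laplace--Beltrami operator of a biinvariant metric with the Casimir element, and the Weyl dimension and Freudenthal eigenvalue formulas. Everything reduces to organizing these facts around the normalization chosen in steps 1)--2).

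First I would invoke the Peter--Weyl theorem: for a compact connected Lie group $G$ the space of square-integrable complex functions decomposes as the Hilbert direct sum $L^2(G)=\bigoplus_{\pi}V_\pi\otimes V_\pi^{*}$ over the finite-dimensional irreducible unitary representations $\pi$ of $G$, each contributing its matrix-coefficient block of dimension $d_\pi^{2}$. The decisive point for the non-simply-connected case is that the admissible $\pi$ are exactly those whose highest weights lie in the character lattice of $G$; translated into root-system data this is precisely the lattice $\Lambda(G)$ with $\Lambda_0(\Gamma)\subseteq\Lambda(G)\subseteq\Lambda_1(\Gamma)$ of steps 5)--6), so that the allowed highest weights form $\Lambda^{+}(G)=\Lambda(G)\cap\Lambda^{+}_1(\Gamma)$.

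Next I would show that $\Delta$ acts as a scalar on each block. Since $\nu$ is biinvariant, $\Delta$ coincides, up to the normalization below, with the Casimir operator of $\mathfrak{g}$, which commutes with $G$ and hence acts on each irreducible $V_\pi$ by a single scalar; by Freudenthal's formula this scalar equals $\langle\Lambda+\beta,\Lambda+\beta\rangle-\langle\beta,\beta\rangle$ in the scalar product of step 1), where $\beta=\rho$ is the half-sum of positive roots. The constant $b$ of steps 1)--2) rescales $\langle\cdot,\cdot\rangle$ to the product $(\cdot,\cdot)$ dual to $-\gamma k_{\ad}$; tracking this rescaling through the Casimir eigenvalue produces exactly the factor $-\tfrac{1}{\gamma}$ in \eqref{lam}. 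The dimension $d(\Lambda+\beta)$ is then the Weyl dimension formula \eqref{dim}, and summing $d(\Lambda+\beta)^{2}$ over all $\Lambda\in\Lambda^{+}(G)$ that yield the same eigenvalue gives the multiplicity \eqref{abc}, i.e. the total dimension of the $\lambda$-eigenspace by Peter--Weyl.

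The main obstacle is bookkeeping rather than conceptual: one must check that $b$ indeed converts $\langle\cdot,\cdot\rangle$ into the metric normalization $\nu(e)=-\gamma k_{\ad}$ uniformly across all four root systems, so that the single factor $1/\gamma$ in \eqref{lam} is correct and independent of $\Lambda$. This is where Proposition~\ref{Prop:spec_ad} is essential as a calibration: evaluating \eqref{lam} on the highest root $\tilde\alpha$, which is the highest weight of $\Ad$, must reproduce $\lambda_{\Ad}=-1$, and this condition pins down the definition of $b$ and confirms the formula. The remaining point, that $\Lambda^{+}(G)$ exhausts the representations of $G$ and contains nothing more, reduces to the standard correspondence between intermediate lattices $\Lambda_0(\Gamma)\subseteq\Lambda\subseteq\Lambda_1(\Gamma)$ and the isogeny class of $G$, which I would cite from the structure theory of compact Lie groups.
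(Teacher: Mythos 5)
Your proposal is correct and is essentially \emph{the} argument behind this theorem: the paper itself offers no proof (it imports the algorithm from \cite{Svir3}, ``omitting details''), and the proof in that source is precisely your combination of Peter--Weyl, the identification of $\Delta$ with the Casimir element of the biinvariant metric, the Casimir eigenvalue and Weyl dimension formulas, and the correspondence between intermediate lattices $\Lambda_0(\Gamma)\subseteq\Lambda\subseteq\Lambda_1(\Gamma)$ and isogeny types of $G$. One small imprecision: the rescaled product $(\cdot,\cdot)=\tfrac{1}{b}\langle\cdot,\cdot\rangle$ is dual to $-k_{\ad}$ rather than to $-\gamma k_{\ad}$, the factor $\tfrac{1}{\gamma}$ entering separately because scaling the metric by $\gamma$ scales the Laplacian by $\tfrac{1}{\gamma}$; your calibration against Proposition~\ref{Prop:spec_ad} (the adjoint representation has Casimir eigenvalue $1$ with respect to the Killing form, so $\lambda(\tilde{\alpha})$ must equal $-\tfrac{1}{\gamma}$) fixes the normalization uniquely and absorbs this distinction, so the argument stands.
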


\begin{remark}
In formulas (\ref{dim}) and (\ref{abc}) applied in p.~4) and p.~8) of theorem \ref{alg} we can use instead of $\left(\cdot,\cdot\right)$ every scalar product proportional to it, in particular,
$\langle\cdot,\cdot\rangle$ from p.~1 of theorem \ref{alg}.
\end{remark}

Maximal fundamental group $\Lambda_1(\Gamma)/\Lambda_0(\Gamma)$ plays the main role in classification of simple Lie groups with given (simple) Lie algebra.
The following proposition holds (see for example \cite{BZS}).

\begin{proposition}\label{Col:LCalc}
Let maximal fundamental group $\Lambda_1(\Gamma)/\Lambda_0(\Gamma)$ of Lie algebra $\mathfrak{g}$ has prime order.
Then the family of non-isomorphic compact connected Lie groups with Lie algebra
$\mathfrak{g}$ consists of two groups: simply connected Lie group $G_1$ with the center
$\Lambda_1(\Gamma)/\Lambda_0(\Gamma)$ and weight lattice $\Lambda_1(\Gamma)$, coinciding with weight lattice of Lie algebra $\Lambda(\mathfrak{g})$,
and Lie group $G_0$ without center,
with fundamental group $\Lambda_1(\Gamma)/\Lambda_0(\Gamma)$ and weight lattice $\Lambda_0(\Gamma)$.
\end{proposition}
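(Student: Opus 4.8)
The plan is to reduce the statement to the standard classification of compact connected Lie groups sharing a fixed simple Lie algebra, combined with the elementary fact that a finite group of prime order has no proper nontrivial subgroups. First I would recall the classification already invoked in point~5) of Theorem~\ref{alg} and established in \cite{Svir3,BZS}: the non-isomorphic compact connected Lie groups $G$ with Lie algebra $\mathfrak{g}$ and root system $\Gamma$ are in bijection with the characteristic lattices $\Lambda=\Lambda(G)$ satisfying $\Lambda_0(\Gamma)\subseteq\Lambda\subseteq\Lambda_1(\Gamma)$. Equivalently, writing $G_1$ for the simply connected group, each such $G$ is a quotient $G_1/C$ by a central subgroup $C\subseteq Z(G_1)\cong\Lambda_1(\Gamma)/\Lambda_0(\Gamma)$, and the assignment $\Lambda\mapsto C$ is an inclusion-reversing bijection between intermediate lattices and subgroups of $\Lambda_1(\Gamma)/\Lambda_0(\Gamma)$, under which $\pi_1(G)\cong C$ and $Z(G)\cong Z(G_1)/C$.

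Second, I would apply the hypothesis. By assumption $\Lambda_1(\Gamma)/\Lambda_0(\Gamma)$ has prime order $p$, hence is cyclic. By Lagrange's theorem its only subgroups are the trivial subgroup and the whole group, so the only intermediate lattices are the two endpoints $\Lambda_0(\Gamma)$ and $\Lambda_1(\Gamma)$ themselves; no lattice lies strictly between them. Through the bijection this produces exactly two non-isomorphic groups.

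Third, I would identify these two groups with their invariants. The endpoint $\Lambda=\Lambda_1(\Gamma)$ corresponds to $C=\{0\}$: this is the simply connected group $G_1$, whose weight lattice coincides with $\Lambda_1(\Gamma)=\Lambda(\mathfrak{g})$ and whose center is $Z(G_1)\cong\Lambda_1(\Gamma)/\Lambda_0(\Gamma)$. The endpoint $\Lambda=\Lambda_0(\Gamma)$ corresponds to $C=Z(G_1)$, the full center: this is the adjoint group $G_0=G_1/Z(G_1)$, which is centerless, has fundamental group $\pi_1(G_0)\cong\Lambda_1(\Gamma)/\Lambda_0(\Gamma)$, and has weight lattice $\Lambda_0(\Gamma)$, since a representation descends to $G_0$ exactly when its weights are trivial on the whole center, i.e.\ lie in the root lattice. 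Finally, $G_0$ and $G_1$ are non-isomorphic because $\pi_1(G_1)=0$ while $\pi_1(G_0)$ has order $p>1$ (equivalently, one has nontrivial center and the other does not), so the family consists of precisely these two groups.

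The only genuine input is the classification theorem, which is quoted from \cite{BZS}; everything else is the prime-order argument together with a bookkeeping of the center and fundamental-group invariants at the two endpoints. Accordingly the main obstacle is purely one of conventions: one must fix whether the characteristic lattice $\Lambda(G)$ records $Z(G)$ or $\pi_1(G)$, so that the endpoints $\Lambda_0(\Gamma)$ and $\Lambda_1(\Gamma)$ are correctly matched to $G_0$ and $G_1$. Once the direction of this correspondence is pinned down, the conclusion is immediate.
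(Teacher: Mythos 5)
Your proof is correct, but note that the paper itself contains no proof of Proposition~\ref{Col:LCalc} to compare against: it is stated with the remark ``(see for example \cite{BZS})'' and used as a quoted fact. What you supply is the standard argument that the cited source relies on: the inclusion-reversing bijection between central subgroups $C\subseteq Z(G_1)\cong\Lambda_1(\Gamma)/\Lambda_0(\Gamma)$ and intermediate lattices $\Lambda_0(\Gamma)\subseteq\Lambda\subseteq\Lambda_1(\Gamma)$, under which $\pi_1(G_1/C)\cong C$ and $Z(G_1/C)\cong Z(G_1)/C$, followed by Lagrange's theorem to conclude that a group of prime order has only the two trivial subgroups, hence only the endpoint lattices occur. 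Your identification of the endpoints --- $\Lambda_1(\Gamma)$ with the simply connected group and $\Lambda_0(\Gamma)$ with the centerless (adjoint) group --- matches the convention actually used in the paper's Table~\ref{Tab:list} (e.g.\ $\Lambda(\Spin(9))=\Lambda_1$ with $\pi_1=0$ and center $\mathbb{Z}_2$, versus $\Lambda(\SO(9))=\Lambda_0$ with $\pi_1=\mathbb{Z}_2$ and trivial center), so the conventions issue you flag at the end is resolved in the direction you chose. The only step worth making explicit, were the argument to be fully self-contained, is why every compact connected group with Lie algebra $\mathfrak{g}$ is of the form $G_1/C$ in the first place: this rests on Weyl's theorem that the universal covering group of a compact semisimple Lie algebra is itself compact, so that $\pi_1(G)$ embeds as a central subgroup of $G_1$; you implicitly absorb this into the classification theorem you quote, which is legitimate but deserves a sentence.
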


We see from description of all irreducible root systems in tables~I--IX from \cite{Burb} that there are only five irreducible systems of rank four:
$A_4$, $B_4$, $C_4$, $D_4$, $F_4$.

In Table \ref{Tab:list} we present the list of Lie groups, corresponding to root systems $B_4$, $C_4$, $D_4$.

\begin{table}[h]
\centering
\begin{tabular}{|p{6cm}|c|c|c|c|c|}
\hline
\centering{Lie group $G$}&$\Gamma$&$\Lambda(G)$&$\pi_1(G)$&$\mbox{C}(G)$&$\dim G$\\
\hline
\centering{$\mbox{Spin}(9)$}&$B_4$&$\Lambda_1$&$0$&$\mathbb{Z}_2$&$36$\\
\hline
\centering{$\mbox{SO}(9)$}&$B_4$&$\Lambda_0$&$\mathbb{Z}_2$&$0$&$36$\\
\hline	
\centering{$\mbox{Sp}(4)$}&$C_4$&$\Lambda_1$&$0$&$\mathbb{Z}_2$&$36$\\
\hline
\centering{$\mbox{PSp}(4)$}&$C_4$&$\Lambda_0$&$\mathbb{Z}_2$&$0$&$36$\\
\hline
\centering{$\mbox{Spin}(8)$}&$D_4$&$\Lambda_1$&$0$&$\mathbb{Z}_2\oplus\mathbb{Z}_2$&$28$\\
\hline
\centering{$\mbox{SO}(8)$}&$D_4$&$\Lambda$&$\mathbb{Z}_2$&$\mathbb{Z}_2$&$28$\\
\hline
\centering{$\mbox{PSO}(8)$}&$D_4$&$\Lambda_0$&$\mathbb{Z}_2\oplus\mathbb{Z}_2$&$0$&$28$\\
\hline
\end{tabular}
\Table\label{Tab:list} Compact connected simple rank four Lie groups, corresponding to root systems $B_4$, $C_4$, $D_4$.
\end{table}

Using theorem \ref{alg} and table \ref{Tab:list} we shall find in next sections Laplacian spectra of all compact connected simple rank four
 Lie groups corresponding to root systems $B_4$, $C_4$, $D_4$.

\section{Calculation of Laplacian spectrum for Lie groups $\mbox{Spin}(9)$ and $\mbox{SO}(9)$}

By table \ref{Tab:list}, to Lie groups under consideration corresponds the root system $B_4$. We apply table~II from \cite{Burb}.
Simple roots are
$\alpha_1=\varepsilon_1-\varepsilon_2,\,\,\alpha_2=\varepsilon_2-\varepsilon_3,\,\,\alpha_3=\varepsilon_3-\varepsilon_4,\,\,\alpha_4=\varepsilon_4$;
maximal root is $\tilde{\alpha}=\varepsilon_1+\varepsilon_2$.
Positive roots are $\varepsilon_i$, $i=1,2,3,4$; $\varepsilon_i\pm\varepsilon_j$, $1\leq i<j\leq 4$.
The sum of positive roots is equal to
$2\beta=7\varepsilon_1+5\varepsilon_2+3\varepsilon_3+\varepsilon_4$, whence
\begin{equation}
\label{s1}
\beta=\frac{7\varepsilon_1+5\varepsilon_2+3\varepsilon_3+\varepsilon_4}{2},\quad \tilde{\alpha}+\beta=
\frac{9\varepsilon_1+7\varepsilon_2+3\varepsilon_3+\varepsilon_4}{2}.
\end{equation}

We act according to algorithm suggested in theorem \ref{alg}.

1) $b=\langle\tilde{\alpha}+\beta,\tilde{\alpha}+\beta\rangle-\langle\beta,\beta\rangle=35-21=14.$

2) $\left(\cdot,\cdot\right)=\frac{1}{14}\langle\cdot,\cdot\rangle.$

3) Fundamental weights have the form
$$\bar{\omega}_1=\varepsilon_1,\quad
\bar{\omega}_2=\varepsilon_1+\varepsilon_2,\quad
\bar{\omega}_3=\varepsilon_1+\varepsilon_2+\varepsilon_3,\quad
\bar{\omega}_4=\frac{\varepsilon_1+\varepsilon_2+\varepsilon_3+\varepsilon_4}{2}.$$
It is easy to see that
$$\tilde{\alpha}=\bar{\omega}_2,\quad\beta=\bar{\omega}_1+\bar{\omega}_2+\bar{\omega}_3+\bar{\omega}_4.$$

4) Let
$\Lambda=\sum\limits_{i=1}^{4}\Lambda_i\bar{\omega}_i$, where $\Lambda_i\in\mathbb{Z}_{+}$, $i=1,2,3,4$. Then
$$\Lambda+\beta=\sum_{i=1}^{4}(\Lambda_i+1)\bar{\omega}_i=\sum_{i=1}^{4}\nu_i\bar{\omega}_i=$$
\begin{equation}
\label{s2}
=\frac{1}{2}[(2\nu_1+2\nu_2+2\nu_3+\nu_4)\varepsilon_1+
(2\nu_2+2\nu_3+\nu_4)\varepsilon_2+
(2\nu_3+\nu_4)\varepsilon_3+\nu_4\varepsilon_4],
\end{equation}
where
$\nu_i=\Lambda_i+1,\,\,\nu_i\in\mathbb{N},\,\,i=1,2,3,4.$

By formula (\ref{lam}), eigenvalue $\lambda(\Lambda),$ corresponding to highest weight
$\Lambda$, is equal to
$$\lambda(\Lambda)=-\frac{1}{14\gamma}[\langle\Lambda+\beta,\Lambda+\beta\rangle-\langle\beta,\beta\rangle]=$$
\begin{equation}
\label{b1}
=-\frac{1}{56\gamma}\left[(2\nu_1+2\nu_2+2\nu_3+\nu_4)^2+(2\nu_2+2\nu_3+\nu_4)^2+(2\nu_3+\nu_4)^2+\nu_4^2-84\right].
\end{equation}

Calculation by formula (\ref{dim}) of dimension $d(\Lambda+\beta)$ of representation $\rho(\Lambda)$
corresponding to highest weight $\Lambda$ gives
$$d(\Lambda+\beta)=\frac{(\Lambda+\beta,\varepsilon_1)}{(\beta,\varepsilon_1)}\cdot\frac{(\Lambda+\beta,\varepsilon_2)}{(\beta,\varepsilon_2)}\cdot
\frac{(\Lambda+\beta,\varepsilon_3)}{(\beta,\varepsilon_3)}\cdot\frac{(\Lambda+\beta,\varepsilon_4)}{(\beta,\varepsilon_4)}$$
$$\times\frac{(\Lambda+\beta,\varepsilon_1-\varepsilon_2)}{(\beta,\varepsilon_1-\varepsilon_2)}
\cdot\frac{(\Lambda+\beta,\varepsilon_1-\varepsilon_3)}{(\beta,\varepsilon_1-\varepsilon_3)}\cdot
\frac{(\Lambda+\beta,\varepsilon_1-\varepsilon_4)}{(\beta,\varepsilon_1-\varepsilon_4)}\cdot
\frac{(\Lambda+\beta,\varepsilon_2-\varepsilon_3)}{(\beta,\varepsilon_2-\varepsilon_3)}$$
$$\times\frac{(\Lambda+\beta,\varepsilon_2-\varepsilon_4)}{(\beta,\varepsilon_2-\varepsilon_4)}\cdot
\frac{(\Lambda+\beta,\varepsilon_3-\varepsilon_4)}{(\beta,\varepsilon_3-\varepsilon_4)}\cdot
\frac{(\Lambda+\beta,\varepsilon_1+\varepsilon_2)}{(\beta,\varepsilon_1+\varepsilon_2)}\cdot
\frac{(\Lambda+\beta,\varepsilon_1+\varepsilon_3)}{(\beta,\varepsilon_1+\varepsilon_3)}$$
$$\times\frac{(\Lambda+\beta,\varepsilon_1+\varepsilon_4)}{(\beta,\varepsilon_1+\varepsilon_4)}\cdot
\frac{(\Lambda+\beta,\varepsilon_2+\varepsilon_3)}{(\beta,\varepsilon_2+\varepsilon_3)}\cdot
\frac{(\Lambda+\beta,\varepsilon_2+\varepsilon_4)}{(\beta,\varepsilon_2+\varepsilon_4)}\cdot
\frac{(\Lambda+\beta,\varepsilon_3+\varepsilon_4)}{(\beta,\varepsilon_3+\varepsilon_4)}.$$
On the ground of (\ref{s1}) and (\ref{s2}),
$$\frac{(\Lambda+\beta,\varepsilon_1)}{(\beta,\varepsilon_1)}=
\frac{2\nu_1+2\nu_2+2\nu_3+\nu_4}{7},\quad
\frac{(\Lambda+\beta,\varepsilon_2)}{(\beta,\varepsilon_2)}=
\frac{2\nu_2+2\nu_3+\nu_4}{5},$$
$$\frac{(\Lambda+\beta,\varepsilon_3)}{(\beta,\varepsilon_3)}=
\frac{2\nu_3+\nu_4}{3},\quad
\frac{(\Lambda+\beta,\varepsilon_4)}{(\beta,\varepsilon_4)}=
\nu_4,\quad
\frac{(\Lambda+\beta,\varepsilon_1-\varepsilon_2)}{(\beta,\varepsilon_1-\varepsilon_2)}=
\nu_1,$$
$$\frac{(\Lambda+\beta,\varepsilon_1-\varepsilon_3)}{(\beta,\varepsilon_1-\varepsilon_3)}=
\frac{\nu_1+\nu_2}{2},\quad
\frac{(\Lambda+\beta,\varepsilon_1-\varepsilon_4)}{(\beta,\varepsilon_1-\varepsilon_4)}=
\frac{\nu_1+\nu_2+\nu_3}{3},$$
$$\frac{(\Lambda+\beta,\varepsilon_2-\varepsilon_3)}{(\beta,\varepsilon_2-\varepsilon_3)}=
\nu_2,\quad
\frac{(\Lambda+\beta,\varepsilon_2-\varepsilon_4)}{(\beta,\varepsilon_2-\varepsilon_4)}=
\frac{\nu_2+\nu_3}{2},\quad
\frac{(\Lambda+\beta,\varepsilon_3-\varepsilon_4)}{(\beta,\varepsilon_3-\varepsilon_4)}=
\nu_3,$$
$$\frac{(\Lambda+\beta,\varepsilon_1+\varepsilon_2)}{(\beta,\varepsilon_1+\varepsilon_2)}=
\frac{\nu_1+2\nu_2+2\nu_3+\nu_4}{6},\quad
\frac{(\Lambda+\beta,\varepsilon_1+\varepsilon_3)}{(\beta,\varepsilon_1+\varepsilon_3)}=
\frac{\nu_1+\nu_2+2\nu_3+\nu_4}{5},$$
$$\frac{(\Lambda+\beta,\varepsilon_1+\varepsilon_4)}{(\beta,\varepsilon_1+\varepsilon_4)}=
\frac{\nu_1+\nu_2+\nu_3+\nu_4}{4},\quad
\frac{(\Lambda+\beta,\varepsilon_2+\varepsilon_3)}{(\beta,\varepsilon_2+\varepsilon_3)}=
\frac{\nu_2+2\nu_3+\nu_4}{4},$$
$$\frac{(\Lambda+\beta,\varepsilon_2+\varepsilon_4)}{(\beta,\varepsilon_2+\varepsilon_4)}=
\frac{\nu_2+\nu_3+\nu_4}{3},\quad
\frac{(\Lambda+\beta,\varepsilon_3+\varepsilon_4)}{(\beta,\varepsilon_3+\varepsilon_4)}=
\frac{\nu_3+\nu_4}{2}.$$
Therefore
\begin{equation}
\label{b2}
d(\Lambda+\beta)=\frac{1}{3628800}\nu_1\nu_2\nu_3\nu_4(\nu_1+\nu_2)(\nu_2+\nu_3)(\nu_3+\nu_4)(2\nu_3+\nu_4)(\nu_1+\nu_2+\nu_3)
\end{equation}
$$\times(\nu_2+\nu_3+\nu_4)(\nu_2+2\nu_3+\nu_4)(2\nu_2+2\nu_3+\nu_4)
(\nu_1+\nu_2+\nu_3+\nu_4)$$
$$\times(\nu_1+2\nu_2+2\nu_3+\nu_4)(\nu_1+\nu_2+2\nu_3+\nu_4)(2\nu_1+2\nu_2+2\nu_3+\nu_4).$$

5) Simple roots and fundamental weights indicated above generate respective lattices
$\Lambda_0(B_4)$ and $\Lambda_1(B_4)$, i.e.
\begin{equation}
\label{b3}
\Lambda_0(B_4)=\left\{\sum\limits_{i=1}^{4}\Psi_i\alpha_i\,\mid\,\Psi_i\in\mathbb{Z},\,\,i=1,2,3,4\right\},
\end{equation}
\begin{equation}
\label{b30}
\Lambda_1(B_4)=\left\{\sum\limits_{i=1}^{4}\Lambda_i\bar{\omega}_i\,\mid\,\Lambda_i\in\mathbb{Z},\,\,i=1,2,3,4\right\}.
\end{equation}

After expressing roots via fundamental weights
$$\alpha_1=2\bar{\omega}_1-\bar{\omega}_2,\,\,
\alpha_2=-\bar{\omega}_1+2\bar{\omega}_2-\bar{\omega}_3,\,\,
\alpha_3=-\bar{\omega}_2+2\bar{\omega}_3-2\bar{\omega}_4,\,\,
\alpha_4=-\bar{\omega}_3+2\bar{\omega}_4$$
and the change of variables
$$\left\{
\begin{array}{l}
\Psi_1=\Omega_1+\Omega_2+\Omega_3+\Omega_4, \\
\Psi_2=\Omega_1+2\Omega_2+2\Omega_3+2\Omega_4, \\
\Psi_3=\Omega_1+2\Omega_2+3\Omega_3+3\Omega_4, \\
\Psi_4=\Omega_1+2\Omega_2+3\Omega_3+4\Omega_4
\end{array}\right.\quad\Leftrightarrow\quad
\left\{
\begin{array}{l}
\Omega_1=2\Psi_1-\Psi_2, \\
\Omega_2=-\Psi_1+2\Psi_2-\Psi_3, \\
\Omega_3=-\Psi_2+2\Psi_3-\Psi_4, \\
\Omega_4=-\Psi_3+\Psi_4
\end{array}\right.$$
the lattice $\Lambda_0(B_4)$ takes the following view
\begin{equation}
\label{b4}
\Lambda_0(B_4)=\{\Omega_1\bar{\omega}_1+\Omega_2\bar{\omega}_2+\Omega_3\bar{\omega}_3+2\Omega_4\bar{\omega}_4\,\mid\,\Omega_i\in\mathbb{Z},\,\,i=1,2,3,4\}.
\end{equation}

It follows from (\ref{b3}), (\ref{b30}), and (\ref{b4}) that $\Lambda_0(B_4)\subset\Lambda_1(B_4)$ and
$\Lambda_1(B_4)/\Lambda_0(B_4)\cong\mathbb{Z}_2$, i.e. it has prime order. Hence on the ground of proposition \ref{Col:LCalc} there is no other lattices.
Lie groups, associated with these characteristic lattices, are presented in table \ref{Tab:list}.

a) Let us give formulas defining Laplacian spectrum of the Lie group  $\mbox{Spin}(9)$.

6a) By formula (\ref{b3}), the set of highest weights of the Lie group  $\mbox{Spin}(9)$ is equal to
$$\Lambda^{+}(\mbox{Spin}(9))=\left\{\sum\limits_{i=1}^{4}\Lambda_i\bar{\omega}_i\,\mid\,\Lambda_i\in\mathbb{Z}_{+},\,\,i=1,2,3,4\right\}.$$

7a) Set $\Lambda=\sum\limits_{i=1}^{4}\Lambda_i\bar{\omega}_i\in\Lambda^{+}(\mbox{Spin}(9))$,
then by p.~4) eigenvalue $\lambda(\Lambda)$ and dimension $d(\Lambda+\beta)$
are calculated by formulas   (\ref{b1}) and (\ref{b2}) respectively, where $\nu_i=\Lambda_i+1$, $\nu_i\in\mathbb{N}$, $i=1,2,3,4.$

8a) Applying formula (\ref{abc}) and results of preceding point, we get the multiplicity of eigenvalue $\lambda(\Lambda)$:
$$\sigma(\Lambda)=\frac{1}{3628800^2}\sum_{\Xi_4}
\nu_1^2\nu_2^2\nu_3^2\nu_4^2(\nu_1+\nu_2)^2(\nu_2+\nu_3)^2(\nu_3+\nu_4)^2(2\nu_3+\nu_4)^2$$
$$\times(\nu_1+\nu_2+\nu_3)^2(\nu_2+\nu_3+\nu_4)^2(\nu_2+2\nu_3+\nu_4)^2(2\nu_2+2\nu_3+\nu_4)^2$$
$$\times(\nu_1+\nu_2+\nu_3+\nu_4)^2(\nu_1+2\nu_2+2\nu_3+\nu_4)^2(\nu_1+\nu_2+2\nu_3+\nu_4)^2(2\nu_1+2\nu_2+2\nu_3+\nu_4)^2,$$
where
\begin{equation}
\label{xi4}
\begin{array}{r l l}
&&\Xi_1=\{(2\nu_1+2\nu_2+2\nu_3+\nu_4)^2+(2\nu_2+2\nu_3+\nu_4)^2+(2\nu_3+\nu_4)^2+\\
&&+\nu_4^2=84-56\gamma\lambda\,\mid\,\nu_i\in\mathbb{N},\,i=1,2,3,4\}.
\end{array}
\end{equation}

The least by modulus non-zero eigenvalue of Laplacian is equal to $-\frac{4}{7\gamma}$ and corresponds to irreducible complex representation of the Lie group
 $\mbox{Spin}(9)$ with highest weight
$\bar{\omega}_1$. The dimension of this representation is equal to $9$. Therefore the multiplicity of the eigenvalue $-\frac{4}{7\gamma}$ is equal to $9^2=81$.

b) Let us give formulas defining Laplacian spectrum of the Lie group $\mbox{SO}(9)$.

6b) By formula (\ref{b4}),  the set of highest weights of the Lie group  $\mbox{SO}(9)$ is equal to
$$\Lambda^{+}(\mbox{SO}(9))=\{\Omega_1\bar{\omega}_1+\Omega_2\bar{\omega}_2+\Omega_3\bar{\omega}_3+2\Omega_4\bar{\omega}_4\,\,\mid\,
\Omega_i\in\mathbb{Z}_{+},\,\,i=1,2,3,4\}.$$

7b) Set $\Lambda=\Omega_1\bar{\omega}_1+\Omega_2\bar{\omega}_2+\Omega_3\bar{\omega}_3+2\Omega_4\bar{\omega}_4\in\Lambda^{+}(\mbox{SO}(9))$,
then by p.~4) eigenvalue $\lambda(\Lambda)$ and dimension $d(\Lambda+\beta)$ are calculated by formulas (\ref{b1}) and (\ref{b2}) respectively, where
$\nu_1=\Omega_1+1$, $\nu_2=\Omega_2+1$, $\nu_3=\Omega_3+1$, $\nu_4=2\Omega_4+1$, $\nu_i\in\mathbb{N}$, $i=1,2,3,4,$ $\nu_4\equiv 1({\rm mod}\,2)$.

8b) Applying formula (\ref{abc}) and results of preceding point, we get the following multiplicity of the eigenvalue $\lambda(\Lambda)$:
$$\sigma(\Lambda)=\frac{1}{3628800^2}\sum_{\Xi_5}
\nu_1^2\nu_2^2\nu_3^2\nu_4^2(\nu_1+\nu_2)^2(\nu_2+\nu_3)^2(\nu_3+\nu_4)^2(2\nu_3+\nu_4)^2$$
$$\times(\nu_1+\nu_2+\nu_3)^2(\nu_2+\nu_3+\nu_4)^2(\nu_2+2\nu_3+\nu_4)^2(2\nu_2+2\nu_3+\nu_4)^2$$
$$\times(\nu_1+\nu_2+\nu_3+\nu_4)^2(\nu_1+2\nu_2+2\nu_3+\nu_4)^2(\nu_1+\nu_2+2\nu_3+\nu_4)^2(2\nu_1+2\nu_2+2\nu_3+\nu_4)^2,$$
where
\begin{equation}
\label{xi5}
\begin{array}{r c l}
&&\Xi_2=\{(2\nu_1+2\nu_2+2\nu_3+\nu_4)^2+(2\nu_2+2\nu_3+\nu_4)^2+(2\nu_3+\nu_4)^2+\\
&&+\nu_4^2=84-56\gamma\lambda\,\mid\,\nu_4\equiv 1({\rm mod}\,2),\,\,\nu_i\in\mathbb{N},\,\,i=1,2,3,4\}.
\end{array}
\end{equation}

The least by modulus non-zero eigenvalue of Laplacian is equal to $-\frac{4}{7\gamma}$ and corresponds to irreducible complex representation of the Lie group
$\mbox{SO}(9)$  with highest weight
$\bar{\omega}_1$. The dimension of this representation is equal to $9$. Consequently the multiplicity of the eigenvalue $-\frac{4}{7\gamma}$ is equal to $9^2=81$.

\section{Calculation of Laplacian spectrum for Lie groups $\mbox{Sp}(4)$ and $\mbox{Sp}(4)/\mbox{C}(\mbox{Sp}(4))$}

By table \ref{Tab:list}, the Lie groups under considerations correspond to root system $C_4$.
We apply table~III from \cite{Burb}.
Simple roots are
$\alpha_1=\varepsilon_1-\varepsilon_2,\,\,\alpha_2=\varepsilon_2-\varepsilon_3,\,\,\alpha_3=\varepsilon_3-\varepsilon_4,\,\,\alpha_4=2\varepsilon_4$;
maximal root is $\tilde{\alpha}=2\varepsilon_1$.
Positive roots are $2\varepsilon_i$, $i=1,2,3,4$; $\varepsilon_i\pm\varepsilon_j$, $1\leq i<j\leq 4$.
The sum of positive roots is equal to
$2\beta=8\varepsilon_1+6\varepsilon_2+4\varepsilon_3+2\varepsilon_4$, whence
\begin{equation}
\label{ss1}
\beta=4\varepsilon_1+3\varepsilon_2+2\varepsilon_3+\varepsilon_4,\quad \tilde{\alpha}+\beta=6\varepsilon_1+3\varepsilon_2+2\varepsilon_3+\varepsilon_4.
\end{equation}

We act according to algorithm presented in theorem \ref{alg}.

1) $b=\langle\tilde{\alpha}+\beta,\tilde{\alpha}+\beta\rangle-\langle\beta,\beta\rangle=50-30=20.$

2) $\left(\cdot,\cdot\right)=\frac{1}{20}\langle\cdot,\cdot\rangle.$

3) Fundamental weights have the following form
$$\bar{\omega}_1=\varepsilon_1,\quad
\bar{\omega}_2=\varepsilon_1+\varepsilon_2,\quad
\bar{\omega}_3=\varepsilon_1+\varepsilon_2+\varepsilon_3,\quad
\bar{\omega}_4=\varepsilon_1+\varepsilon_2+\varepsilon_3+\varepsilon_4.$$
It is easy to see that
$$\tilde{\alpha}=2\bar{\omega}_1,\quad\beta=\bar{\omega}_1+\bar{\omega}_2+\bar{\omega}_3+\bar{\omega}_4.$$

4) Set
$\Lambda=\sum\limits_{i=1}^{4}\Lambda_i\bar{\omega}_i$, where $\Lambda_i\in\mathbb{Z}_{+}$, $i=1,2,3,4$. Then
\begin{equation}
\label{ss2}
\Lambda+\beta=\sum_{i=1}^{4}(\Lambda_i+1)\bar{\omega}_i=
(\nu_1+\nu_2+\nu_3+\nu_4)\varepsilon_1+
(\nu_2+\nu_3+\nu_4)\varepsilon_2+
(\nu_3+\nu_4)\varepsilon_3+\nu_4\varepsilon_4,
\end{equation}
where
$\nu_i=\Lambda_i+1,\,\,\nu_i\in\mathbb{N},\,\,i=1,2,3,4.$

By formula (\ref{lam}), eigenvalue $\lambda(\Lambda)$, corresponding to highest weight
$\Lambda$, is equal to
$$\lambda(\Lambda)=-\frac{1}{20\gamma}[\langle\Lambda+\beta,\Lambda+\beta\rangle-\langle\beta,\beta\rangle]=$$
\begin{equation}
\label{c1}
=-\frac{1}{20\gamma}\left[(\nu_1+\nu_2+\nu_3+\nu_4)^2+(\nu_2+\nu_3+\nu_4)^2+(\nu_3+\nu_4)^2+\nu_4^2-30\right].
\end{equation}

By formula (\ref{dim}), dimension $d(\Lambda+\beta)$ of representation
$\rho(\Lambda)$, associated with highest weight $\Lambda$, is equal to
$$d(\Lambda+\beta)=\frac{(\Lambda+\beta,2\varepsilon_1)}{(\beta,2\varepsilon_1)}\cdot\frac{(\Lambda+\beta,2\varepsilon_2)}{(\beta,2\varepsilon_2)}\cdot
\frac{(\Lambda+\beta,2\varepsilon_3)}{(\beta,2\varepsilon_3)}
\cdot\frac{(\Lambda+\beta,2\varepsilon_4)}{(\beta,2\varepsilon_4)}$$
$$\times\frac{(\Lambda+\beta,\varepsilon_1-\varepsilon_2)}{(\beta,\varepsilon_1-\varepsilon_2)}
\cdot\frac{(\Lambda+\beta,\varepsilon_1-\varepsilon_3)}{(\beta,\varepsilon_1-\varepsilon_3)}\cdot
\frac{(\Lambda+\beta,\varepsilon_1-\varepsilon_4)}{(\beta,\varepsilon_1-\varepsilon_4)}\cdot
\frac{(\Lambda+\beta,\varepsilon_2-\varepsilon_3)}{(\beta,\varepsilon_2-\varepsilon_3)}$$
$$\times\frac{(\Lambda+\beta,\varepsilon_2-\varepsilon_4)}{(\beta,\varepsilon_2-\varepsilon_4)}\cdot
\frac{(\Lambda+\beta,\varepsilon_3-\varepsilon_4)}{(\beta,\varepsilon_3-\varepsilon_4)}\cdot
\frac{(\Lambda+\beta,\varepsilon_1+\varepsilon_2)}{(\beta,\varepsilon_1+\varepsilon_2)}\cdot
\frac{(\Lambda+\beta,\varepsilon_1+\varepsilon_3)}{(\beta,\varepsilon_1+\varepsilon_3)}$$
$$\times\frac{(\Lambda+\beta,\varepsilon_1+\epsilon_4)}{(\beta,\varepsilon_1+\varepsilon_4)}\cdot
\frac{(\Lambda+\beta,\varepsilon_2+\varepsilon_3)}{(\beta,\varepsilon_2+\varepsilon_3)}\cdot
\frac{(\Lambda+\beta,\varepsilon_2+\varepsilon_4)}{(\beta,\varepsilon_2+\varepsilon_4)}\cdot
\frac{(\Lambda+\beta,\varepsilon_3+\varepsilon_4)}{(\beta,\varepsilon_3+\varepsilon_4)}.$$
On the ground of  (\ref{ss1}) and (\ref{ss2}),
$$\frac{(\Lambda+\beta,2\varepsilon_1)}{(\beta,2\varepsilon_1)}=
\frac{\nu_1+\nu_2+\nu_3+\nu_4}{4},\quad
\frac{(\Lambda+\beta,2\varepsilon_2)}{(\beta,2\varepsilon_2)}=
\frac{\nu_2+\nu_3+\nu_4}{3},$$
$$\frac{(\Lambda+\beta,2\varepsilon_3)}{(\beta,2\varepsilon_3)}=
\frac{\nu_3+\nu_4}{2},\quad
\frac{(\Lambda+\beta,2\varepsilon_4)}{(\beta,2\varepsilon_4)}=
\nu_4,\quad
\frac{(\Lambda+\beta,\varepsilon_1-\varepsilon_2)}{(\beta,\varepsilon_1-\varepsilon_2)}=
\nu_1,$$
$$\frac{(\Lambda+\beta,\varepsilon_1-\varepsilon_3)}{(\beta,\varepsilon_1-\varepsilon_3)}=
\frac{\nu_1+\nu_2}{2},\quad
\frac{(\Lambda+\beta,\varepsilon_1-\varepsilon_4)}{(\beta,\varepsilon_1-\varepsilon_4)}=
\frac{\nu_1+\nu_2+\nu_3}{3},$$
$$\frac{(\Lambda+\beta,\varepsilon_2-\varepsilon_3)}{(\beta,\varepsilon_2-\varepsilon_3)}=
\nu_2,\quad
\frac{(\Lambda+\beta,\varepsilon_2-\varepsilon_4)}{(\beta,\varepsilon_2-\varepsilon_4)}=
\frac{\nu_2+\nu_3}{2},\quad
\frac{(\Lambda+\beta,\varepsilon_3-\varepsilon_4)}{(\beta,\varepsilon_3-\varepsilon_4)}=
\nu_3,$$
$$\frac{(\Lambda+\beta,\varepsilon_1+\varepsilon_2)}{(\beta,\varepsilon_1+\varepsilon_2)}=
\frac{\nu_1+2\nu_2+2\nu_3+2\nu_4}{7},\quad
\frac{(\Lambda+\beta,\varepsilon_1+\varepsilon_3)}{(\beta,\varepsilon_1+\varepsilon_3)}=
\frac{\nu_1+\nu_2+2\nu_3+2\nu_4}{6},$$
$$\frac{(\Lambda+\beta,\varepsilon_1+\varepsilon_4)}{(\beta,\varepsilon_1+\varepsilon_4)}=
\frac{\nu_1+\nu_2+\nu_3+2\nu_4}{5},\quad
\frac{(\Lambda+\beta,\varepsilon_2+\varepsilon_3)}{(\beta,\varepsilon_2+\varepsilon_3)}=
\frac{\nu_2+2\nu_3+2\nu_4}{5},$$
$$\frac{(\Lambda+\beta,\varepsilon_2+\varepsilon_4)}{(\beta,\varepsilon_2+\varepsilon_4)}=
\frac{\nu_2+\nu_3+2\nu_4}{4},\quad
\frac{(\Lambda+\beta,\varepsilon_3+\varepsilon_4)}{(\beta,\varepsilon_3+\varepsilon_4)}=
\frac{\nu_3+2\nu_4}{3}.$$
Consequently
\begin{equation}
\label{c2}
d(\Lambda+\beta)=\frac{1}{3628800}\nu_1\nu_2\nu_3\nu_4(\nu_1+\nu_2)(\nu_2+\nu_3)(\nu_3+\nu_4)(\nu_3+2\nu_4)
\end{equation}
$$\times(\nu_1+\nu_2+\nu_3)(\nu_2+\nu_3+\nu_4)(\nu_2+\nu_3+2\nu_4)(\nu_2+2\nu_3+2\nu_4)
(\nu_1+\nu_2+\nu_3+\nu_4)$$
$$\times(\nu_1+\nu_2+2\nu_3+2\nu_4)(\nu_1+\nu_2+\nu_3+2\nu_4)(\nu_1+2\nu_2+2\nu_3+2\nu_4).$$

5) Simple roots and fundamental weights of the Lie algebra $\mathfrak{sp}(4)$ indicated above
define respective lattices $\Lambda_0(C_4)$ and $\Lambda_1(C_4)$, i.e.\begin{equation}
\label{c3}
\Lambda_0(C_4)=\left\{\sum\limits_{i=1}^{4}\Psi_i\alpha_i\,\mid\,\Psi_i\in\mathbb{Z},\,\,i=1,2,3,4\right\},
\end{equation}
$$\Lambda_1(C_4)=\left\{\sum\limits_{i=1}^{4}\Lambda_i\bar{\omega}_i\,\mid\,\Lambda_i\in\mathbb{Z},\,\,i=1,2,3,4\right\}.$$

After expressing roots via fundamental weights
$$\alpha_1=2\bar{\omega}_1-\bar{\omega}_2,\,\,\alpha_2=-\bar{\omega}_1+2\bar{\omega}_2-\bar{\omega}_3,\,\,
\alpha_3=-\bar{\omega}_2+2\bar{\omega}_3-\bar{\omega}_4,\,\,\alpha_4=-2\bar{\omega}_3+2\bar{\omega}_4$$
and the change of variables
$$\left\{
\begin{array}{l}
\Psi_1=-2\Omega_1+\Omega_2+2\Omega_4, \\
\Psi_2=2\Omega_1+2\Omega_2+3\Omega_3+2\Omega_4, \\
\Psi_3=-2\Omega_1+2\Omega_2+2\Omega_3+3\Omega_4, \\
\Psi_4=-\Omega_1+\Omega_2+\Omega_3+2\Omega_4
\end{array}\right.\quad\Leftrightarrow\quad
\left\{
\begin{array}{l}
\Omega_1=\Psi_1-\Psi_3+\Psi_4, \\
\Omega_2=-\Psi_1+2\Psi_2-\Psi_3, \\
\Omega_3=-\Psi_2+2\Psi_3-2\Psi_4, \\
\Omega_4=-\Psi_3+2\Psi_4
\end{array}\right.$$
the lattice $\Lambda_0(C_4)$ takes the following form
\begin{equation}
\label{c4}
\Lambda_0(C_4)=\{2\Omega_1\bar{\omega}_1+\Omega_2\bar{\omega}_2+\Omega_3(\bar{\omega}_1+\bar{\omega}_3)+\Omega_4\bar{\omega}_4\,\,\mid\,
\Omega_i\in\mathbb{Z},\,\,i=1,2,3,4\}.
\end{equation}
Let us define lattice  $\Lambda_1(C_4)$ in base $\{\bar{\omega}_1,\bar{\omega}_2,\bar{\omega}_1+\bar{\omega}_3,\bar{\omega}_4\}$ via the change of variables
$\{\Lambda_1=\Omega_1+\Omega_3,\,\Lambda_2=\Omega_2,\,\Lambda_3=\Omega_3,\,\Lambda_4=\Omega_4\}$:
\begin{equation}
\label{c5}
\Lambda_1(C_4)=\{\Omega_1\bar{\omega}_1+\Omega_2\bar{\omega}_2+\Omega_3(\bar{\omega}_1+\bar{\omega}_3)+\Omega_4\bar{\omega}_4\,\,\mid\,
\Omega_i\in\mathbb{Z},\,\,i=1,2,3,4\}.
\end{equation}

It follows from (\ref{c4}) and (\ref{c5}) that $\Lambda_0(C_4)\subset\Lambda_1(C_4)$ and
$\Lambda_1(C_4)/\Lambda_0(C_4)\cong\mathbb{Z}_2$, i.e. it has prime order. It follows from here and proposition \ref{Col:LCalc}
that there is no other lattices. Lie groups, associated with these characteristic lattices, are presented in table  \ref{Tab:list}.

a) Let us give formulas, defining Laplacian spectrum of the Lie group ${\rm Sp}(4)$.

6a) By formula (\ref{c3}), the set of highest weights of the Lie group  $\mbox{Sp}(4)$ is equal to
$$\Lambda^{+}(\mbox{Sp}(4))=\left\{\sum\limits_{i=1}^{4}\Lambda_i\bar{\omega}_i\,\mid\,\Lambda_i\in\mathbb{Z}_{+},\,\,i=1,2,3,4\right\}.$$

7a) Set $\Lambda=\sum\limits_{i=1}^{4}\Lambda_i\bar{\omega}_i\in\Lambda^{+}({\rm Sp}(4))$,
then by p.~4) eigenvalue  $\lambda(\Lambda)$ and dimension $d(\Lambda+\beta)$
are calculated respectively by formulas (\ref{c1}) and (\ref{c2}), where $\nu_i=\Lambda_i+1$, $\nu_i\in\mathbb{N}$, $i=1,2,3,4.$

8a) Applying formula (\ref{abc}) and result of preceding point, we get the following multiplicity of eigenvalue $\lambda$
$$\sigma(\Lambda)=\frac{1}{3628800^2}\sum_{\Xi_6}\nu_1^2\nu_2^2\nu_3^2\nu_4^2(\nu_1+\nu_2)^2(\nu_2+\nu_3)^2(\nu_3+\nu_4)^2(\nu_3+2\nu_4)^2$$
$$\times(\nu_1+\nu_2+\nu_3)^2(\nu_2+\nu_3+\nu_4)^2(\nu_2+\nu_3+2\nu_4)^2(\nu_2+2\nu_3+2\nu_4)^2$$
$$\times(\nu_1+\nu_2+\nu_3+\nu_4)^2(\nu_1+\nu_2+2\nu_3+2\nu_4)^2(\nu_1+\nu_2+\nu_3+2\nu_4)^2(\nu_1+2\nu_2+2\nu_3+2\nu_4)^2,$$
where
\begin{equation}
\label{xi6}
\begin{array}{r c l}
&&\Xi_3=\{(\nu_1+\nu_2+\nu_3+\nu_4)^2+(\nu_2+\nu_3+\nu_4)^2+(\nu_3+\nu_4)^2+\\
&&+\nu_4^2=30-20\gamma\lambda\,\mid\,\nu_i\in\mathbb{N},\,\,i=1,2,3,4\}.
\end{array}
\end{equation}

The least by modulus non-zero eigenvalue of Laplacian is equal to $-\frac{9}{20\gamma}$ and corresponds to irreducible complex representation
of the Lie group ${\rm Sp}(4)$
with highest weight $\bar{\omega}_1$. The dimension of this representation is equal to $8$. Therefore the multiplicity of eigenvalue
$-\frac{9}{20\gamma}$ is equal to $8^2=64$.

b) Let present formulas giving Laplacian spectrum of the Lie group
 $\mbox{Sp}(4)/\mbox{C}(\mbox{Sp}(4))$.

6b) By formula (\ref{c4}), the  set of highest weights of the Lie group $\mbox{Sp}(4)/\mbox{C}(\mbox{Sp}(4))$ is equal to
$$\begin{array}{r c l}
&&\Lambda^{+}(\mbox{Sp}(4)/\mbox{C}(\mbox{Sp}(4)))=
\{(2\Omega_1+\Omega_3)\bar{\omega}_1+\Omega_2\bar{\omega}_2+\Omega_3\bar{\omega}_3+\Omega_4\bar{\omega}_4\mid\\
&&\Omega_i\in\mathbb{Z},\,i=1,2,3,4;\,2\Omega_1+\Omega_3\geq 0,\,\Omega_2\geq 0,\,\Omega_3\geq 0,\,\Omega_4\geq 0\}.
\end{array}$$

7b) Set $\Lambda=(2\Omega_1+\Omega_3)\bar{\omega}_1+\Omega_2\bar{\omega}_2+\Omega_3\bar{\omega}_3+\Omega_4
\bar{\omega}_4\in\Lambda^{+}(\mbox{Sp}(4)/\mbox{C}(\mbox{Sp}(4)))$, then by p.~4) eigenvalue
$\lambda(\Lambda)$ and dimension
$d(\Lambda+\beta)$ are calculated respectively by formulas (\ref{c1}) and (\ref{c2}), where
$\nu_1=2\Omega_1+\Omega_3+1$, $\nu_2=\Omega_2+1$, $\nu_3=\Omega_3+1$, $\nu_4=\Omega_4+1$, $\nu_i\in\mathbb{N}$, $i=1,2,3,4;$ $\nu_1\equiv \nu_3(\mbox{mod}\,2)$.

8b) Applying formula (\ref{abc}) and results of preceding point, we get the following multiplicity of eigenvalue $\lambda(\Lambda)$
$$\sigma(\Lambda)=\frac{1}{3628800^2}\sum_{\Xi_7}\nu_1^2\nu_2^2\nu_3^2\nu_4^2(\nu_1+\nu_2)^2(\nu_2+\nu_3)^2(\nu_3+\nu_4)^2(\nu_3+2\nu_4)^2$$
$$\times(\nu_1+\nu_2+\nu_3)^2(\nu_2+\nu_3+\nu_4)^2(\nu_2+\nu_3+2\nu_4)^2(\nu_2+2\nu_3+2\nu_4)^2$$
$$\times(\nu_1+\nu_2+\nu_3+\nu_4)^2(\nu_1+\nu_2+2\nu_3+2\nu_4)^2(\nu_1+\nu_2+\nu_3+2\nu_4)^2(\nu_1+2\nu_2+2\nu_3+2\nu_4)^2,$$
where
\begin{equation}
\label{xi7}
\begin{array}{r c l}
&&\Xi_4=\{(\nu_1+\nu_2+\nu_3+\nu_4)^2+(\nu_2+\nu_3+\nu_4)^2+(\nu_3+\nu_4)^2+\\
&&+\nu_4^2=30-20\gamma\lambda\,\mid\,\nu_1\equiv\nu_3(\mbox{mod}\,2),\,\,\nu_i\in\mathbb{N},\,\,i=1,2,3,4\}.
\end{array}
\end{equation}

The least by modulus non-zero eigenvalue of Laplacian is equal to $-\frac{4}{5\gamma}$ and corresponds to irreducible complex representation
 of the Lie group $\mbox{Sp}(4)/\mbox{C}(\mbox{Sp}(4))$  with highest weight $\bar{\omega}_2$. The dimension of this representation is equal to $27$.
 Consequently the multiplicity of the eigenvalue $-\frac{4}{5\gamma}$ is equal to $27^2=729$.

\section{Calculation of Laplacian spectrum for Lie groups $\mbox{Spin}(8)$, $\mbox{SO}(8)$, and $\mbox{PSO}(8)$}

By table  \ref{Tab:list}, the Lie groups under considerations correspond to root system $D_4$.
We apply table~IV from \cite{Burb}.
Simple roots are $\alpha_1=\varepsilon_1-\varepsilon_2,\,\,\alpha_2=\varepsilon_2-\varepsilon_3,\,\,\alpha_3=\varepsilon_3-\varepsilon_4,\,\,
\alpha_4=\varepsilon_3+\varepsilon_4$; maximal root is $\tilde{\alpha}=\varepsilon_1+\varepsilon_2$.
Positive roots are $\varepsilon_i\pm\varepsilon_j$, $1\leq i<j\leq 4$.
The sum of positive roots is equal to
$2\beta=6\varepsilon_1+4\varepsilon_2+2\varepsilon_3$, whence
\begin{equation}
\label{m1}
\beta=3\varepsilon_1+2\varepsilon_2+\varepsilon_3,\quad \tilde{\alpha}+\beta=4\varepsilon_1+3\varepsilon_2+\varepsilon_3.
\end{equation}

We act according to algorithm presented in theorem \ref{alg}.

1) $b=\langle\tilde{\alpha}+\beta,\tilde{\alpha}+\beta\rangle-\langle\beta,\beta\rangle=26-14=12.$

2) $\left(\cdot,\cdot\right)=\frac{1}{12}\langle\cdot,\cdot\rangle.$

3) Fundamental weights have the following form
$$\bar{\omega}_1=\varepsilon_1,\quad
\bar{\omega}_2=\varepsilon_1+\varepsilon_2,\quad
\bar{\omega}_3=\frac{\varepsilon_1+\varepsilon_2+\varepsilon_3-\varepsilon_4}{2},\quad
\bar{\omega}_4=\frac{\varepsilon_1+\varepsilon_2+\varepsilon_3+\varepsilon_4}{2}.$$
It is easy to see that
$$\tilde{\alpha}=\bar{\omega}_2,\quad\beta=\bar{\omega}_1+\bar{\omega}_2+\bar{\omega}_3+\bar{\omega}_4.$$

4) Set
$\Lambda=\sum\limits_{i=1}^{4}\Lambda_i\bar{\omega}_i$, where $\Lambda_i\in\mathbb{Z}_{+}$, $i=1,2,3,4$. Then
\begin{equation}
\label{m2}
\Lambda+\beta=\sum_{i=1}^{4}(\Lambda_i+1)\bar{\omega}_i=
\sum_{i=1}^{4}\nu_i\bar{\omega}_i=
\end{equation}
$$=\frac{1}{2}
[(2\nu_1+2\nu_2+\nu_3+\nu_4)\varepsilon_1+
(2\nu_2+\nu_3+\nu_4)\varepsilon_2+
(\nu_3+\nu_4)\varepsilon_3+(-\nu_3+\nu_4)\varepsilon_4],$$
where
$\nu_i=\Lambda_i+1,\,\,\nu_i\in\mathbb{N},\,\,i=1,2,3,4.$

By formula (\ref{lam}), eigenvalue $\lambda(\Lambda)$, corresponding to highest weight
$\Lambda$, is equal to
\begin{equation}
\label{d1}
\lambda(\Lambda)=-\frac{1}{12\gamma}[\langle\Lambda+\beta,\Lambda+\beta\rangle-\langle\beta,\beta\rangle]=
\end{equation}
$$=-\frac{1}{48\gamma}\left[(2\nu_1+2\nu_2+\nu_3+\nu_4)^2+(2\nu_2+\nu_3+\nu_4)^2+2\nu_3^2+2\nu_4^2-56\right].$$

Calculation by formula (\ref{dim}) of dimension $d(\Lambda+\beta)$ of representation
$\rho(\Lambda)$ corresponding to highest weight $\Lambda$ gives
$$d(\Lambda+\beta)=\frac{(\Lambda+\beta,\varepsilon_1-\varepsilon_2)}{(\beta,\varepsilon_1-\varepsilon_2)}
\cdot\frac{(\Lambda+\beta,\varepsilon_1-\varepsilon_3)}{(\beta,\varepsilon_1-\varepsilon_3)}\cdot
\frac{(\Lambda+\beta,\varepsilon_1-\varepsilon_4)}{(\beta,\varepsilon_1-\varepsilon_4)}\cdot
\frac{(\Lambda+\beta,\varepsilon_2-\varepsilon_3)}{(\beta,\varepsilon_2-\varepsilon_3)}$$
$$\times\frac{(\Lambda+\beta,\varepsilon_2-\varepsilon_4)}{(\beta,\varepsilon_2-\varepsilon_4)}\cdot
\frac{(\Lambda+\beta,\varepsilon_3-\varepsilon_4)}{(\beta,\varepsilon_3-\varepsilon_4)}\cdot
\frac{(\Lambda+\beta,\varepsilon_1+\varepsilon_2)}{(\beta,\varepsilon_1+\varepsilon_2)}\cdot
\frac{(\Lambda+\beta,\varepsilon_1+\varepsilon_3)}{(\beta,\varepsilon_1+\varepsilon_3)}$$
$$\times\frac{(\Lambda+\beta,\varepsilon_1+\varepsilon_4)}{(\beta,\varepsilon_1+\varepsilon_4)}\cdot
\frac{(\Lambda+\beta,\varepsilon_2+\varepsilon_3)}{(\beta,\varepsilon_2+\varepsilon_3)}\cdot
\frac{(\Lambda+\beta,\varepsilon_2+\varepsilon_4)}{(\beta,\varepsilon_2+\varepsilon_4)}\cdot
\frac{(\Lambda+\beta,\varepsilon_3+\varepsilon_4)}{(\beta,\varepsilon_3+\varepsilon_4)}.$$
On the ground of (\ref{m1}) and (\ref{m2}),
$$\frac{(\Lambda+\beta,\varepsilon_1-\varepsilon_2)}{(\beta,\varepsilon_1-\varepsilon_2)}=
\nu_1,\quad
\frac{(\Lambda+\beta,\varepsilon_1-\varepsilon_3)}{(\beta,\varepsilon_1-\varepsilon_3)}=
\frac{\nu_1+\nu_2}{2},$$
$$\frac{(\Lambda+\beta,\varepsilon_1-\varepsilon_4)}{(\beta,\varepsilon_1-\varepsilon_4)}=
\frac{\nu_1+\nu_2+\nu_3}{3},\quad
\frac{(\Lambda+\beta,\varepsilon_2-\varepsilon_3)}{(\beta,\varepsilon_2-\varepsilon_3)}=
\nu_2,$$
$$\frac{(\Lambda+\beta,\varepsilon_2-\varepsilon_4)}{(\beta,\varepsilon_2-\varepsilon_4)}=
\frac{\nu_2+\nu_3}{2},\quad
\frac{(\Lambda+\beta,\varepsilon_3-\varepsilon_4)}{(\beta,\varepsilon_3-\varepsilon_4)}=
\nu_3,$$
$$\frac{(\Lambda+\beta,\varepsilon_1+\varepsilon_2)}{(\beta,\varepsilon_1+\varepsilon_2)}=
\frac{\nu_1+2\nu_2+\nu_3+\nu_4}{5},\quad
\frac{(\Lambda+\beta,\varepsilon_1+\varepsilon_3)}{(\beta,\varepsilon_1+\varepsilon_3)}=
\frac{\nu_1+\nu_2+\nu_3+\nu_4}{4},$$
$$\frac{(\Lambda+\beta,\varepsilon_1+\varepsilon_4)}{(\beta,\varepsilon_1+\varepsilon_4)}=
\frac{\nu_1+\nu_2+\nu_4}{3},\quad
\frac{(\Lambda+\beta,\varepsilon_2+\varepsilon_3)}{(\beta,\varepsilon_2+\varepsilon_3)}=
\frac{\nu_2+\nu_3+\nu_4}{3},$$
$$\frac{(\Lambda+\beta,\varepsilon_2+\varepsilon_4)}{(\beta,\varepsilon_2+\varepsilon_4)}=
\frac{\nu_2+\nu_4}{2},\quad
\frac{(\Lambda+\beta,\varepsilon_3+\varepsilon_4)}{(\beta,\varepsilon_3+\varepsilon_4)}=
\nu_4.$$
Therefore
\begin{equation}
\label{d2}
d(\Lambda+\beta)=\frac{1}{4320}\nu_1\nu_2\nu_3\nu_4(\nu_1+\nu_2)(\nu_2+\nu_3)(\nu_2+\nu_4)(\nu_1+\nu_2+\nu_3)\times
\end{equation}
$$\times(\nu_1+\nu_2+\nu_4)(\nu_2+\nu_3+\nu_4)
(\nu_1+\nu_2+\nu_3+\nu_4)(\nu_1+2\nu_2+\nu_3+\nu_4).$$

5) Simple roots and fundamental weights of the Lie algebra $\mathfrak{so}(8)$ indicated above generate respective lattices $\Lambda_0(D_4)$
and $\Lambda_1(D_4)$, i.e.
\begin{equation}
\label{d3}
\Lambda_0(D_4)=\left\{\sum\limits_{i=1}^{4}\Psi_i\alpha_i\,\mid\,\Psi_i\in\mathbb{Z},\,\,i=1,2,3,4\right\},
\end{equation}
$$\Lambda_1(D_4)=\left\{\sum\limits_{i=1}^{4}\Lambda_i\bar{\omega}_i\,\mid\,\Lambda_i\in\mathbb{Z},\,\,i=1,2,3,4\right\}.$$

After expressing roots via fundamental weights
$$\alpha_1=2\bar{\omega}_1-\bar{\omega}_2,\,\,\alpha_2=-\bar{\omega}_1+2\bar{\omega}_2-\bar{\omega}_3-\bar{\omega}_4,\,\,
\alpha_3=-\bar{\omega}_2+2\bar{\omega}_3,\,\,\alpha_4=-\bar{\omega}_2+2\bar{\omega}_4$$
and the change of variables
$$\left\{
\begin{array}{l}
\Psi_1=\Omega_2+\Omega_3+\Omega_4, \\
\Psi_2=-\Omega_1+2\Omega_2+2\Omega_3+2\Omega_4, \\
\Psi_3=-\Omega_1+\Omega_2+2\Omega_3+\Omega_4, \\
\Psi_4=-\Omega_1+\Omega_2+\Omega_3+2\Omega_4
\end{array}\right.\quad\Leftrightarrow\quad
\left\{
\begin{array}{l}
\Omega_1=2\Psi_1-\Psi_2, \\
\Omega_2=-\Psi_1+2\Psi_2-\Psi_3-\Psi_4, \\
\Omega_3=\Psi_1-\Psi_2+\Psi_3, \\
\Omega_4=\Psi_1-\Psi_2+\Psi_4
\end{array}\right.$$
the lattice $\Lambda_0(D_4)$ takes the following view
\begin{equation}
\label{d4}
\Lambda_0(D_4)=\{\Omega_1(\bar{\omega}_1-\bar{\omega}_3-\bar{\omega}_4)+\Omega_2\bar{\omega}_2+2\Omega_3\bar{\omega}_3+
2\Omega_4\bar{\omega}_4\,\,\mid\,\Omega_i\in\mathbb{Z},\,\,i=1,2,3,4\}.
\end{equation}
Also under the change of variables $\{\Lambda_1=\Omega_1,\,\Lambda_2=\Omega_2,\,\Lambda_3=\Omega_3-\Omega_1,\,\Lambda_4=\Omega_4-\Omega_1\}$
the lattice $\Lambda_1(D_4)$ in base  $\{\bar{\omega}_1-\bar{\omega}_3-\bar{\omega}_4,\bar{\omega}_2,\bar{\omega}_3,\bar{\omega}_4\}$
takes the following form
\begin{equation}
\label{d5}
\Lambda_1(D_4)=\{\Omega_1(\bar{\omega}_1-\bar{\omega}_3-\bar{\omega}_4)+\Omega_2\bar{\omega}_2+\Omega_3\bar{\omega}_3+
\Omega_4\bar{\omega}_4\,\,\mid\,\Omega_i\in\mathbb{Z},\,\,i=1,2,3,4\}.
\end{equation}

It follows from  (\ref{d4}) and (\ref{d5}) that $\Lambda_0(D_4)\subset\Lambda_1(D_4)$ and $\Lambda_1(D_4)/\Lambda_0(D_4)\cong\mathbb{Z}_2\oplus\mathbb{Z}_2$.
Therefore there exist five lattices  $\Lambda$ satisfying the relation $\Lambda_0(D_4)\subseteq\Lambda\subseteq\Lambda_1(D_4)$, namely,
$\Lambda_0(D_4)$, $\Lambda_{1/4}(D_4)$, $\Lambda_{1/2}(D_4)$, $\Lambda_{3/4}(D_4)$, $\Lambda_1(D_4)$, where
$$\Lambda_{1/4}(D_4)=\{\Omega_1(\bar{\omega}_1-\bar{\omega}_3-\bar{\omega}_4)+\Omega_2\bar{\omega}_2+2\Omega_3\bar{\omega}_3+
\Omega_4\bar{\omega}_4\,\,\mid\,\Omega_i\in\mathbb{Z},\,\,i=1,2,3,4\};$$
$$\begin{array}{r c l}
&&\Lambda_{1/2}(D_4)=\{\Omega_1(\bar{\omega}_1-\bar{\omega}_3-\bar{\omega}_4)+\Omega_2\bar{\omega}_2+\Omega_3\bar{\omega}_3+
\Omega_4\bar{\omega}_4\,\,\mid\\
&&\Omega_3\equiv\Omega_4(\mbox{mod}\,2),\,\,\Omega_i\in\mathbb{Z},\,\,i=1,2,3,4\};
\end{array}$$
$$\Lambda_{3/4}(D_4)=\{\Omega_1(\bar{\omega}_1-\bar{\omega}_3-\bar{\omega}_4)+\Omega_2\bar{\omega}_2+\Omega_3\bar{\omega}_3+
2\Omega_4\bar{\omega}_4\,\,\mid\,\Omega_i\in\mathbb{Z},\,\,i=1,2,3,4\}.$$

Lie groups, corresponding to these characteristic lattices, are given in table \ref{Tab:list}.
Lie groups, corresponding to the lattices $\Lambda_{1/4}(D_4)$, $\Lambda_{1/2}(D_4)$, $\Lambda_{3/4}(D_4)$,
are isomorphic to $\mbox{SO}(8)\cong\mbox{Spin}(8)/\mathbb{Z}_2$.

a) Let us present formulas for Laplacian spectrum of the Lie group $\mbox{Spin}(8)$.

6a) By formula (\ref{d3}) we get the following set of highest weights of the Lie group $\mbox{Spin}(8)$.
$$\Lambda^{+}(\mbox{Spin(8)})=\left\{\sum\limits_{i=1}^{4}\Lambda_i\bar{\omega}_i\,\mid\,\Lambda_i\in\mathbb{Z}_{+},\,\,i=1,2,3,4\right\}.$$

7a) Set $\Lambda=\sum\limits_{i=1}^{4}\Lambda_i\bar{\omega}_i\in\Lambda^{+}(\mbox{Spin}(8))$, then by p.~4) eigenvalue  $\lambda(\Lambda)$ and dimension
 $d(\Lambda+\beta)$ are calculated by formulas  (\ref{d1}) and (\ref{d2}) respectively, where $\nu_i=\Lambda_i+1$, $\nu_i\in\mathbb{N}$, $i=1,2,3,4.$

8a) Applying formula (\ref{abc}) and results of preceding point, we get the following multiplicity of the eigenvalue $\lambda(\Lambda)$
$$\sigma(\Lambda)=\frac{1}{4320^2}\sum_{\Xi_8}\nu_1^2\nu_2^2\nu_3^2\nu_4^2(\nu_1+\nu_2)^2(\nu_2+\nu_3)^2(\nu_2+\nu_4)^2(\nu_1+\nu_2+\nu_3)^2$$
$$\times(\nu_1+\nu_2+\nu_4)^2(\nu_2+\nu_3+\nu_4)^2
(\nu_1+\nu_2+\nu_3+\nu_4)^2(\nu_1+2\nu_2+\nu_3+\nu_4)^2,$$
where
\begin{equation}
\label{xi8}
\begin{array}{r c l}
&&\Xi_5=\{(2\nu_1+2\nu_2+\nu_3+\nu_4)^2+(2\nu_2+\nu_3+\nu_4)^2+2\nu_3^2+\\
&&+2\nu_4^2=56-48\gamma\lambda\,\mid\,\nu_i\in\mathbb{N},\,\,i=1,2,3,4\}.
\end{array}
\end{equation}

The least by modulus non-zero eigenvalue of Laplacian is equal to $-\frac{7}{12\gamma}$ and corresponds to irreducible complex representations
 of the Lie group $\mbox{Spin}(8)$ with highest weights  $\bar{\omega}_1$, $\bar{\omega}_3$ and $\bar{\omega}_4$.
Dimensions of these representations are equal to $8$. Therefore the multiplicity of the eigenvalue $-\frac{7}{12\gamma}$ is equal to $8^2+8^2+8^2=192$.

b) Let us give formulas for Laplacian spectrum of the Lie group
$\mbox{SO}(8)\cong\mbox{Spin}(8)/\mathbb{Z}_2$.

6b) By formula (\ref{d3}), we get the sets of highest weights of the Lie group $\mbox{SO}(8)$
$$\begin{array}{r c l}
&&\Lambda^{+}(\mbox{SO}(8))=\{\Omega_1\bar{\omega}_1+\Omega_2\bar{\omega}_2+(2\Omega_3-\Omega_1)\bar{\omega}_3+(\Omega_4-\Omega_1)\bar{\omega}_4\,\mid\\
&&\Omega_i\in\mathbb{Z},\,i=1,2,3,4;\,\,\Omega_1\geq 0,\,\,\Omega_2\geq 0,\,\,2\Omega_3-\Omega_1\geq 0,\,\,\Omega_4-\Omega_1\geq 0\}.
\end{array}$$

7b) Set
$\Lambda=\Omega_1\bar{\omega}_1+\Omega_2\bar{\omega}_2+(2\Omega_3-\Omega_1)\bar{\omega}_3+(\Omega_4-\Omega_1)\bar{\omega}_4\in\Lambda^{+}(\mbox{SO}(8))$,
then by p.~4) eigenvalue  $\lambda(\Lambda)$ and dimension $d(\Lambda+\beta)$ are calculated by formulas  (\ref{d1}) and (\ref{d2}) respectively, where
$\nu_1=\Omega_1+1,$ $\nu_2=\Omega_2+1,$ $\nu_3=2\Omega_3-\Omega_1+1$, $\nu_4=\Omega_4-\Omega_1+1$; $\nu_i\in\mathbb{N}$, $i=1,2,3,4$;
$\nu_3\equiv\nu_1(\mbox{mod}\,2).$

8b) Applying formula (\ref{abc}) and results of preceding point, we get the following multiplicity of the eigenvalue $\lambda(\Lambda)$
$$\sigma(\Lambda)=\frac{1}{4320^2}\sum_{\Xi_9}\nu_1^2\nu_2^2\nu_3^2\nu_4^2(\nu_1+\nu_2)^2(\nu_2+\nu_3)^2(\nu_2+\nu_4)^2(\nu_1+\nu_2+\nu_3)^2$$
$$\times(\nu_1+\nu_2+\nu_4)^2(\nu_2+\nu_3+\nu_4)^2
(\nu_1+\nu_2+\nu_3+\nu_4)^2(\nu_1+2\nu_2+\nu_3+\nu_4)^2,$$
where
\begin{equation}
\label{xi9}
\begin{array}{r c l}
&&\Xi_6=\{(2\nu_1+2\nu_2+\nu_3+\nu_4)^2+(2\nu_2+\nu_3+\nu_4)^2+2\nu_3^2+\\
&&+2\nu_4^2=56-48\gamma\lambda\,\mid\,\nu_3\equiv\nu_1(\mbox{mod}\,2),\,\,\nu_i\in\mathbb{N},\,\,i=1,2,3,4\}.
\end{array}
\end{equation}

The least by modulus non-zero eigenvalue of Laplacian is equal to $-\frac{7}{12\gamma}$ and corresponds to irreducible complex representation of the Lie group
$\mbox{SO}(8)$ with highest weight $\bar{\omega}_4$.
The dimension of this representation is equal to $8$. Then the multiplicity of the eigenvalue $-\frac{7}{12\gamma}$ is equal to $8^2=64$.

с) Let us give formulas for Laplacian spectrum of the Lie group $\mbox{PSO}(8)\cong\mbox{SO}(8)/\mbox{C}(\mbox{SO}(8))$.

6с) By formula (\ref{d3}), we get the set of highest weights of the Lie group $\mbox{PSO}(8)$
$$\begin{array}{r c l}
&&\Lambda^{+}(\mbox{PSO}(8))=
\{\Omega_1\bar{\omega}_1+\Omega_2\bar{\omega}_2+(2\Omega_3-\Omega_1)\bar{\omega}_3+(2\Omega_4-\Omega_1)\bar{\omega}_4\,\mid\\
&&\Omega_i\in\mathbb{Z},\,i=1,2,3,4;\,\,\Omega_1\geq 0,\,\,\Omega_2\geq 0,\,\,2\Omega_3-\Omega_1\geq 0,\,\,2\Omega_4-\Omega_1\geq 0\}.
\end{array}$$

7с) Set $\Lambda=\Omega_1\bar{\omega}_1+\Omega_2\bar{\omega}_2+(2\Omega_3-\Omega_1)\bar{\omega}_3+(2\Omega_4-\Omega_1)\bar{\omega}_4
\in\Lambda^{+}(\mbox{PSO}(8))$, then by p.~4) eigenvalue $\lambda(\Lambda)$ and
dimension $d(\Lambda+\beta)$ are calculated by formulas (\ref{d1}) and (\ref{d2}) respectively, where
$\nu_1=\Omega_1+1$, $\nu_2=\Omega_2+1$, $\nu_3=2\Omega_3-\Omega_1+1$, $\nu_4=2\Omega_4-\Omega_1+1$, $\nu_i\in\mathbb{N}$, $i=1,2,3,4$,
$\nu_1\equiv\nu_3\equiv\nu_4(\mbox{mod}\,2).$

8с) Applying formula (\ref{abc}) and results of preceding point, we get the following multiplicity of the eigenvalue $\lambda(\Lambda)$
$$\sigma(\Lambda)=\frac{1}{4320^2}\sum_{\Xi_{10}}\nu_1^2\nu_2^2\nu_3^2\nu_4^2(\nu_1+\nu_2)^2(\nu_2+\nu_3)^2(\nu_2+\nu_4)^2(\nu_1+\nu_2+\nu_3)^2$$
$$\times(\nu_1+\nu_2+\nu_4)^2(\nu_2+\nu_3+\nu_4)^2
(\nu_1+\nu_2+\nu_3+\nu_4)^2(\nu_1+2\nu_2+\nu_3+\nu_4)^2,$$
where
\begin{equation}
\label{xi10}
\begin{array}{r c l}
&&\Xi_{7}=\{(2\nu_1+2\nu_2+\nu_3+\nu_4)^2+(2\nu_2+\nu_3+\nu_4)^2+2\nu_3^2+\\
&&+2\nu_4^2=56-48\gamma\lambda\,\mid\,\nu_1\equiv\nu_3\equiv\nu_4(\mbox{mod}\,2),\,\,\nu_i\in\mathbb{N},\,\,i=1,2,3,4\}.
\end{array}
\end{equation}

The least by modulus non-zero eigenvalue of Laplacian is equal to $-\frac{1}{\gamma}$ and corresponds to irreducible complex representation of the Lie group
$\mbox{PSO}(8)$ with highest weight $\bar{\omega}_2$.
The dimension of this representation is equal to $28$. Consequently the multiplicity of the eigenvalue $-\frac{1}{\gamma}$ is equal to $28^2=784$.

\section{Necessary information from number theory}

In this section are presented all necessary information on classical solutions of presentation problem of natural numbers by
values of some positively defined  integer quadratic forms from two, three and four variables on integer vectors, applying in next sections.

\begin{theorem}{\rm \cite{Ven}, \cite{Dev}}.
\label{sqq}
A natural number $k$ can be presented in the form
\begin{equation}
\label{square1}
k=x^2+y^2,\quad x,\,y\in\mathbb{Z},
\end{equation}
if and only if $k$ has no prime factor $p$ with condition $p\equiv 3(\Mod 4)$, which occurs in odd
power into factorization of $k$ by prime factors.

In addition a number $N_2(k)$ of all solutions to the equation (\ref{square1}) is equal to quadruplicate difference of quantities of (natural) divisors $d$ of $k$ such that
$d\equiv 1(\Mod 4)$ and divisors $d$ of $k$ such that $d\equiv 3(\Mod 4)$.
\end{theorem}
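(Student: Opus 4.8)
The plan is to interpret the problem in the ring of Gaussian integers $\mathbb{Z}[i]$, using that $x^2+y^2=N(x+yi)$ where $N(a+bi)=a^2+b^2$ is the norm, which is multiplicative. Counting integer pairs $(x,y)$ with $x^2+y^2=k$ is thus exactly counting elements $z\in\mathbb{Z}[i]$ with $N(z)=k$. First I would recall that $\mathbb{Z}[i]$ is Euclidean with respect to $N$, hence a unique factorization domain whose only units are $\pm 1,\pm i$; these four units will account for the factor $4$ in the formula for $N_2(k)$.

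The next step is to classify the rational primes by their behaviour in $\mathbb{Z}[i]$. By Euler's criterion, $-1$ is a quadratic residue modulo an odd prime $p$ iff $p\equiv 1\pmod 4$. For such $p$ a solution of $n^2\equiv -1\pmod 4$ (more precisely modulo $p$) gives $p\mid(n+i)(n-i)$ with $p$ dividing neither factor, so $p$ is reducible and $p=\pi\bar\pi$ with $N(\pi)=p$ --- this is Fermat's two-square theorem for primes. For $p\equiv 3\pmod 4$ the congruence $x^2+y^2\equiv 0\pmod p$ forces $x\equiv y\equiv 0$, so $p$ stays prime (inert); finally $2=-i(1+i)^2$ ramifies.

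The characterization now follows. Sufficiency is immediate from multiplicativity of $N$ together with the factorizations above. For necessity, if $k=N(z)$, factor $z$ into Gaussian primes: each inert prime $q\equiv 3\pmod 4$ dividing $z$ contributes $N(q)=q^2$ to $k$, so $q$ divides $k$ to an even power, which is precisely the stated obstruction.

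For the count, write $k=2^a\prod_i p_i^{b_i}\prod_j q_j^{c_j}$ with $p_i\equiv 1$ and $q_j\equiv 3\pmod 4$, and enumerate the $z$ with $N(z)=k$. By unique factorization the power of $1+i$ in $z$ is forced, each split prime $p_i=\pi_i\bar\pi_i$ allows $b_i+1$ ways to distribute its $b_i$ copies between $\pi_i$ and $\bar\pi_i$, each inert $q_j$ forces $c_j$ to be even, and there remain four unit choices; hence $N_2(k)=4\prod_i(b_i+1)$ if every $c_j$ is even and $N_2(k)=0$ otherwise. To identify this with $4(d_1(k)-d_3(k))$, I would introduce the non-principal character $\chi$ modulo $4$, note that $d_1(k)-d_3(k)=\sum_{d\mid k}\chi(d)$ is multiplicative, and check on prime powers that $\sum_{d\mid p^e}\chi(d)$ equals $e+1$ for $p\equiv 1$, equals $1$ for $p=2$, and equals $1$ or $0$ for $p\equiv 3$ according as $e$ is even or odd --- matching $\prod_i(b_i+1)$ termwise. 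The main obstacle is the split case: Fermat's theorem that $p\equiv 1\pmod 4$ factors in $\mathbb{Z}[i]$ rests on the residue computation for $-1$, and the distribution of prime powers among conjugate pairs must be organized so that each $z$ is counted exactly once.
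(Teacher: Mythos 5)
The paper does not prove this statement at all: Theorem \ref{sqq} is imported as a classical result, with citations to \cite{Ven} and \cite{Dev}, and the paper's own arguments begin only with the derived counting statements (Theorem \ref{sq1} onward). So there is no in-paper proof to compare against; your proposal must stand on its own, and it essentially does. Your route --- viewing representations $k=x^2+y^2$ as norms from $\mathbb{Z}[i]$, classifying rational primes as split ($p\equiv 1(\Mod 4)$), inert ($p\equiv 3(\Mod 4)$) or ramified ($p=2$), counting elements of norm $k$ via unique factorization to get $N_2(k)=4\prod_i(b_i+1)$ when all inert exponents are even and $0$ otherwise, and then matching this with $4\bigl(d_1(k)-d_3(k)\bigr)=4\sum_{d\mid k}\chi(d)$ by multiplicativity of the character divisor sum --- is the standard unique-factorization proof, and every step you describe is sound. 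The one point you flag but leave implicit should be made explicit, since the representation count hinges on it: for an odd split prime $p_i=\pi_i\bar{\pi}_i$ the conjugate primes $\pi_i$ and $\bar{\pi}_i$ are \emph{non-associate}. Indeed, if $\bar{\pi}_i=u\pi_i$ for a unit $u$, then $p_i$ would be associate to $\pi_i^2$; writing $\pi_i=x+yi$, the condition that $u\pi_i^2$ be a positive rational integer forces either $xy=0$ (making $p_i$ a perfect square) or $x=\pm y$ (making $p_i$ even), both impossible. This non-associateness is exactly what guarantees that the $b_i+1$ ways of distributing the exponent $b_i$ between $\pi_i$ and $\bar{\pi}_i$ yield pairwise distinct elements of norm $k$, in contrast to the ramified prime, where $1-i$ is associate to $1+i$ and the exponent of $1+i$ is forced; without this remark the bijection underlying the formula $N_2(k)=4\prod_i(b_i+1)$, and hence the final identity, is not fully justified.
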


\begin{theorem}{\rm \cite{BZS}}
\label{sq1}
Assume that a natural number $k$ can be presented in the form
\begin{equation}
\label{squared21}
k=x^2+y^2,\quad x<y,\,\,x,\,y\in\mathbb{N},
\end{equation}
$L_2(k)$ is the quantity of such presentations. Then

1. If $k\neq m^2$, $k\neq 2m^2$ for any $m\in\mathbb{N}$, then $L_2(k)=N_2(k)/8$.

2. If $k=m^2$ or $k=2m^2$ for some $m\in\mathbb{N}$, then $L_2(k)=\left(N_2(k)-4\right)/8$.

\end{theorem}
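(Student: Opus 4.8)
The plan is to realize $L_2(k)$ as an orbit count for the natural symmetry group acting on the full solution set of (\ref{square1}), and to read off the two cases from the sizes of the degenerate orbits. Let $S=\{(x,y)\in\mathbb{Z}^2 : x^2+y^2=k\}$, so that $|S|=N_2(k)$ by the definition in Theorem \ref{sqq}. The sign changes $(x,y)\mapsto(\pm x,\pm y)$ together with the coordinate swap $(x,y)\mapsto(y,x)$ preserve $S$ and generate a group $G$ of order $8$ (the symmetry group of the square lattice fixing the origin), whose typical orbit is
\[
\{(\pm x,\pm y),\,(\pm y,\pm x)\}.
\]
First I would observe that every presentation counted by $L_2(k)$, that is every pair with $0<x<y$, is a \emph{generic} point: its coordinates are nonzero and of distinct absolute value, so its stabilizer in $G$ is trivial, its orbit has exactly $8$ elements, and these are the eight distinct entries displayed above. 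Conversely, each generic orbit contains a unique representative with $0<x<y$, obtained by taking absolute values and ordering them. Hence generic orbits are in bijection with the $L_2(k)$ presentations.

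Next I would classify the non-generic points, which is where all the case distinction comes from. A point of $S$ fails to be generic exactly when some coordinate vanishes or when $|x|=|y|$. A zero coordinate forces $k=m^2$ with $m=\sqrt{k}$, and then all such points form the single orbit $\{(\pm m,0),(0,\pm m)\}$; the stabilizer of $(m,0)$ is generated by the sign change of the (vanishing) second coordinate, so by orbit--stabilizer this orbit has size $8/2=4$. The condition $|x|=|y|$ forces $k=2m^2$ with $m=\sqrt{k/2}$, and then these points form the single orbit $\{(\pm m,\pm m)\}$, whose stabilizer is generated by the swap and again has order $2$, so the orbit has size $4$. Since $m^2=2n^2$ is impossible over $\mathbb{Z}$ by the irrationality of $\sqrt{2}$, the two degeneracies are mutually exclusive, and in each degenerate case there is exactly one orbit, of size $4$.

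Finally I would add up orbit sizes. In case~1 there are no degenerate points, so $N_2(k)=|S|=8\,L_2(k)$ and therefore $L_2(k)=N_2(k)/8$. In case~2 there is precisely one degenerate orbit, of size $4$, whence $N_2(k)=8\,L_2(k)+4$ and $L_2(k)=(N_2(k)-4)/8$. The hard part is not any computation but the bookkeeping: one must verify that the degenerate solutions always constitute a \emph{single} orbit of size exactly $4$, that the two degeneracy types never occur simultaneously, and that the correspondence between generic orbits and presentations with $0<x<y$ is a genuine bijection. Once these points are pinned down, both formulas follow immediately by summation.
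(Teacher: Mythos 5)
Your proof is correct, and in substance it is the same multiplicity count that this paper uses for the analogous Theorems \ref{j}, \ref{j1}, \ref{ssq} and \ref{4sqnew} (Theorem \ref{sq1} itself is only cited from \cite{BZS} here, with no proof given): generic solutions come in packets of eight, namely $(\pm x,\pm y)$ and $(\pm y,\pm x)$, while the degenerate solutions $(\pm m,0),(0,\pm m)$ or $(\pm m,\pm m)$ contribute exactly four, and the two degeneracy types exclude each other because $m^2=2n^2$ is impossible. Your orbit--stabilizer packaging of this bookkeeping is a harmless rephrasing of that counting, not a different argument.
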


Theorems \ref{sqq} and \ref{sq1} imply directly

\begin{corollary}{\rm \cite{BZS}}
\label{cor}
A natural number $k$ can be presented in the form (\ref{squared21}) if and only if the following conditions are fulfilled.

1. $k\geq 5$ and $k$ has no prime divisor $p$ with condition $p\equiv 3(\Mod 4)$, which occurs in odd power into factorization of $k$ by prime factors.

2. If $k=m^2$ or $k=2m^2$ for some $m\in\mathbb{N}$ then the difference of quantities of (natural) divisors $d$ of $k$ such that $d\equiv 1(\Mod 4)$ and divisors $d$ of $k$ such that
$d\equiv 3(\Mod 4)$ is more than 1.
\end{corollary}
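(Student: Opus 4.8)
The plan is to derive the corollary directly from Theorems \ref{sqq} and \ref{sq1}, treating each of the two claimed conditions as an equivalence assembled from the known quantities $N_2(k)$ and $L_2(k)$. The whole statement characterizes when $L_2(k)\geq 1$, so first I would unfold the definition: $k$ admits a representation \eqref{squared21} with $x<y$, $x,y\in\mathbb{N}$ exactly when $L_2(k)\geq 1$. I would then split into the two cases of Theorem \ref{sq1} and read off the positivity condition in each case.

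First I would handle the generic case $k\neq m^2$ and $k\neq 2m^2$. Here Theorem \ref{sq1}(1) gives $L_2(k)=N_2(k)/8$, so $L_2(k)\geq 1$ is equivalent to $N_2(k)\geq 8$, which in particular forces $N_2(k)>0$. By Theorem \ref{sqq}, $N_2(k)>0$ is equivalent to the stated prime-factorization condition (no prime $p\equiv 3\,(\Mod 4)$ occurring to an odd power). The remaining step is to check that this prime condition, together with $k$ lying outside the exceptional square/double-square families, already forces $N_2(k)\geq 8$ and $k\geq 5$; here I would note that the smallest representable $k$ is $5=1^2+2^2$, and that when $k$ is representable but not of the exceptional shape, the representation with $x<y$ strictly is automatic, so no integer solutions are lost to the ordering constraint. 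This yields Condition 1 of the corollary in the generic case.

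Next I would treat the exceptional case $k=m^2$ or $k=2m^2$, where Theorem \ref{sq1}(2) gives $L_2(k)=(N_2(k)-4)/8$. Now $L_2(k)\geq 1$ is equivalent to $N_2(k)\geq 12$, i.e. $N_2(k)-4\geq 8$. Invoking the second clause of Theorem \ref{sqq}, which identifies $N_2(k)$ as four times the difference $D(k)$ between the number of divisors $d\equiv 1\,(\Mod 4)$ and those $d\equiv 3\,(\Mod 4)$, the inequality $N_2(k)\geq 12$ becomes $D(k)\geq 3$, i.e. this divisor difference exceeds $1$. This is precisely Condition 2 of the corollary. I would also remark that the ``$-4$'' in Theorem \ref{sq1}(2) accounts exactly for the solutions with $x=y$ or with one coordinate zero (the representations $k=m^2+0^2$ and, for $k=2m^2$, $k=m^2+m^2$), which are excluded by the strict requirement $x<y$, $x,y\in\mathbb{N}$ of \eqref{squared21}.

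The main obstacle is bookkeeping at the boundary: I must verify that the prime condition of Condition 1 is genuinely \emph{sufficient} for representability in the strict form, not merely necessary. Concretely, for non-exceptional $k$ the condition $N_2(k)>0$ (from Theorem \ref{sqq}) must be upgraded to $N_2(k)\geq 8$, and I would confirm this by observing that $N_2(k)=4D(k)$ is always a multiple of $8$ once $k$ is a non-exceptional representable number, since the relevant solutions come in sign-and-swap orbits of size $8$. The cleanest route is therefore to tie every inequality back to the orbit structure of $\mathbb{Z}^2$-solutions under the symmetries $(x,y)\mapsto(\pm x,\pm y)$ and $(x,y)\mapsto(y,x)$, which produces orbits of size $8$ in the generic case and smaller orbits (size $4$) precisely on the exceptional diagonal and axis points; this orbit-counting is exactly what the factors $8$ and the correction $4$ in Theorem \ref{sq1} encode, so the two conditions of the corollary fall out without further computation.
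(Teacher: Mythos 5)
Your proposal is correct and takes essentially the same route as the paper, which offers no separate argument for this corollary beyond stating that it follows directly from Theorems \ref{sqq} and \ref{sq1} --- precisely the case-by-case unpacking (generic $k$ versus $k=m^2$ or $k=2m^2$, with orbit counting under signs and swap) that you carry out. The one step you should wire in explicitly is the ``if'' direction in the exceptional case: the corollary's hypothesis that the divisor difference $D(k)$ exceeds $1$ (i.e.\ $D(k)\geq 2$) must be upgraded to the $D(k)\geq 3$ your chain of equivalences actually requires, and this follows because for $k=m^2$ or $k=2m^2$ there is exactly one solution orbit of size $4$ and all others have size $8$, so $N_2(k)=4D(k)\equiv 4\,(\Mod 8)$ and $D(k)$ is odd --- equivalently, $L_2(k)=\left(N_2(k)-4\right)/8$ is an integer --- a fact your closing orbit-structure remark already contains but never applies to this particular inequality.
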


\begin{theorem} {\rm \cite{Buh},\,\cite{Ven}}
1. An odd natural number $k$ can be presented in the form
\begin{equation}
\label{square2}
k=x^2+2y^2,\quad x,\,y\in\mathbb{Z},
\end{equation}
where $\mbox{GCD}(x,y)=1$, if and only if the factorization of $k$ by prime factors does not contain prime numbers of the form $8n+5$ и $8n+7$.

2. For any  $k\in\mathbb{N}$ the number $N_{1,2}(k)$ of all solutions to the equation (\ref{square2})
is equal to doubled difference of quantities (of natural) divisors $d$ of $k$ such that
$d\equiv 1({\rm mod}\,8)$ or $d\equiv 3({\rm mod}\,8)$ and divisors $d$ of $k$ such that $d\equiv 5({\rm mod}\,8)$ or $d\equiv 7({\rm mod}\,8)$.
\end{theorem}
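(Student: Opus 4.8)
The plan is to realize $x^{2}+2y^{2}$ as the norm form of the ring $\mathcal{O}=\mathbb{Z}[\sqrt{-2}]$ of integers of the imaginary quadratic field $K=\mathbb{Q}(\sqrt{-2})$, and to exploit the fact that $\mathcal{O}$ is norm-Euclidean, hence a principal ideal domain, with unit group $\{\pm 1\}$. Since $N(x+y\sqrt{-2})=x^{2}+2y^{2}$, solutions of $k=x^{2}+2y^{2}$ correspond bijectively to elements $\alpha\in\mathcal{O}$ with $N(\alpha)=k$. The arithmetic backbone is the splitting law: an odd prime $p$ is a norm (splits in $\mathcal{O}$) precisely when $-2$ is a quadratic residue modulo $p$, while $2$ is ramified with $N(\sqrt{-2})=2$. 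Using the supplementary laws of quadratic reciprocity one computes $\left(\frac{-2}{p}\right)=\left(\frac{-1}{p}\right)\left(\frac{2}{p}\right)$, which equals $1$ exactly for $p\equiv 1,3\,(\Mod 8)$ and $-1$ for $p\equiv 5,7\,(\Mod 8)$; this dichotomy produces the residue classes appearing in both parts of the statement.

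For part~2 I would count representations through ideals. Since the only units are $\pm 1$, every principal ideal of norm $k$ has exactly two generators, so $N_{1,2}(k)=2\,r(k)$, where $r(k)$ is the number of integral ideals of norm $k$. The function $r$ is multiplicative, and I would evaluate it on prime powers from the splitting law: a split prime $p$ contributes $a+1$ ideals of norm $p^{a}$, an inert prime contributes $1$ or $0$ according to the parity of $a$, and the ramified prime $2$ contributes exactly one ideal of each norm $2^{a}$. A short check shows that in every case this equals $\sum_{d\mid p^{a}}\chi(d)$, where $\chi$ is the nontrivial character modulo $8$ with $\chi(1)=\chi(3)=1$ and $\chi(5)=\chi(7)=-1$. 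By multiplicativity $r(k)=\sum_{d\mid k}\chi(d)$, and expanding $\chi$ gives $r(k)$ as the difference between the number of divisors $d\equiv 1,3\,(\Mod 8)$ and those $d\equiv 5,7\,(\Mod 8)$; multiplying by $2$ yields the asserted formula for $N_{1,2}(k)$.

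For part~1 I would treat primitivity by a direct divisibility argument together with an ideal-theoretic construction. The necessity of excluding primes $\equiv 5,7\,(\Mod 8)$ is elementary: if such an inert prime $p$ divided $k=x^{2}+2y^{2}$ with $\gcd(x,y)=1$, then $p\nmid y$ is impossible, since $(xy^{-1})^{2}\equiv-2\,(\Mod p)$ would make $-2$ a residue; hence $p\mid y$, forcing $p\mid x$ and contradicting $\gcd(x,y)=1$. For sufficiency, given an odd $k$ all of whose prime factors lie in $\{1,3\}$ modulo $8$, I would factor the corresponding ideal into split prime ideals $\mathfrak{p}_{i}$, choose one prime from each conjugate pair so that the product ideal is coprime to its conjugate, and read off a generator $x+y\sqrt{-2}$; coprimality of the ideal to its conjugate translates exactly into $\gcd(x,y)=1$.

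The main obstacle I anticipate is the bookkeeping in part~1 rather than any conceptual difficulty: ensuring that for repeated split prime factors one can still select generators making the representation primitive, and verifying that the condition ``no prime factor $\equiv 5,7\,(\Mod 8)$'' is not only necessary but, for odd $k$, sufficient to realize a primitive representation. Part~2 is essentially automatic once the multiplicativity of $r$ and the prime-power values are in hand; the only care needed there is the correct treatment of the ramified prime $2$ and the verification that the factor $2$ coming from the units $\pm 1$ matches the doubling in the statement.
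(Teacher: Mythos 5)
The statement you were asked to prove is not actually proved in the paper: it is quoted as a classical result with citations to Buchshtab's and Venkov's number-theory textbooks, so there is no in-paper argument to compare against, and your proposal must be judged on its own merits. Judged so, it is correct. The dictionary between solutions of $k=x^{2}+2y^{2}$ and elements of norm $k$ in $\mathbb{Z}[\sqrt{-2}]$, the facts that this ring is norm-Euclidean (hence a PID) with unit group $\{\pm 1\}$, the splitting criterion $\left(\tfrac{-2}{p}\right)=1$ exactly for $p\equiv 1,3\ (\mathrm{mod}\ 8)$, and your prime-power ideal counts (split: $a+1$; inert: $1$ or $0$ by parity; ramified $2$: always $1$) combine by multiplicativity to give $N_{1,2}(k)=2\sum_{d\mid k}\chi(d)$ with $\chi$ the nontrivial character mod $8$, which is precisely part 2. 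For part 1, the necessity argument is sound (an inert prime dividing a primitively represented $k$ would make $-2$ a quadratic residue), and the sufficiency construction already closes the ``bookkeeping'' gap you worried about: taking $\mathfrak{a}=\prod\mathfrak{p}_{i}^{a_{i}}$ with one prime ideal chosen per conjugate pair makes $\mathfrak{a}$ and $\bar{\mathfrak{a}}$ coprime, and any rational prime $q$ dividing both $x$ and $y$ would force $(q)$ to divide both $\mathfrak{a}$ and $\bar{\mathfrak{a}}$, a contradiction; for repeated factors no mixing of $\mathfrak{p}_i$ and $\bar{\mathfrak{p}}_i$ occurs, so nothing further is needed. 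The only difference from the cited sources is one of technology rather than substance: Venkov and Buchshtab obtain the same divisor-sum formula by the elementary theory of binary quadratic forms and counting of proper representations, whereas your ideal-theoretic route treats the representation count and the primitivity criterion uniformly, at the cost of invoking the arithmetic of $\mathbb{Q}(\sqrt{-2})$.
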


\begin{theorem}
\label{j}
Assume that a natural number $k$ can be presented in the form
\begin{equation}
\label{sq}
k=x^2+2y^2,\quad x\neq y,\,\,x,\,y\in\mathbb{N},
\end{equation}
$L_{1,2}(k)$ is the quantity of such presentations. Then

1. If $k\neq\alpha m^2$ for any $m\in\mathbb{N}$, $\alpha=1,2,3$, then $L_{1,2}(k)=N_{1,2}(k)/4$.

2. If $k=m^2$ or $k=2m^2$ for some $m\in\mathbb{N}$, then $L_{1,2}(k)=\left(N_{1,2}(k)-2\right)/4$.

3. If $k=3m^2$ for some $m\in\mathbb{N}$ then $L_{1,2}(k)=\left(N_{1,2}(k)-4\right)/4$.
\end{theorem}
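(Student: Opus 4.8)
The plan is to mirror the proof of Theorem~\ref{sq1}, passing from the count $N_{1,2}(k)$ of all integer solutions of $k=x^2+2y^2$ to the count $L_{1,2}(k)$ of solutions in natural numbers with $x\neq y$ by analysing the action of sign changes and isolating the degenerate solutions. First I would let $P$ denote the number of solutions $(x,y)$ with $x\geq 1$ and $y\geq 1$ (dropping, for the moment, the restriction $x\neq y$). The group $\{\pm 1\}^2$ acts on $\mathbb{Z}^2$ by independent sign flips of $x$ and of $y$, and it preserves the form $x^2+2y^2$; every solution with both coordinates nonzero has a free orbit of size $4$ containing exactly one representative with $x\geq 1,\ y\geq 1$. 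Hence the solutions with $xy\neq 0$ contribute $4P$ to $N_{1,2}(k)$.

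Next I would account for the solutions lying on the coordinate axes. A solution with $x=0$ forces $k=2y^2$, i.e. $k=2m^2$ for some $m\in\mathbb{N}$, and then contributes the two points $(0,\pm m)$; a solution with $y=0$ forces $k=x^2$, i.e. $k=m^2$, contributing $(\pm m,0)$. Writing $\varepsilon_1,\varepsilon_2\in\{0,1\}$ for the indicators of the conditions $k=m^2$ and $k=2m^2$ respectively, this yields the bookkeeping identity $N_{1,2}(k)=4P+2\varepsilon_1+2\varepsilon_2$, so that $P=\bigl(N_{1,2}(k)-2\varepsilon_1-2\varepsilon_2\bigr)/4$.

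Then I would relate $P$ to $L_{1,2}(k)$. The solutions counted by $P$ but excluded from $L_{1,2}(k)$ are exactly those with $x=y\geq 1$, and such a solution exists precisely when $k=x^2+2x^2=3x^2$, i.e. $k=3m^2$; in that case it is unique. Thus $L_{1,2}(k)=P-\varepsilon_3$, where $\varepsilon_3$ indicates the condition $k=3m^2$. Substituting the expression for $P$ gives the single master formula $L_{1,2}(k)=\bigl(N_{1,2}(k)-2\varepsilon_1-2\varepsilon_2\bigr)/4-\varepsilon_3$, from which the three cases of the theorem follow once the special conditions are disentangled.

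The remaining step, and the only genuinely delicate point, is to verify that the three conditions $k=m^2$, $k=2m^2$, $k=3m^2$ are pairwise incompatible, so that at most one of $\varepsilon_1,\varepsilon_2,\varepsilon_3$ can be nonzero. This is an elementary unique-factorization argument: the equalities $a^2=2b^2$, $a^2=3b^2$, and $2a^2=3b^2$ each force an impossible parity for the exponent of $2$ or of $3$ in the prime factorisation. With this disjointness in hand, Case~1 ($\varepsilon_1=\varepsilon_2=\varepsilon_3=0$) gives $L_{1,2}(k)=N_{1,2}(k)/4$; Case~2 ($\varepsilon_1+\varepsilon_2=1$ and $\varepsilon_3=0$) gives $L_{1,2}(k)=(N_{1,2}(k)-2)/4$; and Case~3 ($\varepsilon_3=1$ and $\varepsilon_1=\varepsilon_2=0$) gives $L_{1,2}(k)=(N_{1,2}(k)-4)/4$, which are exactly the claimed formulas.
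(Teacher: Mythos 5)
Your proof is correct and follows essentially the same route as the paper's: both count the integer solutions of $k=x^2+2y^2$ by grouping the solutions with $xy\neq 0$ into sign-flip orbits of size $4$ and then accounting separately for the degenerate solutions $(\pm m,0)$, $(0,\pm m)$, $(\pm m,\pm m)$ arising when $k=m^2$, $2m^2$, $3m^2$. The only difference is organizational: you package the case analysis into a single master formula with indicator variables and explicitly verify that the three special conditions are pairwise incompatible, a fact the paper's case-by-case proof uses implicitly.
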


\begin{proof}
1. Let $k\neq\alpha m^2$ for any $m\in\mathbb{N}$, $\alpha=1,2,3$. Then if $(x,y)$ is a presentation of  $k$ in the form (\ref{square2}), then
$x\neq 0$, $y\neq 0$ and $x\neq y$. To every presentation $(x,y)$ of $k$ in the form (\ref{sq})
correspond exactly four  different presentations of $k$ in the form (\ref{square2}), namely, ordered pairs $(\pm x,\pm y)$.
Therefore $N_{1,2}(k)=4L_{1,2}(k)$, whence it follows the required formula.

2. Let $k=m^2$ ($k=2m^2$) for some $m\in\mathbb{N}$. Then, besides presentations of $k$ in the form (\ref{square2}), described in p.~1, there are also
only two different presentations of $k$  in the form (\ref{square2}), namely, ordered pairs $(\pm m,0)$ (respectively, $(0,\pm m)$). Therefore
$N_{1,2}(k)=4L_{1,2}(k)+2$, whence it follows the required formula.

3. Let $k=3m^2$ for some $m\in\mathbb{N}$. Then, besides presentations of $k$ in the form (\ref{square2}), described in p.~1, there are also
only four different presentations of $k$  in the form (\ref{square2}), namely, ordered pairs $(\pm m,\pm m)$. Therefore
$N_{1,2}(k)=4L_{1,2}(k)+4$, whence it follows the required formula.
\end{proof}

Later we shall need the notion of \textit{the Legendre-Jacobi symbol}. There are different definitions of this symbol. Here is given the most simple its definition, taken from \cite{C} and belonging to Russian mathematician E.I.~Zolotarev (1847--1878).

\begin{definition}
Let $n>1$ be an odd natural number, $a$ be an integer number, coprime with $n.$ The Legendre-Jacobi
symbol $\left(\frac{a}{n}\right)$ is the sign of permutation on residue ring $\Mod n$ obtained by multiplication of this ring by $a\Mod n.$
\end{definition}

\begin{theorem}{\rm \cite{Ven}}
Assume that a natural number $k$ can be presented in the form
\begin{equation}
\label{mq1}
k=x^2+3y^2,\quad x,\,y\in\mathbb{Z},
\end{equation}
$N_{1,3}(k)$ is the quantity of such presentations. Set $k=2^{l}m$, where $m$ is an odd number, $l\in\mathbb{Z}_{+}$.
Then

1. If $l=0$ then $N_{1,3}(k)=2\chi(m)$.

2. If $l$ is an odd number then $N_{1,3}(k)=0$.

3. If $l\neq 0$ is an even number then $N_{1,3}(k)=6\chi(m)$.

Here $\chi(m)=\sum\left(\frac{-3}{d}\right)$, in both cases the summation is taken by all divisors $d$ of  $m$.
\end{theorem}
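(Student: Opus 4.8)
The plan is to realize both relevant quadratic forms as norm forms in the ring of Eisenstein integers and to reduce the count $N_{1,3}(k)$ to a bookkeeping of parities inside that ring. Let $\rho=\frac{-1+\sqrt{-3}}{2}$, a primitive cube root of unity, so that $\mathbb{Z}[\rho]$ is a principal ideal domain with exactly six units $\{\pm 1,\pm\rho,\pm\rho^2\}$ and norm $N(a+b\rho)=a^2-ab+b^2$. I would first record the standard fact that the number of ideals of $\mathbb{Z}[\rho]$ of norm $n$ equals $\chi(n):=\sum_{d\mid n}\left(\frac{-3}{d}\right)$ (this follows from the splitting of primes, $p$ being split, inert, or ramified according to $\left(\frac{-3}{p}\right)=1,-1,0$, together with multiplicativity); here I extend $\chi$ to all of $\mathbb{N}$ by the same divisor-sum formula, which agrees with the theorem's $\chi$ on odd arguments. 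Since the six units act simply transitively on the generators of a nonzero principal ideal, the number of elements of norm $n$ is $6\chi(n)$, and the unit-orbits are in bijection with the ideals, hence number $\chi(n)$ in total. Finally I would observe that $\sqrt{-3}=1+2\rho$, so $a+b\sqrt{-3}=(a+b)+2b\rho$, whence $\mathbb{Z}[\sqrt{-3}]$ consists precisely of those $u+v\rho$ with $v$ even; consequently $N_{1,3}(k)$ equals the number of elements of $\mathbb{Z}[\rho]$ of norm $k$ whose $\rho$-coefficient is even.

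Next I would analyze how the parity of the $\rho$-coefficient is distributed over a single unit-orbit. Multiplication by $\rho$ sends $(a,b)\mapsto(-b,a-b)$ and by $-1$ sends $(a,b)\mapsto(-a,-b)$, so for $k\geq 1$ each orbit has six elements whose $\rho$-coefficients reduce mod $2$ to the multiset $\{b,a+b,a\}$ taken twice. If not all of $a,b,a+b$ are even, then exactly one of them is even (their sum is even, so an even number are odd), and the orbit contains exactly $2$ elements with even $\rho$-coefficient; if $a\equiv b\equiv 0\pmod 2$, then all six qualify. An element has both coordinates even exactly when it lies in $2\mathbb{Z}[\rho]$, i.e. equals $2\beta$ with $N(\beta)=k/4$, and $\beta\mapsto 2\beta$ is a unit-equivariant bijection; hence the number of such all-even orbits of norm $k$ is $\chi(k/4)$ (read as $0$ unless $4\mid k$), while the remaining $\chi(k)-\chi(k/4)$ orbits are of the first type. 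Summing the contributions gives the key identity
\[
N_{1,3}(k)=2\bigl(\chi(k)-\chi(k/4)\bigr)+6\,\chi(k/4)=2\chi(k)+4\chi(k/4).
\]

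To finish I would evaluate this using multiplicativity of $\chi$ and the inertness of $2$, so that $\left(\frac{-3}{2}\right)=-1$ and $\chi(2^j)=\sum_{i=0}^{j}(-1)^i$ equals $1$ for even $j$ and $0$ for odd $j$. Writing $k=2^l m$ with $m$ odd gives $\chi(k)=\chi(2^l)\chi(m)$ and $\chi(k/4)=\chi(2^{l-2})\chi(m)$ (the latter $0$ unless $l\geq 2$). For $l=0$ this yields $N_{1,3}(k)=2\chi(m)$; for $l$ odd both terms vanish, so $N_{1,3}(k)=0$; for $l\neq 0$ even both $\chi(2^l)$ and $\chi(2^{l-2})$ equal $1$, giving $N_{1,3}(k)=2\chi(m)+4\chi(m)=6\chi(m)$, which are the three asserted cases. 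The main obstacle is not the algebraic number theory input, which is classical, but the transition from the maximal order $\mathbb{Z}[\rho]$ to the conductor-$2$ suborder $\mathbb{Z}[\sqrt{-3}]$: the entire content of the three cases is concentrated in the parity bookkeeping of the second paragraph together with the inertness of $2$, so I would take particular care that the orbit count $\chi(k)-\chi(k/4)$ and the all-even identification are exactly right, verifying them against small values such as $k=1,3,4,7$.
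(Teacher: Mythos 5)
Your proof is correct. Note first that the paper itself gives no argument for this theorem---it is quoted from Venkov \cite{Ven}---so there is no internal proof to compare against; what you have written is a genuine, self-contained proof, and it is the algebraic-number-theoretic counterpart of the classical quadratic-forms derivation. The three facts you import (that $\mathbb{Z}[\rho]$ is a PID whose unit group has order six, that the number of ideals of norm $n$ is $\chi(n)=\sum_{d\mid n}\left(\frac{-3}{d}\right)$ with the symbol read as the Kronecker symbol, and that $\mathbb{Z}[\sqrt{-3}]$ sits inside $\mathbb{Z}[\rho]$ as the elements $u+v\rho$ with $v$ even) are standard, and the parity bookkeeping that carries the whole proof is exactly right: on a free unit-orbit the $\rho$-coefficients reduce mod $2$ to $a$, $b$, $a+b$ each twice, so an orbit not contained in $2\mathbb{Z}[\rho]$ contributes exactly $2$ representations and an all-even orbit contributes $6$, while the all-even orbits correspond unit-equivariantly to orbits of norm $k/4$. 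This yields
\[
N_{1,3}(k)=2\bigl(\chi(k)-\chi(k/4)\bigr)+6\chi(k/4)=2\chi(k)+4\chi(k/4),
\]
and with $\chi(2^j)=1$ for even $j$, $0$ for odd $j$ (inertness of $2$) and multiplicativity of the divisor sum, the three cases follow; the formula checks out on $k=1,3,4,7,12,16$. Two small points are worth making explicit in a write-up: (i) the orbit analysis uses that nonzero elements have trivial unit stabilizer, so every orbit has exactly six elements---you state this, and it is what confines the argument to $k\geq 1$; (ii) the agreement of your Kronecker-extended $\chi$ with the paper's Jacobi-symbol $\chi$ on odd arguments deserves the one-line reciprocity computation $\left(\frac{-3}{d}\right)=\left(\frac{d}{3}\right)$, since the paper's Zolotarev-style symbol is defined only for odd lower entries. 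Compared with the elementary treatment in Venkov via binary forms of discriminant $-12$, your route trades elementary self-containedness for brevity and for making the conductor-$2$ phenomenon (the jump from the factor $2$ to the factor $6$) conceptually transparent.
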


\begin{theorem}
\label{j1}
Assume that a natural number $k$ can be presented in the form
\begin{equation}
\label{q1}
k=x^2+3y^2,\quad x\neq y,\,\,x,\,y\in\mathbb{N},
\end{equation}
$L_{1,3}(k)$ is the quantity of such presentations. Then

1. If $k\neq\alpha m^2$ for any $m\in\mathbb{N}$, $\alpha=1,3,4$, then $L_{1,3}(k)=N_{1,3}(k)/4$.

2. If $k=m^2$ for some odd number $m\in\mathbb{N}$ then
$L_{1,3}(k)=(N_{1,3}(k)-2)/4.$

3. If $k=3m^2$ for some $m\in\mathbb{N}$ then $L_{1,3}(k)=(N_{1,3}(k)-2)/4$.

4. If $k=4m^2$ for some $m\in\mathbb{N}$ then $L_{1,3}(k)=(N_{1,3}(k)-6)/4$.
\end{theorem}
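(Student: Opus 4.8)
The plan is to reproduce, for the ternary form $x^{2}+3y^{2}$, the sign-counting argument already used for $x^{2}+2y^{2}$ in Theorem~\ref{j}. First I would establish a four-to-one correspondence between the integer solutions counted by $N_{1,3}(k)$ in (\ref{mq1}) and the positive solutions counted by $L_{1,3}(k)$ in (\ref{q1}). Concretely, to every pair $(x,y)$ with $x,y\in\mathbb{N}$ and $x\neq y$ there correspond exactly the four integer pairs $(\pm x,\pm y)$, which are pairwise distinct because $x,y>0$ and solve (\ref{mq1}) since only the squares of $x,y$ enter. Conversely, any integer solution $(a,b)$ with $a\neq 0$, $b\neq 0$ and $|a|\neq|b|$ arises in this way from the unique positive pair $(|a|,|b|)$. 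Hence the integer solutions of this ``generic'' type number precisely $4L_{1,3}(k)$.

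The next step is to isolate the degenerate integer solutions that fall outside this correspondence, i.e.\ those violating $a\neq 0$, $b\neq 0$ or $|a|\neq|b|$. There are three mutually disjoint families: (i) $a=0$, forcing $k=3b^{2}$ and contributing the two pairs $(0,\pm m)$; (ii) $b=0$, forcing $k=a^{2}$ and contributing $(\pm m,0)$; (iii) $a,b\neq 0$ with $|a|=|b|$, forcing $k=a^{2}+3a^{2}=4a^{2}$ and contributing the four pairs $(\pm m,\pm m)$. Each family is nonempty exactly when $k$ equals $3m^{2}$, $m^{2}$, or $4m^{2}$ respectively. Summing the generic count with whichever degenerate families occur gives the master identity $N_{1,3}(k)=4L_{1,3}(k)+\delta(k)$, where $\delta(k)$ is the total number of degenerate solutions, whence $L_{1,3}(k)=(N_{1,3}(k)-\delta(k))/4$.

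It remains to evaluate $\delta(k)$ in each of the four listed cases, and here lies the one genuine subtlety. One checks that $k=3m^{2}$ is never a nonzero perfect square nor a quadruple of a square, so families (ii) and (iii) are empty there; this yields case~1 ($\delta=0$), case~3 ($\delta=2$ from family (i)), and, for an \emph{odd} perfect square $k=m^{2}$, case~2 ($\delta=2$ from family (ii), while family (iii) is empty since $m^{2}\not\equiv 0\ (\mathrm{mod}\ 4)$). The point requiring care is case~4: since $k=4m^{2}=(2m)^{2}$ is \emph{simultaneously} a perfect square and four times a square, both family (ii) and family (iii) are nonempty, so $\delta=2+4=6$ and $L_{1,3}(k)=(N_{1,3}(k)-6)/4$. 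Verifying that the four hypotheses are mutually exclusive and that no further coincidences occur (for instance that a number of the form $3m^{2}$ can be neither a perfect square nor of the form $4m^{2}$, by comparing $3$-adic valuations) completes the argument; this bookkeeping of simultaneous degeneracies is the main thing to get right, the rest being the same elementary sign count as in Theorem~\ref{j}.
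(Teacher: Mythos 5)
Your proof is correct and follows essentially the same route as the paper's: the four-to-one sign correspondence $(\pm x,\pm y)$ for generic solutions, plus a count of the degenerate integer solutions ($x=0$, $y=0$, or $|x|=|y|$) occurring precisely when $k=3m^2$, $k=m^2$, or $k=4m^2$, with case~4 combining the two families $(\pm 2m,0)$ and $(\pm m,\pm m)$ to give the correction $6$. Your explicit verification that the degenerate families cannot coincide (via irrationality of $\sqrt{3}$ and parity) is a point the paper leaves implicit, but the argument is the same.
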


\begin{proof}
1. Let $k\neq\alpha m^2$ for any $m\in\mathbb{N}$, $\alpha=1,3,4$. Then if $(x,y)$ is a presentation of $k$ in the form (\ref{mq1}), then
$x\neq 0$, $y\neq 0$ and $x\neq y$.
To every presentation $(x,y)$ of $k$ in the form (\ref{q1})
correspond exactly four different presentations of $k$ in the form (\ref{mq1}), namely, ordered pairs  $(\pm x,\pm y)$.
Therefore  $N_{1,3}(k)=4L_{1,3}(k)$, whence it follows the required formula.

2. Let $k=m^2$ for some odd number $m\in\mathbb{N}$. Then, besides presentations of $k$ in the form (\ref{mq1}), described in p.~1,
there are also
only two different presentations of $k$  in the form (\ref{mq1}), namely, ordered pairs $(\pm m,0)$. Therefore
$N_{1,3}(k)=4L_{1,3}(k)+2$, whence it follows the required formula.

3. Let $k=3m^2$ for some $m\in\mathbb{N}$. Then, besides presentations of $k$ in the form (\ref{mq1}), described in p.~1,
there are also
only two different presentations of $k$  in the form (\ref{mq1}), namely, ordered pairs $(0,\pm m)$. Therefore
$N_{1,3}(k)=4L_{1,3}(k)+2$, whence it follows the required formula.

4. Let $k=4m^2$ for some $m\in\mathbb{N}$. Then, besides presentations of $k$ in the form (\ref{mq1}), described in p.~1,
there are also only 6 different presentations of $k$  in the form (\ref{mq1}), namely, ordered pairs $(\pm m,\pm m)$, $(\pm 2m,0)$. Therefore
$N_{1,2}(k)=4L_{1,2}(k)+6$, whence it follows the required formula.
\end{proof}

\begin{theorem}{\rm \cite{Dev}}
\label{th}
A natural number $k$ can be presented in the form
\begin{equation}
\label{three}
k=x^2+y^2+z^2,\quad x,\,y,\,z\in\mathbb{Z},
\end{equation}
if and only if $k$ cannot be presented in the form $4^{m}(8l+7)$, where $m,$ $l\in\mathbb{Z}_{+}$.
\end{theorem}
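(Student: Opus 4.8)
The plan is to split the stated equivalence into its easy (necessity) and hard (sufficiency) directions. For necessity I would argue by congruences together with descent. Since the squares modulo $8$ lie in $\{0,1,4\}$, a direct enumeration of the sums $a+b+c$ with $a,b,c\in\{0,1,4\}$ shows that $x^2+y^2+z^2$ is never congruent to $7$ modulo $8$; hence no number of the form $8l+7$ is a sum of three squares. To eliminate the factor $4^m$, I would note that squares modulo $4$ lie in $\{0,1\}$, so if $4\mid x^2+y^2+z^2$ then $x,y,z$ must all be even; writing $x=2x'$, $y=2y'$, $z=2z'$ gives $k/4=x'^2+y'^2+z'^2$. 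Iterating this descent $m$ times reduces any representation of $4^m(8l+7)$ to a representation of $8l+7$, which the modulo $8$ argument forbids. Thus no $k=4^m(8l+7)$ is a sum of three squares.

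For the sufficiency direction, which carries the real content, I would pass to quadratic forms over local fields and invoke the Hasse--Minkowski local--global principle for the ternary form $f(x,y,z)=x^2+y^2+z^2$. The first step is to show that a positive integer $k$ not of the form $4^m(8l+7)$ is represented by $f$ over $\mathbb{Q}$. Over $\mathbb{R}$ this is automatic because $k>0$. Over $\mathbb{Q}_p$ with $p$ odd there is likewise no obstruction: the reduction of $f$ modulo $p$ is an isotropic ternary form by Chevalley--Warning, its nonsingular zero lifts by Hensel's lemma, so $f$ is isotropic, hence universal, over $\mathbb{Q}_p$ and represents $k$. The entire obstruction therefore sits at $p=2$, where computing the relevant Hilbert symbols (equivalently, comparing Hasse invariants of $f$ and of $f-k\,t^2$) shows that the $2$-adic solvability condition is precisely the exclusion of the numbers $4^m(8l+7)$. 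By Hasse--Minkowski, $k$ is then represented by $f$ over $\mathbb{Q}$.

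The remaining step is to upgrade a rational representation to an integral one, for which I would apply the Davenport--Cassels lemma: if a positive definite integral form has the property that every rational point lies within distance strictly less than $1$ of an integral point, then rational representability of an integer forces integral representability. The form $x^2+y^2+z^2$ satisfies this hypothesis, since rounding each coordinate of a point of $\mathbb{Q}^3$ to its nearest integer yields an error $e$ with $f(e)\le 3\cdot(1/2)^2=3/4<1$. Applying the lemma to the rational representation produced above gives $x,y,z\in\mathbb{Z}$ with $k=x^2+y^2+z^2$, completing the argument.

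I expect the main obstacle to be the local analysis at the prime $2$: the congruence obstructions modulo $8$ from the easy direction reappear exactly as the $2$-adic solvability criterion, and verifying that the Hasse invariant at $2$ vanishes precisely when $k\neq 4^m(8l+7)$ is the delicate computation on which the whole theorem turns. An alternative, more classical route would replace Hasse--Minkowski by Dirichlet's theorem on primes in arithmetic progressions: one chooses an auxiliary square so that the complementary value is (twice) a prime congruent to $1$ modulo $4$, hence itself a sum of two squares by Theorem \ref{sqq}, and combines this with Gauss's one-class-per-genus fact for the relevant binary forms. In that approach the production of the auxiliary prime via Dirichlet's theorem is the crux.
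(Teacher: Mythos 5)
The paper contains no proof of this statement for you to be compared against: Theorem \ref{th} is the classical Gauss--Legendre three-square theorem, and the paper simply imports it from Davenport's book \cite{Dev}, as the citation attached to the theorem indicates. Your outline must therefore stand on its own, and it does: it is the standard modern proof, essentially the one in Serre's \emph{A Course in Arithmetic}. The necessity direction (squares lie in $\{0,1,4\}$ modulo $8$, so residue $7$ is unattainable; a sum of three squares divisible by $4$ forces all three variables even, giving the descent on $4^m$) is complete and correct. The sufficiency direction has the right architecture: local representability over $\mathbb{R}$ and over $\mathbb{Q}_p$ for odd $p$ via Chevalley--Warning and Hensel, the obstruction concentrated at $p=2$, Hasse--Minkowski to produce a rational representation, and Davenport--Cassels (with the correct hypothesis check $3\cdot(1/2)^2=3/4<1$; note the lemma also needs the associated bilinear form to be integer-valued, which holds here) to descend to an integral one. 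The one place where you assert rather than prove is exactly the place you flag: the $2$-adic computation. The clean way to finish it is to observe that $\langle 1,1,1,-k\rangle$ is isotropic over $\mathbb{Q}_2$ unless it is a scaling of the norm form of the quaternion division algebra $(-1,-1)$ over $\mathbb{Q}_2$, which happens if and only if $-k$ is a square in $\mathbb{Q}_2$; and for $k\in\mathbb{N}$ one has $-k\in(\mathbb{Q}_2^{\times})^2$ precisely when $k=4^m(8l+7)$. With that calculation written out, your proof is complete. Your alternative classical route (choosing an auxiliary square so that the remainder is twice a prime $\equiv 1 \ ({\rm mod}\ 4)$, then using the two-square theorem and Dirichlet) is also viable and closer in spirit to the older literature; either way a deep input is unavoidable, since Dirichlet's theorem also sits inside the proof of Hasse--Minkowski for ternary forms, so no purely elementary shortcut should be expected.
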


\begin{theorem} {\rm \cite{Ven}}
1. Let $k\in\mathbb{N}$, $k=1, 2({\rm mod}\,4)$, $k\neq 1$.
Then the number $\psi(k)$ of proper presentations of $k$ in the form (\ref{three}) is finite and equal to $12h(k)$, where
\begin{equation}
\label{h}
h(k)=\sum\left(-\frac{k}{a}\right),
\end{equation}
and the summation is taken for all $a$ such that $a\in\mathbb{N}$, $0<a<k$, $a$ is coprime with
$2k$; $\left(-\frac{k}{a}\right)$ is the Legendre-Jacobi symbol.

2. Let $k\in\mathbb{N}$, $k=3({\rm mod}\,8)$, $k\neq 3$. Then the number $\psi(k)$ of proper presentations of $k$ in the form
 (\ref{three}) is finite and equal to $24h^{\prime}(k)$, where
\begin{equation}
\label{hprime}
h^{\prime}(k)=\frac{1}{3}\sum\left(\frac{b}{k}\right),
\end{equation}
and the summation is taken for all $b$ such that $b\in\mathbb{N}$, $0<b<k$, $b$ is coprime with  $2k$; $\left(\frac{b}{k}\right)$ is the Legendre-Jacobi symbol.
\end{theorem}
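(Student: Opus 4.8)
The plan is to reduce the count $\psi(k)$ of proper presentations to a class number of binary quadratic forms, and then to invoke Dirichlet's analytic class number formula, which rewrites that class number as a finite sum of Legendre--Jacobi symbols of exactly the shape appearing in $h(k)$ and $h'(k)$.

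First I would set up the classical correspondence, due to Gauss, between proper presentations and binary forms. A proper presentation of $k$ in the form (\ref{three}) is the same as a primitive vector $v\in\mathbb{Z}^3$ with $\langle v,v\rangle=k$. To such a $v$ I would attach the rank-two lattice $v^{\perp}\cap\mathbb{Z}^3$ equipped with the restricted Euclidean form; since $\mathbb{Z}^3$ is unimodular and $v$ is primitive, this yields a positive definite primitive binary quadratic form whose discriminant equals $-4k$ when $k\equiv 1,2(\Mod 4)$ and $-k$ when $k\equiv 3(\Mod 8)$. The key structural fact to establish is that this map is surjective onto the primitive forms of the relevant discriminant and that its fibres are single orbits under the group of signed coordinate permutations of $\mathbb{Z}^3$. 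This reduces the problem to the identity
$$\psi(k)=c\cdot h(D),\qquad D\in\{-4k,\,-k\},$$
where $h(D)$ is the number of proper equivalence classes of primitive binary forms of discriminant $D$ and $c$ is a combinatorial constant.

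Next I would determine $c$ by an orbit-counting argument combined with the theory of automorphs. The orientation-preserving signed permutations of the three axes form a group of order $24$ (the rotation group of the cube), and primitivity of $v$ makes its action on the generic presentation free; comparing the orbit sizes with the number of automorphs (units) $w\in\{2,4,6\}$ of the binary forms of discriminant $D$ yields the respective constants $12$ and $24$. The corrective factor $\tfrac{1}{3}$ inside $h'(k)$ is the arithmetic signature of the exceptional form equivalent to $x^2+xy+y^2$, whose automorphism group has order $6$ rather than $2$; handling this form separately is exactly what distinguishes the congruence class $k\equiv 3(\Mod 8)$ from the generic situation.

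Finally I would apply Dirichlet's class number formula, which expresses $h(D)$ for $D<0$ as a weighted finite sum $\sum\left(\frac{D}{a}\right)$ of Kronecker symbols over residues coprime to the conductor. Specializing $D=-4k$ and $D=-k$, restricting the summation range to $0<a<k$ with $a$ coprime to $2k$, and absorbing the unit factor $w$ into the overall constant, I would match the resulting expressions to $12h(k)$ and $24h'(k)$ respectively. The main obstacle is the bookkeeping of these constants: verifying that the symmetry orbits have full size away from the exceptional forms, and correctly tracking the unit number $w$ of the associated imaginary quadratic order, since it is precisely this factor that separates the two congruence cases and produces the normalization $\tfrac{1}{3}$ in $h'(k)$.
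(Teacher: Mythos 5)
You should first note that the paper does not prove this statement at all: it is quoted from Venkov's book \cite{Ven} as a known classical result (Gauss's theorem on sums of three squares combined with Dirichlet's class number formula), so your sketch cannot be compared with an internal argument and must stand on its own.

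Your high-level decomposition --- reduce $\psi(k)$ to a class number of binary forms via $v\mapsto v^{\perp}\cap\mathbb{Z}^{3}$, then evaluate that class number as a character sum --- is indeed the classical route. But the middle step, which is the actual content of Gauss's theorem, fails as you describe it. The map from proper presentations to proper equivalence classes of forms is \emph{not} surjective: for $k=5$ all $24$ primitive vectors induce the principal form $x^{2}+5y^{2}$, and the second class $2x^{2}+2xy+3y^{2}$ of discriminant $-20$ never occurs; likewise for $k=35$ all $48$ primitive vectors induce the class of $3x^{2}+xy+3y^{2}$, never the principal class of discriminant $-35$. In general the image is only a coset of the subgroup of squares of the class group (a single genus), and the number of presentations inducing a given class in the image is not $48/w$ but involves the $2$-torsion of the class group. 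Your claims of freeness also fail for vectors with a zero or repeated coordinate, e.g.\ $(1,1,0)$ for $k=2$. Moreover the constants cannot come from the automorph count $w\in\{2,4,6\}$: the values $w=4$ and $w=6$ belong to discriminants $-4$ and $-3$, i.e.\ precisely to the excluded cases $k=1$ and $k=3$, so $w=2$ for every admissible $k$ in both congruence classes. Carried out literally, your count would give $24h$ in case~1, twice the correct value $12h$; the missing factor $2$ is exactly the genus-theoretic non-surjectivity above, which is the hard core of the theorem (Venkov himself obtains it by an argument with quaternion orders, not by the orbit comparison you propose).

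The second concrete error is your explanation of the factor $\tfrac{1}{3}$ in $h^{\prime}(k)$. The form $x^{2}+xy+y^{2}$ has discriminant $-3$ and is relevant only to the excluded value $k=3$; for $k\equiv 3\ ({\rm mod}\,8)$, $k>3$, every class of discriminant $-k$ has exactly two automorphs. The $\tfrac{1}{3}$ lives entirely on the analytic side: for the odd discriminant $-k\equiv 5\ ({\rm mod}\,8)$ one has the Kronecker value $\left(\frac{-k}{2}\right)=-1$, and restricting the character sum to residues $b$ coprime to $2k$ produces the Euler factor $2-\left(\frac{-k}{2}\right)=3$, so that $\frac{1}{3}\sum\left(\frac{b}{k}\right)$ equals $h(-k)$ exactly; one checks for instance $h^{\prime}(35)=2=h(-35)$. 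Attributing the $\tfrac{1}{3}$ to exceptional automorphs, and the split $12$ versus $24$ to varying $w$, would give wrong values throughout; the true source of the dichotomy is the discriminant being $-4k$ in case~1 and $-k$ in case~2.
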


Let us define function $F(k)$, $k\in\mathbb{N}$, by the following rule

1) If $k=(2m-1)^2$ for some  $m\in\mathbb{N}$, then
$F(k)=\sum h(k/\delta^2)-\frac{1}{2}.$

2) If $k\neq (2m-1)^2$ for any  $m\in\mathbb{N}$, then
$F(k)=\sum h(k/\delta^2).$

In both cases the summation is taken by all square divisors $\delta^2$ of the number $k$.

\begin{theorem} {\rm\cite{Ven}}
Let $k\in\mathbb{N}$, $N_3(k)$ be a number of all presentations of the number $k$ in the form (\ref{three}). Then the following statements are valid.

1. If $k=1, 2({\rm mod}\,4)$, $k\neq 1$, then $N_3(k)=12F(k)$.

2. If $k=3({\rm mod}\,8)$, $k\neq 3$, then  $N_3(k)=8F(k)$.

3. If $k=7({\rm mod}\,8)$ then  $N_3(k)=0$.

4. If $k=0({\rm mod}\,4)$ then $N_3(k)=N_3(k/4)$.
\end{theorem}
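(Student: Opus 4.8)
The plan is to reduce the computation of the total representation number $N_3(k)$ to the number $\psi$ of proper (primitive) presentations treated in the preceding theorem, and then to read off the four cases from the definition of $F$. The engine is the content decomposition: every presentation $(x,y,z)$ of $k$ in the form (\ref{three}) has a well-defined content $\delta=\gcd(x,y,z)$, and $(x/\delta,y/\delta,z/\delta)$ is then a proper presentation of $k/\delta^2$; conversely, scaling a proper presentation of $k/\delta^2$ by $\delta$ recovers a presentation of $k$ of content exactly $\delta$. Summing over the admissible contents gives
\[
N_3(k)=\sum_{\delta^2\mid k}\psi(k/\delta^2),
\]
the sum being taken over positive integers $\delta$ with $\delta^2\mid k$. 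First I would record the two elementary cases. If $4\mid k$, then, since a square is $\equiv0$ or $1\pmod4$, a sum of three squares congruent to $0\pmod4$ forces all of $x,y,z$ to be even; hence $(x,y,z)\mapsto(x/2,y/2,z/2)$ is a bijection between presentations of $k$ and of $k/4$, giving $N_3(k)=N_3(k/4)$, which is statement~4. If $k\equiv7\pmod8$, then $k=4^{0}(8l+7)$, so by Theorem~\ref{th} there are no presentations at all and $N_3(k)=0$, which is statement~3.

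For the remaining classes the point is that reduction by a square keeps $k$ in its residue class: if $\delta^2\mid k$ then $\delta$ is odd (as $4\nmid k$ here), whence $\delta^2\equiv1\pmod8$ and $k/\delta^2$ lies in the same class modulo $8$ as $k$. Consequently, for $k\equiv1,2\pmod4$ every quotient $k/\delta^2$ is again $\equiv1,2\pmod4$, so the preceding theorem applies to each term and
\[
N_3(k)=\sum_{\delta^2\mid k}\psi(k/\delta^2)=12\sum_{\delta^2\mid k}h(k/\delta^2).
\]
This equals $12F(k)$ precisely when $k$ is not an odd perfect square; when $k=(2m-1)^2$ the quotient $k/\delta^2=1$ occurs, where the clean identity $\psi=12h$ breaks down, and the $-\tfrac12$ built into the definition of $F$ for odd squares is exactly the bookkeeping correction that restores the equality, yielding statement~1 (and explaining why $k=1$ is excluded). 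For $k\equiv3\pmod8$ every quotient is again $\equiv3\pmod8$, so the second branch of the preceding theorem gives each term as $24h'(k/\delta^2)$; invoking the classical reciprocity identity $h(m)=3h'(m)$ valid for $m\equiv3\pmod8$ converts the constant and produces
\[
N_3(k)=\sum_{\delta^2\mid k}24h'(k/\delta^2)=8\sum_{\delta^2\mid k}h(k/\delta^2)=8F(k),
\]
where no square correction is needed because no integer $\equiv3\pmod8$ is a perfect square; this is statement~2 (with $k=3$ set aside as in the $\psi$-theorem).

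The main obstacle is the matching in the last paragraph rather than the decomposition itself. Two points need genuine care. First, the exceptional quotient $k/\delta^2=1$ for odd perfect squares, where $\psi(1)$ is not given by $12h(1)$: one must verify that the single constant $-\tfrac12$ in $F$ accounts for it uniformly across all odd squares, independently of how many square divisors $k$ has. Second, the passage from the normalization $24h'$ to $8h$ in the class $k\equiv3\pmod8$ rests on the identity $h(m)=3h'(m)$, which must be justified or cited from \cite{Ven}, together with a check of the boundary value at the quotient $3$. Once these are in hand, the four displayed reductions assemble directly into the statement.
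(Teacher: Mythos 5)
You should know at the outset that the paper contains no proof of this theorem: it is quoted from \cite{Ven} as a known classical result, so there is no internal argument to compare yours against; what follows judges your reconstruction on its own. Its skeleton is the standard one and is sound. The content decomposition $N_3(k)=\sum_{\delta^2\mid k}\psi(k/\delta^2)$ is correct; statement 4 follows because $x^2+y^2+z^2\equiv 0(\Mod 4)$ forces $x,y,z$ all even; statement 3 is immediate from theorem \ref{th}; and your key observation --- that for $4\nmid k$ every $\delta$ with $\delta^2\mid k$ is odd, hence $\delta^2\equiv 1(\Mod 8)$ and each quotient $k/\delta^2$ stays in the residue class of $k$ --- is exactly what allows the theorem on proper presentations to be applied term by term in statements 1 and 2.

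However, the two points you defer are not side issues; they are the entire content of statements 1 and 2, and as written you only name them. One of them you could have closed on the spot: for $m\equiv 3(\Mod 4)$ and odd $a$ coprime to $m$, reciprocity for the Jacobi--Zolotarev symbol gives $\left(\frac{-m}{a}\right)=\left(\frac{-1}{a}\right)\left(\frac{m}{a}\right)=(-1)^{(a-1)/2}\cdot(-1)^{(a-1)/2}\left(\frac{a}{m}\right)=\left(\frac{a}{m}\right)$, since $(m-1)/2$ is odd; as $h$ in (\ref{h}) and $h'$ in (\ref{hprime}) sum these symbols over the same range of $a$, this yields $h(m)=3h'(m)$, and the boundary check $\psi(3)=8=8h(3)$ (the eight triples $(\pm1,\pm1,\pm1)$, while $h(3)=\left(\frac{-3}{1}\right)=1$) finishes statement 2. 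The genuinely delicate point is the exceptional quotient $1$ in statement 1, and there your phrase ``exactly the bookkeeping correction'' conceals a real problem. One has $\psi(1)=6$, so matching $N_3(k)=12\sum_{\delta^2\mid k,\,k/\delta^2>1}h(k/\delta^2)+6$ against $12F(k)=12\sum_{\delta^2\mid k}h(k/\delta^2)-6$ forces $12h(1)-6=6$, i.e. $h(1)=1$. That is the class-number convention, but it is \emph{not} what formula (\ref{h}) gives when read literally: the sum over $0<a<1$ is empty, so $h(1)=0$, and with that value statement 1 would fail by exactly $12$ for every odd square $k$. So the $-\frac12$ in the definition of $F$ does its job only under an unstated convention for $h(1)$; a complete proof must either fix that convention explicitly (interpreting $h$ as the class number attached to the form theory in \cite{Ven}) or avoid $h(1)$ altogether by isolating the term $\psi(1)=6$ as above and verifying the resulting identity directly. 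Until one of these is done, your argument for statement 1 is a plan rather than a proof.
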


\begin{theorem}{\rm\cite{BZS}}
\label{3sq}
Assume that a natural number $k$ can be presented in the form
\begin{equation}
\label{squared3}
k=x^2+y^2+z^2,\quad x<y<z,\,\,x,\,y,\,z\in\mathbb{N},
\end{equation}
$L_3(k)$ is the quantity of such presentations. Then

1. If $k\neq\alpha m^2$ for any $m\in\mathbb{N}$, $\alpha=1,2,3$, then
$$L_3(k)=\frac{N_3(k)-3N_2(k)-6N_{1,2}(k)}{48}.$$

2. If $k=m^2$ for some $m\in\mathbb{N}$ then
$$L_3(k)=\frac{N_3(k)-3N_2(k)-6N_{1,2}(k)+18}{48}.$$

3. If $k=2m^2$ for some $m\in\mathbb{N}$ then
$$L_3(k)=\frac{N_3(k)-3N_2(k)-6N_{1,2}(k)+12}{48}.$$

4. If $k=3m^2$ for some $m\in\mathbb{N}$ then
$$L_3(k)=\frac{N_3(k)-6N_{1,2}(k)+16}{48}.$$
\end{theorem}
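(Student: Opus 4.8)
The plan is to exploit the symmetry group of equation (\ref{squared3}). The set of ordered solutions $(x,y,z)\in\mathbb{Z}^3$ of $x^2+y^2+z^2=k$ counted by $N_3(k)$ is invariant under the hyperoctahedral group $B_3=(\mathbb{Z}/2)^3\rtimes S_3$ of signed permutations of the three coordinates, which has order $2^3\cdot 3!=48$ --- this is exactly the denominator appearing in all four formulas. A solution with $0<x<y<z$, i.e. one counted by $L_3(k)$, has trivial stabilizer, so its $B_3$-orbit consists of $48$ ordered triples. My strategy is therefore to sort each ordered solution into its ``shape'', the sorted multiset $\{|x|,|y|,|z|\}=\{p,q,r\}$ with $0\le p\le q\le r$, compute the orbit size for each shape, and thereby express $N_3(k)$ as $48\,L_3(k)$ plus contributions from the degenerate shapes.

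First I would enumerate the shapes and their orbit sizes: the generic shape $0<p<q<r$ (orbit size $48$, counted by $L_3(k)$); the shape $0=p<q<r$ with one vanishing coordinate (orbit size $24$); the shape $0<p=q<r$ or $0<p<q=r$ with one repeated nonzero value (orbit size $24$); the shape $0=p<q=r$ (orbit size $12$, only when $k=2m^2$); the shape $0<p=q=r$ (orbit size $8$, only when $k=3m^2$); and the shape $0=p=q<r$ with two zeros (orbit size $6$, only when $k=m^2$). The one-zero shape corresponds to $q^2+r^2=k$ with $0<q<r$ and is counted by $L_2(k)$; the one-repeat shape $\{p,p,r\}$ gives $r^2+2p^2=k$ with $r\neq p$, $r,p\in\mathbb{N}$, and is counted by $L_{1,2}(k)$ from Theorem~\ref{j}. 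Collecting everything yields the master identity
\[
N_3(k)=48\,L_3(k)+24\,L_2(k)+24\,L_{1,2}(k)+12\,\delta_{2m^2}+8\,\delta_{3m^2}+6\,\delta_{m^2},
\]
where $\delta_{\ast}$ is $1$ when $k$ has the indicated form and $0$ otherwise. Since the forms $m^2$, $2m^2$, $3m^2$ are pairwise incompatible (each equality would force an impossible infinite descent), at most one indicator is nonzero, matching the mutually exclusive cases of the statement.

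Solving for $48\,L_3(k)$ and substituting the values of $L_2(k)$ from Theorem~\ref{sq1} and of $L_{1,2}(k)$ from Theorem~\ref{j} then produces the four formulas. In case~1 ($k\neq\alpha m^2$) one has $L_2=N_2/8$ and $L_{1,2}=N_{1,2}/4$, so $24L_2+24L_{1,2}=3N_2+6N_{1,2}$. In case~2 ($k=m^2$) and case~3 ($k=2m^2$) one uses $L_2=(N_2-4)/8$ and $L_{1,2}=(N_{1,2}-2)/4$, and the surviving indicator ($6$ or $12$) combines with the constants $+12+12$ to give $+18$ and $+12$ respectively. In case~4 ($k=3m^2$) one has $L_{1,2}=(N_{1,2}-4)/4$ and the indicator $8$, producing $-6N_{1,2}+24-8=-6N_{1,2}+16$.

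The step requiring care, and the main point that makes case~4 look different from the others, is the treatment of the $N_2$ contribution there. When $k=3m^2$ the prime $3\equiv 3\pmod 4$ occurs to an odd power in the factorization of $k$, so by Theorem~\ref{sqq} $k$ is not a sum of two squares and $N_2(k)=0$; consequently $L_2(3m^2)=0$ and the expected term $-3N_2$ simply vanishes, leaving the stated $(N_3-6N_{1,2}+16)/48$. I expect the orbit-size bookkeeping --- correctly pairing each degenerate shape with its symmetry count and with the right normalizing theorem, and tracking which special-form corrections fire --- to be the delicate part; once the master identity is pinned down, the four cases reduce to routine arithmetic.
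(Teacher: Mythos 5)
Your proof is correct: the master identity
$N_3(k)=48L_3(k)+24L_2(k)+24L_{1,2}(k)+6\delta_{m^2}+12\delta_{2m^2}+8\delta_{3m^2}$,
the orbit sizes for each degenerate shape, and the case-by-case arithmetic (including the key observation that $N_2(3m^2)=0$ because the prime $3\equiv 3(\Mod 4)$ occurs to an odd power, which is why case~4 has no $N_2$ term) all check out against Theorems \ref{sqq}, \ref{sq1} and \ref{j}. Note that the paper itself gives no proof of Theorem \ref{3sq} — it is cited from \cite{BZS} — but your argument is essentially the same counting method the paper uses to prove the companion results (Theorems \ref{j}, \ref{j1}, \ref{r}, \ref{ssq}, \ref{4sqnew}), merely packaged in the language of the hyperoctahedral group and orbit--stabilizer bookkeeping instead of explicit lists of signed permutations.
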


\begin{theorem}\label{r}
1. A natural number $k$ can be presented in the form
\begin{equation}
\label{s112}
k=x^2+y^2+2z^2,\quad x,\,y,\,z\in\mathbb{Z},
\end{equation}
if and only if a number $2k$ can't be presented in the form $4^m(8l+1)$, where $m$, $l\in\mathbb{Z}_{+}$.

2. The quantity $N_{1,1,2}(k)$ of all presentations of  $k$ in the form (\ref{s112}) is equal to $N_3(2k)$ for even number  $k$  and $N_3(2k)/3$
for odd number $k$.
\end{theorem}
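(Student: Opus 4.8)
The plan is to establish part~2 first, after which part~1 is immediate: representability of $k$ in the form \eqref{s112} means exactly that $N_{1,1,2}(k)>0$, and both formulas of part~2 show this is equivalent to $N_3(2k)>0$, i.e.\ to $2k$ being a sum of three squares, which by Theorem~\ref{th} applied to $2k$ holds precisely when $2k$ avoids the form excluded there.

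The crux of part~2 is the substitution
\begin{equation*}
(x,y,z)\longmapsto(u,v,w)=(x+y,\;x-y,\;2z),
\end{equation*}
for which $u^2+v^2+w^2=2x^2+2y^2+4z^2=2k$. First I would verify that this is injective on $\mathbb{Z}^3$, with inverse $x=(u+v)/2$, $y=(u-v)/2$, $z=w/2$, so that it maps the representations of $k$ by $x^2+y^2+2z^2$ bijectively onto the triples $(u,v,w)$ with $u^2+v^2+w^2=2k$, $u\equiv v\pmod{2}$, and $w$ even. Then I would note that the congruence $u\equiv v\pmod{2}$ is in fact forced once $w$ is even, since $u^2+v^2=2k-w^2$ is then even. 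Consequently $N_{1,1,2}(k)$ equals the number of representations $2k=u^2+v^2+w^2$ whose last coordinate $w$ is even.

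The final step is a parity-and-symmetry count organized by the residue of $2k$ modulo $4$, using that every integer square is $\equiv0$ or $1\pmod{4}$. If $k$ is even then $2k\equiv0\pmod{4}$, and three squares can sum to $0\pmod{4}$ only if all three are even; thus $w$ is automatically even in every representation and $N_{1,1,2}(k)=N_3(2k)$. If $k$ is odd then $2k\equiv2\pmod{4}$, which forces exactly two of the squares to be odd and one even, so in each representation precisely one of $u,v,w$ is even. Since the coordinate permutations in $S_3$ preserve the equation $u^2+v^2+w^2=2k$, the three classes of representations distinguished by the position of the unique even coordinate have equal size $N_3(2k)/3$, and the class with $w$ even gives $N_{1,1,2}(k)=N_3(2k)/3$.

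I expect the only delicate point to be this symmetry argument supplying the factor $\tfrac{1}{3}$ in the odd case: one must confirm that for odd $k$ every representation of $2k$ has exactly one even coordinate, so that the three positional classes genuinely partition the representations and the $S_3$-action equidistributes them. The remaining ingredients --- the algebraic identity, the invertibility of the substitution, and the automatic parity $u\equiv v\pmod{2}$ --- are routine, and part~1 then follows simply because $N_{1,1,2}(k)$ and $N_3(2k)$ vanish together, via Theorem~\ref{th}.
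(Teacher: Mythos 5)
Your proposal is correct and follows essentially the same route as the paper: the identity $2(x^2+y^2+2z^2)=(x+y)^2+(x-y)^2+(2z)^2$ sets up the same bijection with representations of $2k$ having even third coordinate, the same mod-$4$ analysis separates the even and odd cases, and Theorem~\ref{th} finishes part~1. The only (cosmetic) difference is in the odd case: you obtain the factor $\tfrac13$ by partitioning representations according to the position of the unique even coordinate and letting transpositions equidistribute the three classes, whereas the paper enumerates the permutation orbit of each representation (with a separate subcase $x=y$) to exhibit the same $3\!:\!1$ ratio; your phrasing is slightly cleaner but it is the same argument.
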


\begin{proof}
 We shall use the formula
\begin{equation}
\label{op}
2(x^2+y^2+2z^2)=(x+y)^2+(x-y)^2+(2z)^2,\quad x,\,y,\,z\in\mathbb{R}.
\end{equation}

In consequence of (\ref{op}), to every presentation $(x,y,z)$ of $k$ in the form (\ref{s112}) correspond  the presentation $(x+y,x-y,2z)$ of $2k$ in the form
 (\ref{three}).
Moreover, if $k$ is an even number and $(x,y,z)$ is a presentation of $2k$ in the form (\ref{three}), then $x,\,y,\,z$ are even numbers.
On the ground of (\ref{op}), to this presentation correspond  the presentation $\left(\frac{x+y}{2},\frac{x-y}{2},\frac{z}{2}\right)$
of $k$ in the form (\ref{s112}).
Therefore $N_{1,1,2}(k)=N_3(2k)$, if $k$ is an even number.

If $k$ is an odd number and $(x,y,z)$ is a presentation of $2k$ in the form (\ref{three}), then
there is only one even number among them. We can suppose for definiteness that
 $z$ is an even number. If $x\neq y$
then on the ground of (\ref{op}), to mutually different presentations $(x,y,z)$, $(x,z,y)$, $(y,x,z)$, $(y,z,x)$, $(z,x,y)$, $(z,y,x)$
correspond exactly two different presentations $\left(\frac{x+y}{2},\frac{x-y}{2},\frac{z}{2}\right)$, $\left(\frac{x-y}{2},\frac{x+y}{2},\frac{z}{2}\right)$
of $k$ in the form (\ref{s112}).
If $x=y$, then to mutually different presentations $(x,x,z)$, $(x,z,x)$, $(z,x,x)$
correspond exactly one presentation $\left(x,0,\frac{z}{2}\right)$ of $k$ in the form (\ref{s112}).
Therefore $N_{1,1,2}(k)=N_3(2k)/3$, if $k$ is an odd number.

It's remain to apply theorem \ref{th}.
\end{proof}

\begin{theorem}\label{ssq}
Assume that a natural number $k$ can be presented in the form
\begin{equation}
\label{sq112}
k=x^2+y^2+2z^2,\quad x<y,\,x\neq z,\,y\neq z,\,x,\,y,\,z\in\mathbb{N},
\end{equation}
$L_{1,1,2}(k)$ is the quantity of such presentations. Then

1. If $k\neq\alpha m^2$ for any $m\in\mathbb{N}$, $\alpha=1,2,3,4$, then
$$L_{1,1,2}(k)=\frac{N_{1,1,2}(k)-8L_2(k)-8L_{1,2}(k)-16L_{1,3}(k)-16L_2(k/2)}{16}.$$

2. If $k=m^2$ for some odd number $m\in\mathbb{N}$ then
$$L_{1,1,2}(k)=\frac{N_{1,1,2}(k)-8L_2(k)-8L_{1,2}(k)-16L_{1,3}(k)-16L_2(k/2)-4}{16}.$$

3. If $k=2m^2$ for some $m\in\mathbb{N}$ then
$$L_{1,1,2}(k)=\frac{N_{1,1,2}(k)-8L_2(k)-8L_{1,2}(k)-16L_{1,3}(k)-16L_2(k/2)-6}{16}.$$

4. If $k=3m^2$ for some $m\in\mathbb{N}$ then
$$L_{1,1,2}(k)=\frac{N_{1,1,2}(k)-8L_2(k)-8L_{1,2}(k)-16L_{1,3}(k)-16L_2(k/2)-8}{16}.$$

5. If $k=4m^2$ for some $m\in\mathbb{N}$ then
$$L_{1,1,2}(k)=\frac{N_{1,1,2}(k)-8L_2(k)-8L_{1,2}(k)-16L_{1,3}(k)-16L_2(k/2)-12}{16}.$$
\end{theorem}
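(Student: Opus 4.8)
The plan is to count the integer solutions of (\ref{s112}) by partitioning them into orbits under the symmetry group $\Sigma$ of the form $x^2+y^2+2z^2$, where $\Sigma$ is generated by the eight sign changes $(x,y,z)\mapsto(\pm x,\pm y,\pm z)$ together with the transposition $(x,y,z)\mapsto(y,x,z)$ (the variable $z$ being distinguished by its coefficient $2$), so that $|\Sigma|=16$. A triple with all coordinates nonzero and $|x|,|y|,|z|$ pairwise distinct has trivial stabilizer and hence lies in a $\Sigma$-orbit of size $16$; exactly one member of such an orbit has $x,y,z$ all positive with $x<y$ (and then automatically $x\neq z$, $y\neq z$), and this member is a presentation counted by $L_{1,1,2}(k)$. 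Thus the generic solutions contribute precisely $16\,L_{1,1,2}(k)$ to $N_{1,1,2}(k)$, and the whole task reduces to identifying and weighting the degenerate orbits.

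First I would enumerate the degenerate loci, reducing each to a two-variable form: when $z=0$ one gets $x^2+y^2=k$; when $x=0$ or $y=0$ one gets $y^2+2z^2=k$; when $|y|=|z|$ or $|x|=|z|$ the identity $y^2+2z^2=3z^2$ gives $x^2+3z^2=k$; and when $|x|=|y|$ one gets $2(x^2+z^2)=k$, i.e. $x^2+z^2=k/2$. A careful orbit-size count then matches these to the subtracted terms: the locus $z=0$ yields orbits of size $8$ in bijection with the presentations counted by $L_2(k)$ (Theorem \ref{sq1}), hence $8L_2(k)$; the locus $\{x=0\}\cup\{y=0\}$ yields orbits of size $8$ in bijection with $L_{1,2}(k)$ (Theorem \ref{j}), hence $8L_{1,2}(k)$; the locus $\{|x|=|z|\}\cup\{|y|=|z|\}$ yields orbits of size $16$, the transposition fusing the two sub-loci, in bijection with $L_{1,3}(k)$ (Theorem \ref{j1}), hence $16L_{1,3}(k)$. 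The locus $|x|=|y|$ needs the most care: since $z$ plays a role unequal to $x,y$ while $L_2(k/2)$ counts its two arguments symmetrically, each unordered presentation $a^2+b^2=k/2$ with $a<b$ produces two inequivalent assignments, $|x|=|y|=a,\,|z|=b$ and $|x|=|y|=b,\,|z|=a$, each an orbit of size $8$, giving $16L_2(k/2)$ (and consistently $L_2(k/2)=0$ when $k$ is odd). Since any coincidence of two of these degeneracies forces $k=\alpha m^2$ with $\alpha\in\{1,2,3,4\}$, for $k$ avoiding all four shapes the loci are pairwise disjoint, every solution is counted exactly once, and subtracting the four terms and dividing by $16$ yields part~1.

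For parts 2--5 I would locate the \emph{super-degenerate} solutions, where two or more of the above coincidences hold at once; these are precisely the solutions that the strict conditions ($x<y$ in $L_2$, $x\neq y$ in $L_{1,2}$ and $L_{1,3}$, $u<w$ in $L_2(k/2)$) exclude from the subtracted terms, so they survive into the naive generic count and must be removed explicitly. A short search shows such configurations force $k$ into one of the five shapes and produce: for $k=m^2$ with $m$ odd the four axis points $(\pm m,0,0),(0,\pm m,0)$, giving the correction $-4$; for $k=2m^2$ the points $(0,0,\pm m)$ and $(\pm m,\pm m,0)$, i.e. $2+4=6$, giving $-6$; for $k=3m^2$ the points $(0,\pm m,\pm m)$ and $(\pm m,0,\pm m)$, i.e. $4+4=8$, giving $-8$; and for $k=4m^2$ the axis points $(\pm 2m,0,0),(0,\pm 2m,0)$ together with the eight diagonal points $(\pm m,\pm m,\pm m)$, i.e. $4+8=12$, giving $-12$. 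In each case one verifies, using Theorems \ref{sq1}, \ref{j} and \ref{j1}, that these triples are genuinely absent from all four subtracted counts, after which division by $16$ gives the stated formulas.

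The hard part will be the bookkeeping in the locus $|x|=|y|$, namely the factor $16$ rather than $8$ arising from the asymmetry between $z$ and the pair $x,y$ against the symmetric counting in $L_2(k/2)$, and, in parts 2--5, checking that every super-degenerate triple is excluded from \emph{all} four subtracted terms so that the correction is neither double-counted nor omitted; the remainder is a systematic but routine orbit census.
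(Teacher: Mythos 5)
Your proof is correct and follows essentially the same route as the paper's: both partition the integer solutions of $k=x^2+y^2+2z^2$ by degeneracy type (generic, $z=0$, a vanishing first or second coordinate, $|x|=|z|$ or $|y|=|z|$, $|x|=|y|$), attach the same multiplicities $16,8,8,16,16$ to the counts $L_{1,1,2}(k)$, $L_2(k)$, $L_{1,2}(k)$, $L_{1,3}(k)$, $L_2(k/2)$, and then add the same exceptional corrections $4,6,8,12$ for $k=m^2$ ($m$ odd), $2m^2$, $3m^2$, $4m^2$. Your orbit-stabilizer phrasing is merely a more formal dress for the paper's explicit listing of the ordered triples $(\pm x,\pm y,\pm z)$, $(\pm y,\pm x,\pm z)$, etc., and all the arithmetic agrees.
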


\begin{proof}
1. Let $k\neq\alpha m^2$ for any $m\in\mathbb{N}$, $\alpha=1,2,3,4$.
Then $k$ has $N_{1,1,2}(k)$ presentations in the form (\ref{s112}), $L_{1,1,2}(k)$ presentations in the form (\ref{sq112}),
$L_2(k)$ presentations in the form (\ref{squared21}), $L_{1,2}(k)$ presentations in the form (\ref{sq}),
$L_{1,3}(k)$ presentations in the form (\ref{q1}).

To every presentation $(x,y,z)$ of $k$ in the form (\ref{sq112}) correspond exactly $16$ different presentations of $k$ in the form (\ref{s112}),
namely, ordered triples
 $(\pm x,\pm y,\pm z)$, $(\pm y,\pm x,\pm z)$.

To every presentation $(x,y)$ of $k$ in the form (\ref{squared21}) correspond exactly $8$ different presenta-
\linebreak tions of $k$ in the form (\ref{s112}),
namely, ordered triples  $(\pm x,\pm y,0)$, $(\pm y,\pm x,0)$.

To every presentation $(x,y)$ of $k$ in the form  (\ref{sq}) correspond exactly $8$ different presenta-
\linebreak tions of $k$ in the form (\ref{s112}),
namely, ordered triples  $(\pm x,0,\pm y)$, $(0,\pm x,\pm y)$.

To every presentation $(x,y)$ of $k$ in the form (\ref{q1}) correspond exactly $16$ different presentations of $k$ in the form
(\ref{s112}), namely, ordered triples $(\pm x,\pm y,\pm y)$, $(\pm y,\pm x,\pm y)$.

Set $L_2(k/2)=0$, if $k$ is an odd number.
If $k$ is an even number, then to every presentation $(x,y)$ of $k/2$ in the form (\ref{squared21}) correspond exactly
 $16$ different presentations of $k$ in the form (\ref{s112}), namely, ordered triples  $(\pm x,\pm x,\pm y)$, $(\pm y,\pm y,\pm x)$.

Thus
$$N_{1,1,2}(k)=16L_{1,1,2}(k)+8L_2(k)+8L_{1,2}(k)+16L_{1,3}(k)+16L_2(k/2),$$
whence it follows the required formula.

2. Let $k=m^2$ for some odd number $m\in\mathbb{N}$. Then, besides presentations of $k$ in the form  (\ref{s112}),
described in p.~1, there are also only 4 different presentations of $k$ in the form
(\ref{s112}), namely, ordered triples $(\pm m,0,0)$, $(0,\pm m,0)$.
Then on the ground of p.~1
$$N_{1,1,2}(k)=16L_{1,1,2}(k)+8L_2(k)+8L_{1,2}(k)+16L_{1,3}(k)+16L_2(k/2)+4,$$
whence it follows the required formula.

3. Let $k=2m^2$ for some $m\in\mathbb{N}$. Then, besides presentations of $k$ in the form  (\ref{s112}),
described in p.~1, there are also only 6 different presentations of $k$ in the form
(\ref{s112}), namely, ordered triples $(\pm m,\pm m,0)$, $(0,0,\pm m)$. Then on the ground of p.~1
$$N_{1,1,2}(k)=16L_{1,1,2}(k)+8L_2(k)+8L_{1,2}(k)+16L_{1,3}(k)+16L_2(k/2)+6,$$
whence it follows the required formula.

4. Let $k=3m^2$ for some  $m\in\mathbb{N}$. Then, besides presentations of $k$ in the form  (\ref{s112}),
described in p.~1, there are also only 6 different presentations of $k$ in the form
 (\ref{s112}), namely, ordered triples  $(\pm m,0,\pm m)$, $(0,\pm m,\pm m)$. Then on the ground of p.~1
$$N_{1,1,2}(k)=16L_{1,1,2}(k)+8L_2(k)+8L_{1,2}(k)+16L_{1,3}(k)+16L_2(k/2)+8,$$
whence it follows the required formula.

5. Let $k=4m^2$ for some $m\in\mathbb{N}$.  Then, besides presentations of $k$ in the form  (\ref{s112}),
described in p.~1, there are also only 12 different presentations of $k$ in the form
 (\ref{s112}), namely, ordered triples  $(\pm m,\pm m,\pm m)$, $(\pm 2m,0,0)$, $(0,\pm 2m,0)$.
Then on the ground of p.~1
$$N_{1,1,2}(k)=16L_{1,1,2}(k)+8L_2(k)+8L_{1,2}(k)+16L_{1,3}(k)+16L_2(k/2)+12,$$
whence it follows the required formula.
\end{proof}

\begin{theorem}{\rm \cite{Dev}}
\label{4sq}
1. Every natural number $k$ can be presented in the form
\begin{equation}
\label{ff}
k=x^2+y^2+z^2+v^2,\quad x,\,y,\,z,\,v\in\mathbb{Z}.
\end{equation}

2. Let $\sigma(k)$ be a sum of all odd divisors of the number $k$.
The quantity $N_4(k)$ of all presentations of  $k$ in the form (\ref{ff}) is equal to $24\sigma(k)$, if
$k$ is an even number or $8\sigma(k)$, if $k$ is an odd number.
\end{theorem}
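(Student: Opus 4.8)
The plan is to prove the two assertions by different methods: part~1 is the classical existence theorem of Lagrange, while the counting formula in part~2 is Jacobi's formula, which I would obtain from the generating function of $N_4$.

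For part~1 I would first note that existence already follows from the three-square theorem (Theorem~\ref{th}): given $k$ one can choose $v\in\mathbb{Z}_{+}$ with $v^2\le k$ so that $k-v^2$ is not of the form $4^m(8l+7)$ --- a short residue analysis modulo $8$ shows such a $v$ always exists --- whence $k-v^2=x^2+y^2+z^2$ and $k=x^2+y^2+z^2+v^2$. Alternatively, the self-contained route is Lagrange's argument: Euler's four-square identity
\[
(a_1^2+a_2^2+a_3^2+a_4^2)(b_1^2+b_2^2+b_3^2+b_4^2)=c_1^2+c_2^2+c_3^2+c_4^2,
\]
with the $c_i$ the bilinear expressions coming from quaternion multiplication, shows the sums of four squares are closed under products, reducing the claim to odd primes $p$; a pigeonhole count on the $(p+1)/2$ residues $x^2$ and the $(p+1)/2$ residues $-1-y^2$ modulo $p$ produces $x^2+y^2+1\equiv0\pmod p$, hence $mp=x^2+y^2+1^2+0^2$ with $0<m<p$, and a Fermat descent reducing $m$ to $1$ finishes the proof.

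For part~2 I would pass to the theta series $\theta(q)=\sum_{n\in\mathbb{Z}}q^{n^2}$, so that by definition $\theta(q)^4=\sum_{k\ge0}N_4(k)q^k$. The crux is the Jacobi identity
\[
\theta(q)^4=1+8\sum_{\substack{m\ge1\\ 4\nmid m}}\frac{m\,q^m}{1-q^m},
\]
which I would establish either by Jacobi's classical manipulation of the triple product, or more structurally by recognising $\theta(q)^4$ as a modular form of weight~$2$ on $\Gamma_0(4)$ and matching it against the relevant Eisenstein series on finitely many Fourier coefficients, using finite-dimensionality of that space. Expanding each term as a Lambert series $\frac{mq^m}{1-q^m}=\sum_{j\ge1}mq^{jm}$, the coefficient of $q^k$ on the right is $8\sum_{d\mid k,\ 4\nmid d}d$, giving $N_4(k)=8\sum_{d\mid k,\ 4\nmid d}d$.

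Finally I would match this with the stated answer by elementary divisor bookkeeping: writing $k=2^a m$ with $m$ odd, the divisors $d$ of $k$ with $4\nmid d$ are exactly the $d'$ and (when $a\ge1$) the $2d'$ for $d'\mid m$, so $\sum_{d\mid k,\,4\nmid d}d$ equals $\sigma(k)$ for odd $k$ and $3\sigma(k)$ for even $k$, where $\sigma(k)=\sum_{d'\mid m}d'$ is the sum of odd divisors; this yields $N_4(k)=8\sigma(k)$ and $N_4(k)=24\sigma(k)$ respectively. I expect the genuine obstacle to be the Jacobi identity for $\theta(q)^4$: existence and the closing divisor count are routine, but the exact evaluation of the fourth power of the theta series carries all the content and requires either the full elliptic-function apparatus or the modular-forms dimension argument.
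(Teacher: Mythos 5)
Your proposal cannot be compared against an internal argument, because the paper does not prove this theorem at all: like the other $N$-counting results in that section, it is imported from the literature (the citation to Davenport), and only the $L$-type theorems receive proofs in the paper. Taken on its own, your reconstruction of the classical arguments is correct in outline. For part~1, the reduction to the three-square theorem does work, but your ``short residue analysis modulo $8$'' should be made explicit: the only $k$ not already a sum of three squares are $k=4^m(8l+7)$, and for those the choice $v=2^m$ succeeds, since $k-4^m=4^m(8l+6)$ has odd $2$-adic valuation and hence is not of the excluded form $4^a(8b+7)$; the alternative Euler-identity-plus-descent route is likewise standard and complete. For part~2, your divisor bookkeeping correctly converts Jacobi's formula $N_4(k)=8\sum_{d\mid k,\,4\nmid d}d$ into the paper's normalization: writing $k=2^a m$ with $m$ odd, the divisors not divisible by $4$ sum to $\sigma(m)$ when $a=0$ and to $3\sigma(m)$ when $a\geq 1$, giving $8\sigma(k)$ and $24\sigma(k)$ with $\sigma$ the sum of odd divisors. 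The one genuinely incomplete step is the identity
\begin{equation*}
\theta(q)^4=1+8\sum_{m\geq 1,\ 4\nmid m}\frac{m\,q^m}{1-q^m},
\end{equation*}
which you state but do not prove; both routes you name (the triple-product manipulation, or identifying $\theta^4$ inside the two-dimensional space $M_2(\Gamma_0(4))$ by matching finitely many Fourier coefficients against an Eisenstein series) are valid and standard, but as written this step carries all the analytic content and rests on exactly the same appeal to known results that the paper makes with its citation.
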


\begin{theorem}
\label{4sqnew}
Assume that a natural number $k$ can be presented in the form
\begin{equation}
\label{squared4}
k=x^2+y^2+z^2+v^2,\quad x<y<z<v,\,\,x,\,y,\,z,\,v\in\mathbb{N},
\end{equation}
$L_4(k)$ is the quantity of such presentations. Then

1. If $k\neq\alpha m^2$ for any $m\in\mathbb{N}$, $\alpha=1,2,3,4$, then
$$L_4(k)=\frac{N_4(k)-192L_{1,1,2}(k)-192L_3(k)-48L_2(k)-96L_{1,2}(k)-64L_{1,3}(k)-96L_2(k/2)}{384}.$$

2. If $k=m^2$ for some odd number $m\in\mathbb{N}$ then
$$L_4(k)=\frac{N_4(k)-192L_{1,1,2}(k)-192L_3(k)-48L_2(k)-96L_{1,2}(k)-64L_{1,3}(k)-96L_2(k/2)-8}{384}.$$

3. If $k=2m^2$ or $k=4m^2$ for some $m\in\mathbb{N}$ then
$$L_4(k)=\frac{N_4(k)-192L_{1,1,2}(k)-192L_3(k)-48L_2(k)-96L_{1,2}(k)-64L_{1,3}(k)-96L_2(k/2)-24}{384}.$$

4. If $k=3m^2$ for some $m\in\mathbb{N}$ then
$$L_4(k)=\frac{N_4(k)-192L_{1,1,2}(k)-192L_3(k)-48L_2(k)-96L_{1,2}(k)-64L_{1,3}(k)-96L_2(k/2)-32}{384}.$$
\end{theorem}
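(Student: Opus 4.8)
The plan is to recover $L_4(k)$ from the total count $N_4(k)$ of Theorem~\ref{4sq} by an orbit analysis of the integer solutions of (\ref{ff}) under the group $G$ of signed coordinate permutations, $|G|=2^4\cdot 4!=384$; this order is the source of the denominator. Since $G$ preserves the quadratic form, the $N_4(k)$ solutions split into $G$-orbits, and $N_4(k)$ equals the sum of their sizes. An orbit is determined by the multiset of absolute values, that is, by the number $z$ of vanishing coordinates and the multiplicities $m_1,m_2,\dots$ of the distinct nonzero values, and for such a representative the orbit has size $\frac{4!}{z!\,\prod_i m_i!}\cdot 2^{\,4-z}$ (arrangements times sign choices, a zero carrying no independent sign).

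First I would run through all shapes in which the nonzero values are \emph{not} all equal and match each to one of the strictly ordered counting functions already introduced, together with its orbit size: four distinct nonzero values give (\ref{squared4}) with orbit $384$; a shape $\{a,b,c,c\}$ with $a,b,c$ distinct gives (\ref{sq112}) with orbit $192$; $\{a,b,c,0\}$ with $a,b,c$ distinct gives (\ref{squared3}) with orbit $192$; $\{a,a,b,b\}$ with $a\neq b$ gives (\ref{squared21}) for $k/2$ with orbit $96$; $\{a,b,b,0\}$ with $a\neq b$ gives (\ref{sq}) with orbit $96$; $\{a,b,b,b\}$ with $a\neq b$ gives (\ref{q1}) with orbit $64$; and $\{a,b,0,0\}$ with $a\neq b$ gives (\ref{squared21}) with orbit $48$. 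Crucially, the defining strict inequalities of $L_{1,1,2},L_{1,2},L_{1,3},L_2,L_3$ exclude the cases where nonzero values coincide, so these seven families are pairwise disjoint and, as one checks by enumerating the partitions of $4$, account for every solution whose nonzero entries are not all equal.

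In case~1, where $k\neq\alpha m^2$ for $\alpha=1,2,3,4$, no solution can have all its nonzero coordinates equal (such a solution forces $k\in\{a^2,2a^2,3a^2,4a^2\}$), so the seven families exhaust all orbits and summing their sizes yields
$$N_4(k)=384L_4(k)+192L_{1,1,2}(k)+192L_3(k)+48L_2(k)+96L_{1,2}(k)+64L_{1,3}(k)+96L_2(k/2);$$
solving for $L_4(k)$ gives the claimed formula. For cases~2--4 I would add the ``all nonzero values equal'' shapes that now occur: $\{a,0,0,0\}$ when $k=a^2$ (orbit $8$), $\{a,a,0,0\}$ when $k=2a^2$ (orbit $24$), $\{a,a,a,0\}$ when $k=3a^2$ (orbit $32$), and $\{a,a,a,a\}$ when $k=4a^2$ (orbit $16$). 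Inserting these extra orbit sizes into the sum shifts the numerator by $-8$ for $k=m^2$, by $-32$ for $k=3m^2$, and by $-24$ for $k=2m^2$; for $k=4m^2$ both $\{m,m,m,m\}$ (orbit $16$) and $\{2m,0,0,0\}$ (orbit $8$, since then $k=(2m)^2$ is a square) arise, again totalling $24$, which explains grouping $2m^2$ and $4m^2$ together.

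The main obstacle is precisely this degenerate bookkeeping: I must check for each special value of $k$ exactly which of the four ``equal nonzero'' patterns is realizable (e.g.\ $k=m^2$ with $m$ odd admits only $\{m,0,0,0\}$, since $m^2=4a^2$ would force $m$ even and hence place $k$ in the $4m^2$ case), confirm that $2m^2$, $3m^2$ and genuine odd squares never coincide while $4m^2$ is deliberately absorbed into case~3, and verify that the orbit-size formula is applied with the correct multiplicities so that the residual constants come out as $8,24,32,24$. Granting the earlier theorems for $N_4$ and for the lower functions $L_2,L_{1,2},L_{1,3},L_3,L_{1,1,2}$ as black boxes, the remainder is the finite case check just described.
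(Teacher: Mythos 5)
Your proposal is correct and follows essentially the same argument as the paper: the paper likewise decomposes the $N_4(k)$ integer solutions according to the multiset of absolute values, matches each non-degenerate shape to one of $L_4, L_{1,1,2}, L_3, L_2, L_{1,2}, L_{1,3}, L_2(k/2)$ with exactly the orbit sizes $384, 192, 192, 48, 96, 64, 96$ you list, and then adds the same degenerate shapes $(\pm m,0,0,0)$, $(\pm m,\pm m,0,0)$, $(\pm m,\pm m,\pm m,0)$, and $(\pm m,\pm m,\pm m,\pm m)$ together with $(\pm 2m,0,0,0)$ to obtain the correction constants $8$, $24$, $32$, $24$. Your group-theoretic phrasing (orbits under signed permutations) is just a cleaner packaging of the paper's direct enumeration of corresponding ordered quadruples.
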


\begin{proof}
1. Let $k\neq\alpha m^2$ for any $m\in\mathbb{N}$, $\alpha=1,2,3,4$. Then $k$ has $N_4(k)$ presentations in the form (\ref{ff}),
$L_4(k)$ presentations in the form (\ref{squared4}), $L_3(k)$ presentations in the form  (\ref{squared3}),
$L_{1,1,2}(k)$ presentations in the form  (\ref{sq112}), $L_2(k)$ presentations in the form  (\ref{squared21}),
$L_{1,2}(k)$ presentations in the form  (\ref{sq}), $L_{1,3}(k)$ presentations in the form  (\ref{q1}).

To every presentation $(x,y,z,v)$ of $k$ in the form
 (\ref{squared4}) correspond exactly $4!\cdot 2^4=384$ different presentations of $k$ in the form   (\ref{ff}), namely,
 all ordered quaternaries
\linebreak $(\pm u_1,\pm u_2,\pm u_3,\pm u_4)$, where $u_1,\,u_2,\,u_3,\,u_4\in\{x,y,z,v\}$  are mutually different.

To every presentation  $(x,y,z)$ of $k$ in the form
 (\ref{sq112}) correspond exactly  $\frac{4!}{2!}\cdot 2^4=192$ different presentations of $k$ in the form   (\ref{ff}),
namely,  ordered quaternaries  $(\pm x,\pm y,\pm z,\pm z)$, $(\pm y,\pm x,\pm z,\pm z)$, $(\pm z,\pm z,\pm x,\pm y)$,
$(\pm z,\pm z,\pm y,\pm x)$, $(\pm z,\pm x,\pm y,\pm z)$, $(\pm z,\pm y,\pm x,\pm z)$, $(\pm x,\pm z,\pm z,\pm y)$,
$(\pm y,\pm z,\pm z,\pm x)$, $(\pm z,\pm x,\pm z,\pm y)$, $(\pm z,\pm y,\pm z,\pm x)$, $(\pm x,\pm z,\pm y,\pm z)$,
$(\pm y,\pm z,\pm x,\pm z)$.

To every presentation  $(x,y,z)$ of $k$ in the form (\ref{squared3}) correspond exactly $4!\cdot 2^3=192$ different presentations of  $k$ in the form  (\ref{ff}),
namely, all ordered quaternaries $(\pm u_1,\pm u_2,\pm u_3,0)$, $(\pm u_1,\pm u_2,0,\pm u_3)$, $(\pm u_1,0,\pm u_2,\pm u_3)$,
$(0,\pm u_1,\pm u_2,\pm u_3)$, where $u_1,\,u_2,\,u_3\in\{x,y,z\}$ are mutually different.

To every presentation  $(x,y)$ of $k$ in the form
(\ref{squared21}) correspond exactly  $\frac{4!}{2!}\cdot 2^2=48$ different presentations of  $k$ in the form (\ref{ff}), namely,
 all ordered  quaternaries 
$(\pm u_1,\pm u_2,0,0)$, $(\pm u_1,0,\pm u_2,0)$, $(\pm u_1,0,0,\pm u_2)$, $(0,\pm u_1,\pm u_2,0)$,  $(0,\pm u_1,0,\pm u_2)$, $(0,0,\pm u_1,\pm u_2)$,
where $u_1,\,u_2\in\{x,y\}$ и $u_1\neq u_2$.

To every presentation   $(x,y)$ of $k$ in the form
 (\ref{sq}) correspond exactly  $\frac{4!}{2!}\cdot 2^3=96$ different presentations of  $k$ in the form (\ref{ff}), namely,
 ordered  quaternaries
$(\pm x,0,\pm y,\pm y)$, $(0,\pm x,\pm y,\pm y)$, $(\pm y,\pm y,\pm x,0)$,
$(\pm y,\pm y,0,\pm x)$, $(\pm y,\pm x,0,\pm y)$, $(\pm y,0,\pm x,\pm y)$, 
\linebreak $(\pm x,\pm y,\pm y,0)$,
$(0,\pm y,\pm y,\pm x)$, $(\pm y,\pm x,\pm y,0)$, $(\pm y,0,\pm y,\pm x)$,
$(\pm x,\pm y,0,\pm y)$,
\linebreak  $(0,\pm y,\pm x,\pm y)$.

To every presentation   $(x,y)$ of $k$ in the form
(\ref{q1}) correspond exactly   $4\cdot 2^4=64$ different presentations of  $k$ in the form  (\ref{ff}), namely, ordered  quaternaries
$(\pm x,\pm y,\pm y,\pm y)$, 
\linebreak $(\pm y,\pm x,\pm y,\pm y)$, $(\pm y,\pm y,\pm x,\pm y)$, $(\pm y,\pm y,\pm y,\pm x)$.

Set $L_2(k/2)=0$, if $k$ is an odd number. If $k$ is an even number, then to every presentation $(x,y)$ of $k/2$ in the form (\ref{squared21})
correspond exactly  $6\cdot 2^4=96$ different presentations of  $k$ in the form (\ref{ff}), namely, ordered  quaternaries $(\pm x,\pm x,\pm y,\pm y)$,
\linebreak $(\pm x,\pm y,\pm x,\pm y)$, $(\pm x,\pm y,\pm y,\pm x)$, $(\pm y,\pm y,\pm x,\pm x)$, $(\pm y,\pm x,\pm x,\pm y)$, 
 $(\pm y,\pm x,\pm y,\pm x)$.

Thus
$$N_4(k)=384L_4(k)+192L_{1,1,2}(k)+192L_3(k)+48L_2(k)+96L_{1,2}(k)+64L_{1,3}(k)+96L_2(k/2),$$
whence it follows the required formula.

2. Let $k=m^2$ for some odd number $m\in\mathbb{N}$.
Then, besides presentations of $k$ in the form (\ref{ff}), described in p~1, there are also only $8$ different presentations of  $k$ in the form  (\ref{ff}),
namely, ordered sequences  $(\pm m,0,0,0)$, $(0,\pm m,0,0)$, $(0,0,\pm m,0)$, $(0,0,0,\pm m)$. Then on the ground of p.~1
$$N_4(k)=384L_4(k)+192L_{1,1,2}(k)+192L_3(k)+48L_2(k)+96L_{1,2}(k)+64L_{1,3}(k)+96L_2(k/2)+8,$$
whence it follows the required formula.

3. Let $k=2m^2$ for some $m\in\mathbb{N}$. Then, besides presentations of $k$ in the form  (\ref{ff}),
described in p~1, there are also only $24$ different  presentations of  $k$ in the form (\ref{ff}), namely,
ordered sequences  $(\pm m,\pm m,0,0)$, $(\pm m,0,\pm m,0)$, $(\pm m,0,0,\pm m)$,
$(0,\pm m,\pm m,0)$, 
\linebreak $(0,\pm m,0,\pm m)$, $(0,0,\pm m,\pm m)$.

Let $k=4m^2$ for some $m\in\mathbb{N}$. Then, besides presentations of $k$ in the form (\ref{ff}), described in p~1,
there are also only  $24$ different  presentations of  $k$ in the form  (\ref{ff}), namely, ordered sequences  $(\pm m,\pm m,\pm m,\pm m)$,
$(\pm 2m,0,0,0)$, $(0,\pm 2m,0,0)$, $(0,0,\pm 2m,0)$, 
\linebreak $(0,0,0,\pm 2m)$.

Therefore on the ground of p.~1
$$N_4(k)=384L_4(k)+192L_{1,1,2}(k)+192L_3(k)+48L_2(k)+96L_{1,2}(k)+64L_{1,3}(k)+96L_2(k/2)+24,$$
whence it follows the required formula.

4. Let $k=3m^2$ for some $m\in\mathbb{N}$. Then, besides presentations of $k$ in the form  (\ref{ff}), described in p~1,
there are also only $32$ different  presentations of  $k$ in the form (\ref{ff}), namely, ordered sequences  $(\pm m,\pm m,\pm m,0)$,
$(\pm m,0,\pm m,\pm m)$, $(\pm m,\pm m,0,\pm m)$, $(0,\pm m,\pm m,\pm m)$. Then on the ground of p.~1
$$N_4(k)=384L_4(k)+192L_{1,1,2}(k)+192L_3(k)+48L_2(k)+96L_{1,2}(k)+64L_{1,3}(k)+96L_2(k/2)+32,$$
whence it follows the required formula.
\end{proof}

Next theorem follows from theorems \ref{sq1}, \ref{j}, \ref{j1}, \ref{3sq}, \ref{ssq}, \ref{4sqnew}.

\begin{theorem}
\label{5sqnew}
1. If $k\neq\alpha m^2$ for any $m\in\mathbb{N}$, $\alpha=1,2,3,4$, then
$$L_4(k)=\frac{N_4(k)-12N_{1,1,2}(k)-4N_3(k)+18N_2(k)+24N_{1,2}(k)+32N_{1,3}(k)+12N_2(k/2)}{384}.$$

2. If $k=m^2$ for some odd number $m\in\mathbb{N}$ then
$$L_4(k)=\frac{N_4(k)-12N_{1,1,2}(k)-4N_3(k)+18N_2(k)+24N_{1,2}(k)+32N_{1,3}(k)+12N_2(k/2)-120}{384}.$$

3. If $k=2m^2$ for some $m\in\mathbb{N}$ then
$$L_4(k)=\frac{N_4(k)-12N_{1,1,2}(k)-4N_3(k)+18N_2(k)+24N_{1,2}(k)+32N_{1,3}(k)+12N_2(k/2)-72}{384}.$$

4. If $k=3m^2$ for some $m\in\mathbb{N}$ then
$$L_4(k)=\frac{N_4(k)-12N_{1,1,2}(k)-4N_3(k)+18N_2(k)+24N_{1,2}(k)+32N_{1,3}(k)+12N_2(k/2)-64}{384}.$$

5. If $k=4m^2$ for some $m\in\mathbb{N}$ then
$$L_4(k)=\frac{N_4(k)-12N_{1,1,2}(k)-4N_3(k)+18N_2(k)+24N_{1,2}(k)+32N_{1,3}(k)+12N_2(k/2)-216}{384}.$$
\end{theorem}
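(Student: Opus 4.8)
The plan is to obtain Theorem \ref{5sqnew} purely by substitution, eliminating the ``ordered, distinct-entries'' counts $L_{1,1,2}(k)$, $L_3(k)$, $L_2(k)$, $L_{1,2}(k)$, $L_{1,3}(k)$, $L_2(k/2)$ from the formula of Theorem \ref{4sqnew} in favour of the ``all-integer'' counts $N_4(k)$, $N_{1,1,2}(k)$, $N_3(k)$, $N_2(k)$, $N_{1,2}(k)$, $N_{1,3}(k)$, $N_2(k/2)$. Indeed, Theorem \ref{4sqnew} already writes $384\,L_4(k)$ as $N_4(k)$ minus a fixed integer combination of the six quantities $L_{1,1,2}, L_3, L_2, L_{1,2}, L_{1,3}, L_2(\cdot/2)$ together with an explicit numerical correction depending on the shape of $k$. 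Theorems \ref{sq1}, \ref{j}, \ref{j1} express $L_2$, $L_{1,2}$, $L_{1,3}$ directly through $N_2$, $N_{1,2}$, $N_{1,3}$; Theorem \ref{3sq} expresses $L_3$ through $N_3, N_2, N_{1,2}$; and Theorem \ref{ssq} expresses $L_{1,1,2}$ through $N_{1,1,2}$ and the very same $L$-counts. So the computation is a nested substitution followed by collection of coefficients.

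First I would treat the generic case $k\neq\alpha m^2$ ($\alpha=1,2,3,4$). Here case~1 of each auxiliary theorem applies: $L_2(k)=N_2(k)/8$, $L_{1,2}(k)=N_{1,2}(k)/4$, $L_{1,3}(k)=N_{1,3}(k)/4$, $L_2(k/2)=N_2(k/2)/8$, and $L_3(k)=(N_3(k)-3N_2(k)-6N_{1,2}(k))/48$. I would first substitute these into the case-1 formula of Theorem \ref{ssq} to get $16\,L_{1,1,2}(k)=N_{1,1,2}(k)-N_2(k)-2N_{1,2}(k)-4N_{1,3}(k)-2N_2(k/2)$, then insert all six expressions into Theorem \ref{4sqnew} and collect. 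Each coefficient receives contributions from two or three terms (for instance the coefficient of $N_2(k)$ is $+12$ from $-192L_{1,1,2}$, $+12$ from $-192L_3$ and $-6$ from $-48L_2$, giving $+18$), reproducing $-12N_{1,1,2}-4N_3+18N_2+24N_{1,2}+32N_{1,3}+12N_2(k/2)$ with zero constant, which is exactly case~1.

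The remaining four cases differ only in the additive constants, so the essential task---and the main obstacle---is bookkeeping: for each special shape of $k$ I must identify which sub-case of each auxiliary theorem is in force, then track how the scattered numerical corrections accumulate through the nested substitution. The case assignments are the delicate part. For example, $k=m^2$ with $m$ odd triggers case~2 everywhere and forces $N_2(k/2)=L_2(k/2)=0$; $k=2m^2$ triggers case~2 of Theorems \ref{sq1} and \ref{j} but case~1 of Theorem \ref{j1}, case~3 of Theorems \ref{3sq} and \ref{ssq}, while $k/2=m^2$ makes $L_2(k/2)$ fall under case~2 of Theorem \ref{sq1}; $k=4m^2=(2m)^2$ is a perfect square, so Theorems \ref{sq1}, \ref{3sq} use case~2 and $k/2=2m^2$ puts $L_2(k/2)$ again in case~2, whereas Theorem \ref{j1} uses its case~4. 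Carrying the constants through yields the corrections $-120$, $-72$, $-64$, $-216$ stated in cases 2--5.

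One subtlety deserves mention. In case~4 ($k=3m^2$) Theorem \ref{3sq} drops the $N_2$-term altogether and Theorem \ref{sq1} stays in case~1, so the coefficient of $N_2(k)$ that genuinely emerges from the substitution is $+6$ rather than $+18$; this is harmless because $3$ occurs to an odd power in $3m^2$, whence $N_2(3m^2)=0$ and the stated uniform coefficient $+18N_2(k)$ contributes nothing. The same convention $N_2(k/2)=0$ for odd $k$ is used throughout. Once these case matchings are fixed, each of the five formulas follows by the same mechanical collection of terms, completing the proof.
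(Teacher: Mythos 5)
Your proposal is correct and follows exactly the route the paper intends: the paper's entire proof of Theorem \ref{5sqnew} is the one-line remark that it follows from Theorems \ref{sq1}, \ref{j}, \ref{j1}, \ref{3sq}, \ref{ssq}, \ref{4sqnew}, which is precisely the nested substitution and collection of coefficients you carry out. Your case assignments and resulting constants ($-120$, $-72$, $-64$, $-216$) all check out, and your observation that for $k=3m^2$ the substitution genuinely produces coefficient $+6$ on $N_2(k)$ rather than $+18$ --- harmless because $N_2(3m^2)=0$ --- is a real subtlety that the paper leaves unstated.
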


\section{Conclusion}

In tables \ref{t1},\,\ref{t2},\,\ref{t3},\,\ref{t4},\,\ref{t7},\,\ref{t8} (\ref{t5} and \ref{t6}) we present (on decrease) $10$ 
maximal negative eigenvalues of Laplacian on Lie groups ${\rm Spin}(9)$,  ${\rm SO}(9)$, ${\rm Sp}(4)$, ${\rm PSp}(4)$,
${\rm SO}(8)$, ${\rm PSO}(8)$ (${\rm Spin}(8)$) respectively.

\begin{theorem}
\label{x1}
Let $G={\rm Spin}(9)$ is supplied by biinvariant Riemannian metric $\nu$ such that $\nu(e)=-k_{{\rm ad}}$.
A number $\lambda < 0$ is eigenvalue of Laplacian on $(G,\nu)$ in one and only one of the following cases (I or II)

I. 1) $-7\lambda\in\mathbb{N}$;

2) natural number  $84-56\lambda$ is a sum of squares of four mutually different odd natural numbers, i.e.
 $L_4(21-14\lambda)<L_4(84-56\lambda)$.

In addition the number of highest weights $\Lambda$, such that $\lambda(\Lambda)=\lambda$, is equal to  $L_4(84-56\lambda)$.

II. 1) $-14\lambda\in\mathbb{N}$;

2) natural number  $21-14\lambda$ is a sum of squares of four mutually different natural numbers, i.e. $L_4(21-14\lambda)>0$.

3) natural number $84-56\lambda$ can't be presented as a sum of squares of four mutually different odd natural numbers,
i.e. $L_4(84-56\lambda)=L_4(21-14\lambda)$.

In addition the number of highest weights $\Lambda$, such that $\lambda(\Lambda)=\lambda$, is equal to  $L_4(21-14\lambda)$.
\end{theorem}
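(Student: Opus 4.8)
The plan is to read the statement off the eigenvalue formula (\ref{b1}) and to turn the count of highest weights into a count of representations by four distinct squares. Since $\nu(e)=-k_{\rm ad}$ we have $\gamma=1$, and by point 7a) the highest weights of $\mbox{Spin}(9)$ are parametrized by $\nu_i=\Lambda_i+1\in\mathbb{N}$. Writing $k:=84-56\lambda$, formula (\ref{b1}) reads
$$k=(2\nu_1+2\nu_2+2\nu_3+\nu_4)^2+(2\nu_2+2\nu_3+\nu_4)^2+(2\nu_3+\nu_4)^2+\nu_4^2.$$
First I would set $a_1=2\nu_1+2\nu_2+2\nu_3+\nu_4$, $a_2=2\nu_2+2\nu_3+\nu_4$, $a_3=2\nu_3+\nu_4$, $a_4=\nu_4$, and verify that $(\nu_i)\mapsto(a_i)$ is a bijection of $\mathbb{N}^4$ onto the set of quadruples with $a_1>a_2>a_3>a_4\geq 1$ all of the same parity; the inverse is $\nu_4=a_4$, $2\nu_3=a_3-a_4$, $2\nu_2=a_2-a_3$, $2\nu_1=a_1-a_2$, which lies in $\mathbb{N}^4$ exactly under those constraints. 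Thus the number of highest weights with $\lambda(\Lambda)=\lambda$ equals the number of representations $k=a_1^2+a_2^2+a_3^2+a_4^2$ by four distinct squares of equal parity.

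Next I would carry out the parity analysis. Since a square is $\equiv 0$ or $1\pmod 4$, in any representation of $k$ by four distinct squares the number of odd summands is $\equiv k\pmod 4$; hence if $k\equiv 0\pmod 4$ every such representation has either $0$ or $4$ odd parts, i.e.\ all parts of equal parity, while if $k\not\equiv 0\pmod 4$ no equal-parity representation exists at all. As $84\equiv 0\pmod 4$, the latter shows $-14\lambda\in\mathbb{N}$ is necessary for $\lambda<0$ to be an eigenvalue. Refining modulo $8$: an all-odd quadruple has each $a_i^2\equiv 1\pmod 8$, so its sum is $\equiv 4\pmod 8$, whereas an all-even quadruple $a_i=2b_i$ gives $k=4(b_1^2+b_2^2+b_3^2+b_4^2)$ with $b_1>b_2>b_3>b_4\geq 1$ distinct, so the number of all-even representations of $k$ is exactly $L_4(k/4)=L_4(21-14\lambda)$, with $L_4$ as in Theorem~\ref{4sqnew}.

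Assembling these, when $k\equiv 0\pmod 4$ every four-distinct-square representation is of equal parity, so $L_4(84-56\lambda)$ counts precisely the equal-parity quadruples and splits as the all-odd count plus the all-even count; hence the all-odd count equals $L_4(84-56\lambda)-L_4(21-14\lambda)$. I would then split on $k\bmod 8$. If $-7\lambda\in\mathbb{N}$ then $k\equiv 4\pmod 8$, both parities may occur, the weight count is $L_4(84-56\lambda)$, and an all-odd representation exists iff $L_4(21-14\lambda)<L_4(84-56\lambda)$: this is case~I. If $-14\lambda\in\mathbb{N}$ but $-7\lambda\notin\mathbb{N}$ then $k\equiv 0\pmod 8$, no all-odd representation is possible, $L_4(84-56\lambda)=L_4(21-14\lambda)$ holds automatically, and the weight count is $L_4(21-14\lambda)$; the identical conclusion applies in the remaining situation $k\equiv 4\pmod 8$ with no all-odd representation. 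These together constitute case~II, and the two cases are mutually exclusive since they are distinguished by whether the all-odd count is positive or zero.

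The step demanding the most care is the parity bookkeeping: one must confirm that for $k\equiv 0\pmod 4$ there are no mixed-parity four-distinct-square representations, so that $L_4(84-56\lambda)$ needs no correction term, and that the congruences $-7\lambda\in\mathbb{N}$ and $-14\lambda\in\mathbb{N}$ match $k\equiv 4$ and $k\equiv 0\pmod 8$ exactly. Once this is pinned down, the two itemized sets of conditions and the two multiplicity formulas follow directly from the bijection and the counts above.
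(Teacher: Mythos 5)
Your proof is correct and follows essentially the same route as the paper: both rest on the bijection $(\nu_1,\nu_2,\nu_3,\nu_4)\mapsto(\nu_4,\,2\nu_3+\nu_4,\,2\nu_2+2\nu_3+\nu_4,\,2\nu_1+2\nu_2+2\nu_3+\nu_4)$ onto strictly increasing equal-parity quadruples, with the all-even representations identified with $L_4(21-14\lambda)$ and the all-odd ones with $L_4(84-56\lambda)-L_4(21-14\lambda)$ via the congruences mod $4$ and mod $8$. The only difference is organizational (you substitute once and then split by parity, while the paper splits on the parity of $\nu_4$ first and uses two changes of variables), and your mod-$4$ argument ruling out mixed-parity representations is, if anything, stated more cleanly than the corresponding step in the paper.
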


\begin{table}[h]
\begin{center}
\begin{tabular}{|c|c|c|c|c|c|c|c|}
\hline
$\No$&$\lambda$&$k_1=21-14\lambda$&$k_2=84-56\lambda$&$L_4(k_1)$&$L_4(k_2)$&$(\nu_1,\nu_2,\nu_3,\nu_4)$&$\sigma(\lambda)$\\
\hline
$1$&$-\frac{4}{7}$&$29$&$116$&$0$&$1$&$(2,1,1,1)$&$81$\\
\hline
$2$&$-\frac{9}{14}$&$30$&$120$&$1$&$1$&$(1,1,1,2)$&$256$\\
\hline
$3$&$-\frac{9}{7}$&$39$&$156$&$1$&$3$&$(3,1,1,1)$&$25376$\\
$ $&$ $&$ $&$ $&$ $&$ $&$(1,1,2,1)$&$ $\\
$ $&$ $&$ $&$ $&$ $&$ $&$(2,1,1,2)$&$ $\\
\hline
$4$&$-\frac{10}{7}$&$41$&$164$&$0$&$1$&$(1,1,1,3)$&$15876$\\
\hline
$5$&$-\frac{12}{7}$&$45$&$180$&$0$&$1$&$(2,2,1,1)$&$53361$\\
\hline
$6$&$-\frac{25}{14}$&$46$&$184$&$1$&$1$&$(1,2,1,2)$&$186624$\\
\hline
$7$&$-2$&$49$&$196$&$0$&$1$&$(2,1,2,1)$&$352836$\\
\hline
$8$&$-\frac{29}{14}$&$50$&$200$&$1$&$1$&$(3,1,1,2)$&$331776$\\
\hline
$9$&$-\frac{15}{7}$&$51$&$204$&$1$&$3$&$(4,1,1,1)$&$1467936$\\
$ $&$ $&$ $&$ $&$ $&$ $&$(2,1,1,3)$&$ $\\
$ $&$ $&$ $&$ $&$ $&$ $&$(1,1,2,2)$&$ $\\
\hline
$10$&$-\frac{16}{7}$&$53$&$212$&$0$&$1$&$(1,3,1,1)$&$245025$\\
\hline
\end{tabular}
\Table\label{t1} Eigenvalues of Laplacian  on $({\rm Spin}(9),\nu)$
\end{center}
\end{table}

\begin{table}[h]
\begin{center}
\begin{tabular}{|c|c|c|c|c|c|c|c|}
\hline
$\No$&$\lambda$&$k_1=21-14\lambda$&$k_2=84-56\lambda$&$L_4(k_1)$&$L_4(k_2)$&$(\nu_1,\nu_2,\nu_3,\nu_4)$&$\sigma(\lambda)$\\
\hline
$1$&$-\frac{4}{7}$&$29$&$116$&$0$&$1$&$(2,1,1,1)$&$81$\\
\hline
$2$&$-\frac{9}{7}$&$39$&$156$&$1$&$3$&$(3,1,1,1)$&$8992$\\
$ $&$ $&$ $&$ $&$ $&$ $&$(1,1,2,1)$&$ $\\
\hline
$3$&$-\frac{10}{7}$&$41$&$164$&$0$&$1$&$(1,1,1,3)$&$15876$\\
\hline
$4$&$-\frac{12}{7}$&$45$&$180$&$0$&$1$&$(2,2,1,1)$&$53361$\\
\hline
$5$&$-2$&$49$&$196$&$0$&$1$&$(2,1,2,1)$&$352836$\\
\hline
$6$&$-\frac{15}{7}$&$51$&$204$&$1$&$3$&$(4,1,1,1)$&$878112$\\
$ $&$ $&$ $&$ $&$ $&$ $&$(2,1,1,3)$&$ $\\
\hline
$7$&$-\frac{16}{7}$&$53$&$212$&$0$&$1$&$(1,3,1,1)$&$245025$\\
\hline
$8$&$-\frac{18}{7}$&$57$&$228$&$1$&$3$&$(3,2,1,1)$&$3550600$\\
$ $&$ $&$ $&$ $&$ $&$ $&$(1,2,2,1)$&$ $\\
\hline
$9$&$-\frac{19}{7}$&$59$&$236$&$0$&$1$&$(1,2,1,3)$&$7683984$\\
\hline
$10$&$-\frac{20}{7}$&$61$&$244$&$0$&$1$&$(3,1,2,1)$&$6036849$\\
\hline
\end{tabular}
\Table\label{t2} Eigenvalues of Laplacian  on $({\rm SO}(9),\nu)$
\end{center}
\end{table}

\begin{proof}
Following to (\ref{xi4}), let us consider Diophantine equation
\begin{equation}
\label{xx4}
(2\nu_1+2\nu_2+2\nu_3+\nu_4)^2+(2\nu_2+2\nu_3+\nu_4)^2+(2\nu_3+\nu_4)^2+\nu_4^2=84-56\lambda,
\end{equation}
where $\nu_1,\,\nu_2,\,\nu_3,\,\nu_4\in\mathbb{N}$.
It is clear that if the equation (\ref{xx4}) is solvable, then $-56\lambda\in\mathbb{N}$.

Assume at first that the equation (\ref{xx4}) has a solution  $(\nu_1,\nu_2,\nu_3,\nu_4)\in\mathbb{N}^4$ such that $\nu_4\equiv 1(\mbox{mod}\,2)$.
Let us introduce the following notation
$$x=\nu_4,\,\,y=2\nu_3+\nu_4,\,\,z=2\nu_2+2\nu_3+\nu_4,\,\,v=2\nu_1+2\nu_2+2\nu_3+\nu_4.$$
Then the equation (\ref{xx4}) will written down in the form
\begin{equation}
\label{xx5}
x^2+y^2+z^2+v^2=84-56\lambda,
\end{equation}
furthermore
\begin{equation}
\label{cont2}
x,\,y,\,z,\,v\in\mathbb{N},\,\,x\equiv y\equiv z\equiv v\equiv 1(\mbox{mod}\,2),\,\,x<y<z<v.
\end{equation}
Note that the square of any odd number gives residue $1$ under division by $8$, while the square of any even number
gives residue $0$ or $4$ under division by $8$. Consequently if the equation (\ref{xx5}) is solvable under condition (\ref{cont2}),
then $84-56\lambda\equiv 4(\mbox{mod}\,8)$ what is equivalent to relation $-7\lambda\in\mathbb{N}$. Conversely, if $-7\lambda\in\mathbb{N}$ 
and the equation (\ref{xx5}) is solvable under condition (\ref{cont2}), then or $x,\,y,\,z,\,v$ are odd, or $x,\,y,\,z,\,v$ are even
and  among these numbers the odd quantity is divisible by $4$.
Therefore the number of solutions to equation (\ref{xx5}) with condition (\ref{cont2}) is equal to $L_4(84-56\lambda)-L_4(21-14\lambda)$.

Suppose now that the equation (\ref{xx4}) has a solution $(\nu_1,\,\nu_2,\,\nu_3,\,\nu_4)\in\mathbb{N}^4$ such that $\nu_4\equiv 0(\mbox{mod}\,2)$.
It is clear that  $84-56\lambda\equiv 0(\mbox{mod}\,4)$ what is equivalent to relation $-14\lambda\in\mathbb{N}$. Let us introduce the following notation
$$x=\frac{\nu_4}{2},\,\,y=\nu_3+\frac{\nu_4}{2},\,\,z=\nu_2+\nu_3+\frac{\nu_4}{2},\,\,v=\nu_1+\nu_2+\nu_3+\frac{\nu_4}{2}.$$
Then the equation (\ref{xx4}) will written down in the form
\begin{equation}
\label{xx6}
x^2+y^2+z^2+v^2=21-14\lambda,
\end{equation}
furthermore
\begin{equation}
\label{cont3}
x,\,y,\,z,\,v\in\mathbb{N},\,\,x<y<z<v.
\end{equation}
In addition the number of solutions to equation (\ref{xx6}), satisfying conditions (\ref{cont3}), is equal to $L_4(21-14\lambda)$.
\end{proof}

\begin{theorem}
\label{x2}
Let $G={\rm SO}(9)$ is supplied by biinvariant Riemannian metric $\nu$ such that $\nu(e)=-k_{{\rm ad}}$.
 A number $\lambda < 0$ is an eigenvalue of Laplacian on $(G,\nu)$ if and only if the following conditions are fulfilled

1) $-7\lambda\in\mathbb{N}$;

2) natural number $84-56\lambda$ is a sum of squares of four mutually different odd natural numbers, i.e.
$L_4(21-14\lambda)<L_4(84-56\lambda)$.

In addition the number of highest weights $\Lambda$, such that $\lambda(\Lambda)=\lambda$, is equal to $L_4(84-56\lambda)-L_4(21-14\lambda)$.
\end{theorem}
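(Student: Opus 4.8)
The plan is to read the assertion straight off the description in point 8b, where the set $\Xi_2$ of (\ref{xi5}) records exactly the relevant Diophantine data. Since the metric satisfies $\nu(e)=-k_{\rm ad}$ we have $\gamma=1$, so a number $\lambda<0$ is an eigenvalue of $({\rm SO}(9),\nu)$ precisely when
$$(2\nu_1+2\nu_2+2\nu_3+\nu_4)^2+(2\nu_2+2\nu_3+\nu_4)^2+(2\nu_3+\nu_4)^2+\nu_4^2=84-56\lambda$$
has a solution $(\nu_1,\nu_2,\nu_3,\nu_4)\in\mathbb{N}^4$ with $\nu_4\equiv 1(\mbox{mod}\,2)$. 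By 7b the assignment $\Lambda\mapsto(\nu_1,\nu_2,\nu_3,\nu_4)$, with $\nu_1=\Omega_1+1$, $\nu_2=\Omega_2+1$, $\nu_3=\Omega_3+1$, $\nu_4=2\Omega_4+1$, is a bijection between $\Lambda^{+}({\rm SO}(9))$ and the quadruples in $\mathbb{N}^4$ with odd last entry, and (\ref{b1}) writes $\lambda(\Lambda)$ through the left-hand side above; hence the number of highest weights $\Lambda$ with $\lambda(\Lambda)=\lambda$ equals the number of such solutions.

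Next I would apply the same odd-case substitution already used in the proof of Theorem \ref{x1}: putting $x=\nu_4$, $y=2\nu_3+\nu_4$, $z=2\nu_2+2\nu_3+\nu_4$, $v=2\nu_1+2\nu_2+2\nu_3+\nu_4$ turns the equation into $x^2+y^2+z^2+v^2=84-56\lambda$. Because $\nu_4$ is odd and the successive differences $y-x=2\nu_3$, $z-y=2\nu_2$, $v-z=2\nu_1$ are even, all four of $x,y,z,v$ are odd, and $\nu_i\geq 1$ forces $x<y<z<v$; conversely $\nu_4=x$, $\nu_3=(y-x)/2$, $\nu_2=(z-y)/2$, $\nu_1=(v-z)/2$ recovers a quadruple in $\mathbb{N}^4$ with odd $\nu_4$ from any strictly increasing quadruple of odd naturals. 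Thus the solutions to be counted are in bijection with the representations of $84-56\lambda$ as a sum of four mutually different odd squares.

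Two short number-theoretic remarks then close the argument. First, the square of an odd number is $\equiv 1(\mbox{mod}\,8)$, so solvability forces $84-56\lambda\equiv 4(\mbox{mod}\,8)$, which is equivalent to $-7\lambda\in\mathbb{N}$ (and makes $21-14\lambda$ a natural number); this yields condition 1. Second, I would count the all-odd representations by subtraction from $L_4(84-56\lambda)$: when $84-56\lambda\equiv 4(\mbox{mod}\,8)$, a residue analysis modulo $8$ (each square being $\equiv 0,1,4$) excludes every quadruple with one, two, or three odd entries, so each representation counted by $L_4(84-56\lambda)$ is either all-odd or all-even, and halving identifies the all-even ones with the representations counted by $L_4(21-14\lambda)$. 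Consequently the number of all-odd representations is $L_4(84-56\lambda)-L_4(21-14\lambda)$, which simultaneously serves as the criterion for $\lambda$ being an eigenvalue (it must be positive, i.e. $L_4(21-14\lambda)<L_4(84-56\lambda)$, giving condition 2) and as the announced number of highest weights.

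The main obstacle is the parity bookkeeping in this last step: one must confirm that $84-56\lambda\equiv 4(\mbox{mod}\,8)$ genuinely rules out all mixed-parity quadruples, so that the clean identity holds with no leftover correction terms. Everything else is a transcription of the odd branch of Theorem \ref{x1}; the only structural difference is that the ${\rm SO}(9)$ lattice $\Lambda_0(B_4)$ imposes $\nu_4\equiv 1(\mbox{mod}\,2)$ and thereby discards the even branch (case II of Theorem \ref{x1}) entirely, leaving a single clause rather than the alternative I/II.
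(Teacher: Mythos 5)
Your proposal is correct and follows essentially the same route as the paper: the paper proves Theorem \ref{x2} by citing (\ref{xi5}) together with the odd-$\nu_4$ branch of the proof of Theorem \ref{x1}, which is exactly the substitution $x=\nu_4$, $y=2\nu_3+\nu_4$, $z=2\nu_2+2\nu_3+\nu_4$, $v=2\nu_1+2\nu_2+2\nu_3+\nu_4$ and the mod $8$ parity analysis you carry out. Your explicit verification that $84-56\lambda\equiv 4\ (\mathrm{mod}\ 8)$ forces every representation counted by $L_4(84-56\lambda)$ to be all-odd or all-even (so that the all-odd count is exactly $L_4(84-56\lambda)-L_4(21-14\lambda)$) is precisely the step the paper states more tersely, so the argument matches.
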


\begin{proof}
This statement follows from (\ref{xi5}) and the proof of theorem \ref{x1}.
\end{proof}

\begin{table}[h]
\begin{center}
\begin{tabular}{|c|c|c|c|c|c|}
\hline
$\No$&$\lambda$&$30-20\lambda$&$L_4(30-20\lambda)$&$(\nu_1,\nu_2,\nu_3,\nu_4)$&$\sigma(\lambda)$\\
\hline
$1$&$-\frac{9}{20}$&$39$&$1$&$(2,1,1,1)$&$64$\\
\hline
$2$&$-\frac{4}{5}$&$46$&$1$&$(1,2,1,1)$&$729$\\
\hline
$3$&$-1$&$50$&$1$&$(3,1,1,1)$&$1296$\\
\hline
$4$&$-\frac{21}{20}$&$51$&$1$&$(1,1,2,1)$&$2304$\\
\hline
$5$&$-\frac{6}{5}$&$54$&$1$&$(1,1,1,2)$&$1764$\\
\hline
$6$&$-\frac{27}{20}$&$57$&$1$&$(2,2,1,1)$&$25600$\\
\hline
$7$&$-\frac{8}{5}$&$62$&$1$&$(2,1,2,1)$&$99225$\\
\hline
$8$&$-\frac{33}{20}$&$63$&$1$&$(4,1,1,1)$&$14400$\\
\hline
$9$&$-\frac{7}{4}$&$65$&$1$&$(2,1,1,2)$&$82944$\\
\hline
$10$&$-\frac{9}{5}$&$66$&$1$&$(1,3,1,1)$&$94864$\\
\hline
\end{tabular}
\Table\label{t3} Eigenvalues of Laplacian on $({\rm Sp}(4),\nu)$
\end{center}
\end{table}

\begin{table}[h]
\begin{center}
\begin{tabular}{|c|c|c|c|c|c|}
\hline
$\No$&$\lambda$&$30-20\lambda$&$L_4(30-20\lambda)$&$(\nu_1,\nu_2,\nu_3,\nu_4)$&$\sigma(\lambda)$\\
\hline
$1$&$-\frac{4}{5}$&$46$&$1$&$(1,2,1,1)$&$729$\\
\hline
$2$&$-1$&$50$&$1$&$(3,1,1,1)$&$1296$\\
\hline
$3$&$-\frac{6}{5}$&$54$&$1$&$(1,1,1,2)$&$1764$\\
\hline
$4$&$-\frac{8}{5}$&$62$&$1$&$(2,1,2,1)$&$99225$\\
\hline
$5$&$-\frac{9}{5}$&$66$&$1$&$(1,3,1,1)$&$94864$\\
\hline
$6$&$-2$&$70$&$1$&$(3,2,1,1)$&$352836$\\
\hline
$7$&$-\frac{11}{5}$&$74$&$1$&$(1,2,1,2)$&$627264$\\
\hline
$8$&$-\frac{12}{5}$&$78$&$3$&$(5,1,1,1)$&$2123550$\\
$ $&$ $&$ $&$ $&$(3,1,1,2)$&$ $\\
$ $&$ $&$ $&$ $&$(1,1,3,1)$&$ $\\
\hline
$9$&$-\frac{27}{10}$&$84$&$1$&$(2,2,2,1)$&$16777216$\\
\hline
$10$&$-\frac{14}{5}$&$86$&$1$&$(1,1,1,3)$&$352836$\\
\hline
\end{tabular}
\Table\label{t4} Eigenvalues of Laplacian on  $({\rm PSp}(4),\nu)$
\end{center}
\end{table}

\begin{theorem}
\label{x3}
Let $G={\rm SO}(9)$ is supplied by biinvariant Riemannian metric $\nu$ such that $\nu(e)=-k_{{\rm ad}}$.
 A number $\lambda < 0$ is an eigenvalue of Laplacian on $(G,\nu)$ if and only if the following conditions are fulfilled

1) $-20\lambda\in\mathbb{N}$;

2) natural number $30-20\lambda$ is a sum of squares of four mutually different natural numbers, i.e.
 $L_4(30-20\lambda)>0$.

In addition the number of highest weights $\Lambda$, such that $\lambda(\Lambda)=\lambda$, is equal to $L_4(30-20\lambda)$.
\end{theorem}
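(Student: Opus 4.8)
The plan is to read off Theorem~\ref{x3} from the spectral description of ${\rm Sp}(4)$ recorded in the set $\Xi_3$ of~(\ref{xi6}), proceeding as in the proof of Theorem~\ref{x1} but without the parity analysis forced there by the condition on $\nu_4$. Taking $\gamma=1$, as dictated by the normalization $\nu(e)=-k_{\rm ad}$, I would begin from the Diophantine equation
$$(\nu_1+\nu_2+\nu_3+\nu_4)^2+(\nu_2+\nu_3+\nu_4)^2+(\nu_3+\nu_4)^2+\nu_4^2=30-20\lambda,$$
with $\nu_i\in\mathbb{N}$, which by~(\ref{c1}) and~(\ref{xi6}) governs exactly which $\lambda<0$ are eigenvalues and with what number of contributing highest weights. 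Since the left-hand side is a positive integer, the equation can have a solution only if $30-20\lambda\in\mathbb{Z}$, that is, only if $-20\lambda\in\mathbb{N}$; this yields the first stated condition.

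The crux is the linear substitution
$$x=\nu_4,\quad y=\nu_3+\nu_4,\quad z=\nu_2+\nu_3+\nu_4,\quad v=\nu_1+\nu_2+\nu_3+\nu_4,$$
whose inverse is $\nu_4=x,\ \nu_3=y-x,\ \nu_2=z-y,\ \nu_1=v-z$. I would verify that this is a bijection from $\{(\nu_1,\nu_2,\nu_3,\nu_4)\in\mathbb{N}^4\}$ onto $\{(x,y,z,v)\in\mathbb{N}^4\mid x<y<z<v\}$: each $\nu_i\ge 1$ is equivalent to the corresponding strict inequality among $x,y,z,v$, and---this is the essential simplification over the $B_4$ case---the description of $\Lambda^{+}({\rm Sp}(4))$ in step~7a) places no congruence restriction on the $\nu_i$, so no parity condition survives the substitution. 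Under this change of variables the equation becomes the sum-of-four-squares equation $x^2+y^2+z^2+v^2=30-20\lambda$, and by the very definition~(\ref{squared4}) its solutions with $x<y<z<v$ in $\mathbb{N}$ are counted by $L_4(30-20\lambda)$.

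Both assertions then follow immediately: $\lambda<0$ is an eigenvalue iff the four-squares equation admits a strictly ordered solution, i.e.\ iff $-20\lambda\in\mathbb{N}$ and $L_4(30-20\lambda)>0$, and the highest weights $\Lambda$ with $\lambda(\Lambda)=\lambda$ correspond bijectively to these solutions, whence their number is precisely $L_4(30-20\lambda)$. The only step requiring genuine care is confirming the bijectivity of the substitution and the equivalence of positivity with the strict ordering; the rest is bookkeeping. Unlike Theorems~\ref{x1} and~\ref{x2}, where the odd/even alternatives for $\nu_4$ split the count and produced the correction term $L_4(21-14\lambda)$, the absence of any congruence on the $\nu_i$ here leaves $L_4(30-20\lambda)$ as the final, uncorrected answer.
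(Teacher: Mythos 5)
Your proposal is correct and follows essentially the same route as the paper's own proof: the paper likewise starts from the Diophantine equation of~(\ref{xi6}), applies the identical substitution $x=\nu_4$, $y=\nu_3+\nu_4$, $z=\nu_2+\nu_3+\nu_4$, $v=\nu_1+\nu_2+\nu_3+\nu_4$ (its formula~(\ref{not1})), and concludes that the solutions are counted by $L_4(30-20\lambda)$. The only difference is that you spell out the bijectivity of the substitution and the absence of any congruence condition explicitly, which the paper leaves implicit.
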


\begin{proof}
Following to (\ref{xi6}), let us consider Diophantine equation
\begin{equation}
\label{xx1}
(\nu_1+\nu_2+\nu_3+\nu_4)^2+(\nu_2+\nu_3+\nu_4)^2+(\nu_3+\nu_4)^2+\nu_4^2=30-20\lambda,
\end{equation}
where $\nu_1,\,\nu_2,\,\nu_3,\,\nu_4\in\mathbb{N}$.
It is clear that if the equation (\ref{xx1}) is solvable, then $-20\lambda\in\mathbb{N}$. Let us introduce the following notation
\begin{equation}
\label{not1}
x=\nu_4,\,\,y=\nu_3+\nu_4,\,\,z=\nu_2+\nu_3+\nu_4,\,\,v=\nu_1+\nu_2+\nu_3+\nu_4.
\end{equation}
Then the equation (\ref{xx1}) will written down in the form
\begin{equation}
\label{xx2}
x^2+y^2+z^2+v^2=30-20\lambda,
\end{equation}
moreover the condition (\ref{cont3}) is fulfilled.
Therefore the number of solutions to equation (\ref{xx1})  is equal to $L_4(30-20\lambda)$.
\end{proof}

\begin{theorem}
\label{x4}
Let $G={\rm PSp}(4)$ is supplied by biinvariant Riemannian metric $\nu$ such that $\nu(e)=-k_{{\rm ad}}$.
 A number $\lambda < 0$ is an eigenvalue of Laplacian on $(G,\nu)$ if and only if the following conditions are fulfilled

1) $-10\lambda\in\mathbb{N}$;

2) natural number  $30-20\lambda$ is a sum of squares of four mutually different natural numbers, i.e.
 $L_4(30-20\lambda)>0$.

In addition the number of highest weights $\Lambda$, such that $\lambda(\Lambda)=\lambda$, is equal to  $L_4(30-20\lambda)$.
\end{theorem}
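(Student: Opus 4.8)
The plan is to reduce the problem to four-squares counting exactly as in the proof of Theorem~\ref{x3}, and then to extract the effect of the extra parity condition $\nu_1\equiv\nu_3\,(\mathrm{mod}\,2)$ that distinguishes the admissible weights of $\mathrm{PSp}(4)$ from those of $\mathrm{Sp}(4)$. First I would record that, since $\nu(e)=-k_{\mathrm{ad}}$ gives $\gamma=1$, the set $\Xi_4$ from formula~(\ref{xi7}) shows that $\lambda<0$ is an eigenvalue precisely when
$$(\nu_1+\nu_2+\nu_3+\nu_4)^2+(\nu_2+\nu_3+\nu_4)^2+(\nu_3+\nu_4)^2+\nu_4^2=30-20\lambda$$
has a solution $(\nu_1,\nu_2,\nu_3,\nu_4)\in\mathbb{N}^4$ with $\nu_1\equiv\nu_3\,(\mathrm{mod}\,2)$; solvability already forces $-20\lambda\in\mathbb{N}$. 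Applying the invertible substitution~(\ref{not1}), namely $x=\nu_4$, $y=\nu_3+\nu_4$, $z=\nu_2+\nu_3+\nu_4$, $v=\nu_1+\nu_2+\nu_3+\nu_4$, turns this into $x^2+y^2+z^2+v^2=30-20\lambda$ subject to $x<y<z<v$, the same bijection between $\mathbb{N}^4$-solutions and strictly increasing quadruples that was used for $\mathrm{Sp}(4)$.

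The key step is to transcribe the parity condition into the new variables and to recognise that it is governed entirely by the parity of $30-20\lambda$. Since $\nu_1=v-z$ and $\nu_3=y-x$, the condition $\nu_1\equiv\nu_3\,(\mathrm{mod}\,2)$ is equivalent to $x-y-z+v\equiv 0\,(\mathrm{mod}\,2)$, hence to $x+y+z+v\equiv 0\,(\mathrm{mod}\,2)$. On the other hand $t^2\equiv t\,(\mathrm{mod}\,2)$ for every integer $t$, so
$$x^2+y^2+z^2+v^2\equiv x+y+z+v\pmod 2.$$
Therefore the parity condition holds if and only if $30-20\lambda$ is even, which (given $-20\lambda\in\mathbb{N}$) is exactly the condition $-10\lambda\in\mathbb{N}$.

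It remains to assemble the two cases. When $-10\lambda\in\mathbb{N}$ the target $30-20\lambda$ is even, so every quadruple $x<y<z<v$ solving $x^2+y^2+z^2+v^2=30-20\lambda$ automatically satisfies $\nu_1\equiv\nu_3\,(\mathrm{mod}\,2)$; as in the $\mathrm{Sp}(4)$ case the highest weights are then in bijection with all such quadruples, whose number is $L_4(30-20\lambda)$ by definition, and $\lambda$ is an eigenvalue precisely when this count is positive, i.e.\ when $30-20\lambda$ is a sum of four mutually different natural squares. When $-10\lambda\notin\mathbb{N}$ but $-20\lambda\in\mathbb{N}$, the target is odd and the displayed congruence forces $\Xi_4=\varnothing$, so $\lambda$ is not an eigenvalue. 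This yields both the claimed equivalence and the multiplicity $L_4(30-20\lambda)$. I do not anticipate a genuine obstacle; the only point to watch is that the parity computation be phrased as an equivalence rather than a one-way implication, so that the odd-target case is correctly excluded and the constraint $\nu_1\equiv\nu_3\,(\mathrm{mod}\,2)$ is seen to be vacuous exactly when $-10\lambda\in\mathbb{N}$.
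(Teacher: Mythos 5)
Your proposal is correct and follows essentially the same route as the paper: the same substitution $x=\nu_4$, $y=\nu_3+\nu_4$, $z=\nu_2+\nu_3+\nu_4$, $v=\nu_1+\nu_2+\nu_3+\nu_4$, the same translation of $\nu_1\equiv\nu_3\,(\mathrm{mod}\,2)$ into a parity condition on $x,y,z,v$ (the paper writes it as $x+v\equiv y+z\,(\mathrm{mod}\,2)$, which is the same as your $x+y+z+v\equiv 0\,(\mathrm{mod}\,2)$), and the same $t^2\equiv t\,(\mathrm{mod}\,2)$ argument showing the condition is equivalent to $30-20\lambda$ being even, i.e.\ to $-10\lambda\in\mathbb{N}$. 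Your explicit handling of the case $-20\lambda\in\mathbb{N}$, $-10\lambda\notin\mathbb{N}$ is a slightly more careful write-up of what the paper leaves implicit, but the mathematics is identical.
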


\begin{proof}
Following to (\ref{xi7}), consider Diophantine equation (\ref{xx1}), where $\nu_1\equiv\nu_3({\rm mod}\,2)$, $\nu_1,\,\nu_2,\,\nu_3,\,\nu_4\in\mathbb{N}.$
 After the change of variables (\ref{not1}) the equation (\ref{xx1})  will written down in the form (\ref{xx2}), where
$$\,x,\,y,\,z,\,v\in\mathbb{N},\,\,x+v\equiv y+z({\rm mod}\,2),\,\,x<y<z<v.$$
If $x+v\equiv y+z({\rm mod}\,2)$ then $x^2+v^2\equiv y^2+z^2({\rm mod}\,2)$. Then it follows from (\ref{xx2}) that a number $k=30-20\lambda$ is an even
what is equivalent to relation  $-10\lambda\in\mathbb{N}$. Conversely, if $-10\lambda\in\mathbb{N}$ then $x+v\equiv y+z({\rm mod}\,2)$
on the ground of the equation (\ref{xx2}), where $x,\,y,\,z,\,v\in\mathbb{N}.$
 Therefore the number of solutions $(\nu_1,\nu_2,\nu_3,\nu_4)\in\mathbb{N}^4$ of equation (\ref{xx1}), satisfying the condition
 $\nu_1\equiv\nu_3({\rm mod}\,2)$, is equal to $L_4(30-20\lambda)$.
 \end{proof}

\begin{theorem}
\label{x5}
Let $G={\rm Spin}(8)$ is supplied by biinvariant Riemannian metric $\nu$ such that $\nu(e)=-k_{ad}$.
A number $\lambda < 0$ is an eigenvalue of Laplacian on $(G,\nu)$ in one and one of the following cases (I, II or III)

I. 1) $-12\lambda\equiv 1({\rm mod}\,2)$;

2) natural number   $56-48\lambda$ is a sum of squares of four mutually different odd natural numbers, i.e.
 $L_4(14-12\lambda)<L_4(56-48\lambda)$.

In addition the number of highest weights $\Lambda$, such that $\lambda(\Lambda)=\lambda$, is equal to  $2L_4(56-48\lambda)+L_3(14-12\lambda)$.

II. 1) $-12\lambda\in\mathbb{N}$;

2) natural number   $14-12\lambda$ is a sum of squares of four mutually different natural numbers, i.e. $L_4(14-12\lambda)>0$;

3) natural number   $56-48\lambda$ can't be presented as a sum of squares of four mutually different odd natural numbers,
i.e.  $L_4(56-48\lambda)=L_4(14-12\lambda)$.

In addition the number of highest weights $\Lambda$, such that $\lambda(\Lambda)=\lambda$, is equal to  $2L_4(14-12\lambda)+L_3(14-12\lambda)$.

III. 1)  $-12\lambda\in\mathbb{N}$;

2) natural number   $56-48\lambda$ can't be presented as a sum of squares of four mutually different natural numbers,
i.e. $L_4(56-48\lambda)=0$;

3) natural number   $14-12\lambda$ is a sum of squares of three mutually different natural numbers, i.e.
$L_3(14-12\lambda)>0$.

In addition the number of highest weights $\Lambda$, such that $\lambda(\Lambda)=\lambda$, is equal to  $L_3(14-12\lambda)$.
\end{theorem}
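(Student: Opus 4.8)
The plan is to read off the counting problem directly from the constraint set $\Xi_5$ in~(\ref{xi8}). Since $\nu(e)=-k_{\mathrm{ad}}$ forces $\gamma=1$, the number of highest weights $\Lambda$ with $\lambda(\Lambda)=\lambda$ is exactly the number of quadruples $(\nu_1,\nu_2,\nu_3,\nu_4)\in\mathbb{N}^4$ solving $(2\nu_1+2\nu_2+\nu_3+\nu_4)^2+(2\nu_2+\nu_3+\nu_4)^2+2\nu_3^2+2\nu_4^2=56-48\lambda=:k$; note we are counting lattice points, not the squared-product multiplicity $\sigma$. By the same reasoning as in the opening of the proof of Theorem~\ref{x1}, solvability forces $k$ to be a positive integer, and the analysis modulo $8$ below will sharpen this to $-12\lambda\in\mathbb{N}$.

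The key move is to convert the weighted form into an honest sum of four squares via the identity $2\nu_3^2+2\nu_4^2=(\nu_3+\nu_4)^2+(\nu_3-\nu_4)^2$, the same device as~(\ref{op}) in Theorem~\ref{r}. Setting $v=2\nu_1+2\nu_2+\nu_3+\nu_4$, $z=2\nu_2+\nu_3+\nu_4$, $y=\nu_3+\nu_4$, $s=\nu_3-\nu_4$ turns the equation into $v^2+z^2+y^2+s^2=k$. I would then check the chain $v>z>y>|s|\geq 0$ (the successive gaps are $2\nu_1$, $2\nu_2$, $2\min(\nu_3,\nu_4)$) and observe that $v\equiv z\equiv y\equiv s\equiv\nu_3+\nu_4\ (\mathrm{mod}\ 2)$, so the four entries share a common parity. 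The inverse $\nu_1=(v-z)/2$, $\nu_2=(z-y)/2$, $\nu_3=(y+s)/2$, $\nu_4=(y-s)/2$ shows this is a bijection onto common-parity quadruples $v>z>y>|s|\geq 0$ with $v^2+z^2+y^2+s^2=k$, so the whole problem reduces to counting such quadruples.

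Next I would split on the parity of $-12\lambda$, which (since $k=56-48\lambda$) fixes $k$ modulo $8$: $-12\lambda$ odd gives $k\equiv 4\ (\mathrm{mod}\ 8)$ (Case I) and $-12\lambda$ even gives $k\equiv 0\ (\mathrm{mod}\ 8)$ (Cases II, III); conversely the common parity of the entries forces $k\equiv 0\ (\mathrm{mod}\ 4)$, i.e. $-12\lambda\in\mathbb{N}$. Because squares are $0,1,4\ (\mathrm{mod}\ 8)$, a four-square representation of a number $\equiv 4\ (\mathrm{mod}\ 8)$ must be all-odd or all-even, and one of a number $\equiv 0\ (\mathrm{mod}\ 8)$ must be all-even; in either case the common-parity constraint from the previous step is automatic, so I only have to organize the ordinary representations of $k$. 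Within each residue I split on the sign of $s$: the solutions with $s>0$ and $s<0$ are interchanged by $\nu_3\leftrightarrow\nu_4$ and together contribute a factor $2$, while $s=0$ forces $\nu_3=\nu_4$, makes $v,z,y$ all even, and, after dividing by $2$, becomes a representation of $k/4=14-12\lambda$ as a sum of three distinct squares, that is $L_3(14-12\lambda)$ (invoking Theorem~\ref{th} and the definition of $L_3$ from Theorem~\ref{3sq}).

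Finally I would assemble the cases. For $s\neq 0$ the decreasing quadruples $v>z>y>|s|>0$ summing to $k$ are precisely those counted by $L_4(k)$, and the all-even ones correspond via halving to representations of $k/4$, i.e. to $L_4(14-12\lambda)$; hence the all-odd part equals $L_4(56-48\lambda)-L_4(14-12\lambda)$. For $k\equiv 4\ (\mathrm{mod}\ 8)$ this gives the total $2L_4(56-48\lambda)+L_3(14-12\lambda)$, with a solution existing exactly when the all-odd part is positive, i.e. when $L_4(14-12\lambda)<L_4(56-48\lambda)$, which is Case I. For $k\equiv 0\ (\mathrm{mod}\ 8)$ every representation is all-even, so $L_4(56-48\lambda)=L_4(14-12\lambda)$ and the count collapses to $2L_4(14-12\lambda)+L_3(14-12\lambda)$; this is Case II when $L_4(14-12\lambda)>0$, and it degenerates to $L_3(14-12\lambda)$ (Case III) when no four-distinct representation survives but a three-distinct one does. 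I expect the main obstacle to be precisely this bookkeeping modulo $8$: proving that the common-parity constraint is forced, that the strict inequality $L_4(14-12\lambda)<L_4(56-48\lambda)$ detects exactly the all-odd representations, and, subtlest of all, that the three cases are genuinely mutually exclusive and exhaustive, in particular that for $k\equiv 4\ (\mathrm{mod}\ 8)$ the mere existence of a highest weight coincides with the existence of a four-distinct-odd representation. Throughout I would rely on the representation counts and the definitions of $L_3$ and $L_4$ from Theorems~\ref{3sq} and~\ref{4sqnew}.
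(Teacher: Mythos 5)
Your reduction and the resulting counts are correct and coincide with the paper's: the paper also converts $2\nu_3^2+2\nu_4^2$ into $(\nu_3+\nu_4)^2+(\nu_3-\nu_4)^2$ and counts monotone quadruples, the only difference being that it runs three separate substitutions keyed to the parity pattern of $(\nu_3,\nu_4)$ (getting $2L_4(56-48\lambda)-2L_4(14-12\lambda)$ for $\nu_3+\nu_4$ odd, $L_3(14-12\lambda)$ for $\nu_3=\nu_4$, and $2L_4(14-12\lambda)$ for $\nu_3\neq\nu_4$ of equal parity), while you fold all three into one signed substitution. Up to and including the totals $2L_4(56-48\lambda)+L_3(14-12\lambda)$ (for $-12\lambda$ odd) and $2L_4(14-12\lambda)+L_3(14-12\lambda)$ (for $-12\lambda$ even, where also $L_4(56-48\lambda)=L_4(14-12\lambda)$), your argument is sound.

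The genuine gap is the one you flag yourself: the claim that for $k\equiv 4\pmod 8$ ``a solution exists exactly when the all-odd part is positive.'' The direction you need --- existence of any solution (equivalently $L_4(k)>0$ or $L_3(k/4)>0$) implies $L_4(k)>L_4(k/4)$ --- is a nontrivial number-theoretic assertion that nothing in your setup delivers, and your whole case assembly (Case I $\Leftrightarrow$ $k\equiv 4\pmod 8$, Cases II/III $\Leftrightarrow$ $k\equiv 0\pmod 8$) leans on it. Note, however, that this obstacle is of your own making: in the theorem, conditions II.1) and III.1) read $-12\lambda\in\mathbb{N}$, not ``$-12\lambda$ even,'' so the three cases are separated purely by the relations among $L_4(56-48\lambda)$, $L_4(14-12\lambda)$, $L_3(14-12\lambda)$, not by parity. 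With that reading, the statement ``$\lambda$ is an eigenvalue iff exactly one of I, II, III holds,'' together with all three multiplicity formulas, follows formally from your two counting formulas using only $L_4(k/4)\le L_4(k)$ (double any representation of $k/4$) and your mod-8 observations; no extra input is required, and in fact the paper's own proof stops at the counting formulas and leaves exactly this bookkeeping implicit. If you do want your parity-matched assembly, the missing claim is true but must be proved, e.g.: for odd $k'=a^2+b^2+c^2$ with $0<a<b<c$, the identity $(a+b+c)^2+(a+b-c)^2+(a-b+c)^2+(a-b-c)^2=4k'$ exhibits $4k'$ as a sum of four distinct odd squares (the entries are odd, hence nonzero, and pairwise distinct in absolute value precisely because $a,b,c$ are positive and distinct), and the analogous four-variable identity handles $k'=a^2+b^2+c^2+d^2$; hence $L_3(k/4)>0$ or $L_4(k/4)>0$ already forces $L_4(k)>L_4(k/4)$, which closes your ``only if'' direction.
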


\begin{table}[h]
\begin{center}
\begin{tabular}{|c|c|c|c|c|c|c|c|c|}
\hline
$\No$&$\lambda$&$k_1=14-12\lambda$&$4k_1$&$L_4(k_1)$&$L_4(4k_1)$&$L_3(k_1)$&$(\nu_1,\nu_2,\nu_3,\nu_4)$&$\sigma(\lambda)$\\
\hline
$1$&$-\frac{7}{12}$&$21$&$84$&$0$&$1$&$1$&$(2,1,1,1)$&$192$\\
$ $&$ $&$ $&$ $&$ $&$ $&$ $&$(1,1,2,1)$&$ $\\
$ $&$ $&$ $&$ $&$ $&$ $&$ $&$(1,1,1,2)$&$ $\\
\hline
$2$&$-1$&$26$&$104$&$0$&$0$&$1$&$(1,2,1,1)$&$784$\\
\hline
$3$&$-\frac{5}{4}$&$29$&$116$&$0$&$1$&$1$&$(1,1,2,2)$&$9408$\\
$ $&$ $&$ $&$ $&$ $&$ $&$ $&$(2,1,2,1)$&$ $\\
$ $&$ $&$ $&$ $&$ $&$ $&$ $&$(2,1,1,2)$&$ $\\
\hline
$4$&$-\frac{4}{3}$&$30$&$120$&$1$&$1$&$1$&$(3,1,1,1)$&$3675$\\
$ $&$ $&$ $&$ $&$ $&$ $&$ $&$(1,1,3,1)$&$ $\\
$ $&$ $&$ $&$ $&$ $&$ $&$ $&$(1,1,1,3)$&$ $\\
\hline
$5$&$-\frac{7}{4}$&$35$&$140$&$0$&$1$&$1$&$(2,2,1,1)$&$76800$\\
$ $&$ $&$ $&$ $&$ $&$ $&$ $&$(1,2,1,2)$&$ $\\
$ $&$ $&$ $&$ $&$ $&$ $&$ $&$(1,2,2,1)$&$ $\\
\hline
\end{tabular}
\Table\label{t5} Eigenvalues of Laplacian on  $({\rm Spin}(8),\nu).$ I
\end{center}
\end{table}

\begin{table}[h]
\begin{center}
\begin{tabular}{|c|c|c|c|c|c|c|c|c|}
\hline
$\No$&$\lambda$&$k_1=14-12\lambda$&$4k_1$&$L_4(k_1)$&$L_4(4k_1)$&$L_3(k_1)$&$(\nu_1,\nu_2,\nu_3,\nu_4)$&$\sigma(\lambda)$\\
\hline
$6$&$-2$&$38$&$152$&$0$&$0$&$1$&$(2,1,2,2)$&$122500$\\
\hline
$7$&$-\frac{25}{12}$&$39$&$156$&$1$&$3$&$0$&$(3,1,2,1)$&$301056$\\
$ $&$ $&$ $&$ $&$ $&$ $&$ $&$(3,1,1,2)$&$ $\\
$ $&$ $&$ $&$ $&$ $&$ $&$ $&$(1,1,3,2)$&$ $\\
$ $&$ $&$ $&$ $&$ $&$ $&$ $&$(1,1,2,3)$&$ $\\
$ $&$ $&$ $&$ $&$ $&$ $&$ $&$(2,1,3,1)$&$ $\\
$ $&$ $&$ $&$ $&$ $&$ $&$ $&$(2,1,1,3)$&$ $\\
\hline
$8$&$-\frac{9}{4}$&$41$&$164$&$0$&$1$&$1$&$(4,1,1,1)$&$37632$\\
$ $&$ $&$ $&$ $&$ $&$ $&$ $&$(1,1,4,1)$&$ $\\
$ $&$ $&$ $&$ $&$ $&$ $&$ $&$(1,1,1,4)$&$ $\\
\hline
$9$&$-\frac{7}{3}$&$42$&$168$&$0$&$0$&$1$&$(1,3,1,1)$&$90000$\\
\hline
$10$&$-\frac{31}{12}$&$45$&$180$&$0$&$1$&$1$&$(1,2,2,2)$&$2116800$\\
$ $&$ $&$ $&$ $&$ $&$ $&$ $&$(2,2,2,1)$&$ $\\
$ $&$ $&$ $&$ $&$ $&$ $&$ $&$(2,2,1,2)$&$ $\\
\hline
\end{tabular}
\Table\label{t6} Eigenvalues of Laplacian on $({\rm Spin}(8),\nu)$. II
\end{center}
\end{table}

\begin{table}[h]
\begin{center}
\begin{tabular}{|c|c|c|c|c|c|c|c|c|}
\hline
$\No$&$\lambda$&$k_1=14-12\lambda$&$4k_1$&$L_4(k_1)$&$L_4(4k_1)$&$L_3(k_1)$&$(\nu_1,\nu_2,\nu_3,\nu_4)$&$\sigma(\lambda)$\\
\hline
$1$&$-\frac{7}{12}$&$21$&$84$&$0$&$1$&$1$&$(1,1,1,2)$&$64$\\
\hline
$2$&$-1$&$26$&$104$&$0$&$0$&$1$&$(1,2,1,1)$&$784$\\
\hline
$3$&$-\frac{5}{4}$&$29$&$116$&$0$&$1$&$1$&$(2,1,2,1)$&$3136$\\
\hline
$4$&$-\frac{4}{3}$&$30$&$120$&$1$&$1$&$1$&$(1,1,1,3)$&$3675$\\
$ $&$ $&$ $&$ $&$ $&$ $&$ $&$(1,1,3,1)$&$ $\\
$ $&$ $&$ $&$ $&$ $&$ $&$ $&$(3,1,1,1)$&$ $\\
\hline
$5$&$-\frac{7}{4}$&$35$&$140$&$0$&$1$&$1$&$(1,2,1,2)$&$25600$\\
\hline
$6$&$-2$&$38$&$152$&$0$&$0$&$1$&$(2,1,2,2)$&$122500$\\
\hline
$7$&$-\frac{25}{12}$&$39$&$156$&$1$&$3$&$0$&$(3,1,1,2)$&$100352$\\
$ $&$ $&$ $&$ $&$ $&$ $&$ $&$(1,3,1,2)$&$ $\\
\hline
$8$&$-\frac{9}{4}$&$41$&$164$&$0$&$1$&$1$&$(1,1,1,4)$&$12544$\\
\hline
$9$&$-\frac{7}{3}$&$42$&$168$&$0$&$0$&$1$&$(1,3,1,1)$&$90000$\\
\hline
$10$&$-\frac{31}{12}$&$45$&$180$&$0$&$1$&$1$&$(2,2,2,1)$&$705600$\\
\hline
\end{tabular}
\Table\label{t7} Eigenvalues of Laplacian on  $({\rm SO}(8),\nu)$
\end{center}
\end{table}

\begin{proof}
Following to (\ref{xi8}), let us consider Diophantine equation
\begin{equation}
\label{xx7}
(2\nu_1+2\nu_2+\nu_3+\nu_4)^2+(2\nu_2+\nu_3+\nu_4)^2+(\nu_3+\nu_4)^2+(\nu_3-\nu_4)^2=56-48\lambda,
\end{equation}
where $\nu_1,\,\nu_2,\,\nu_3,\,\nu_4\in\mathbb{N}.$
It is clear that if the equation (\ref{xx7}) is solvable, then $-48\lambda\in\mathbb{N}$.

Assume at first that the equation (\ref{xx7}) has a solution $(\nu_1,\nu_2,\nu_3,\nu_4)\in\mathbb{N}^4$ such that $\nu_3+\nu_4\equiv 1({\rm mod}\,2)$.
Since  $\nu_3$ and $\nu_4$ occur symmetrically in (\ref{xx7}),  then
we can suppose for definiteness that $\nu_3>\nu_4$. Let us introduce the following notation
\begin{equation}
\label{sos1}
x=\nu_3-\nu_4,\,\,y=\nu_3+\nu_4,\,\,z=2\nu_2+\nu_3+\nu_4,\,\,v=2\nu_1+2\nu_2+\nu_3+\nu_4.
\end{equation}
Then the equation (\ref{xx7}) will written down in the form
\begin{equation}
\label{xx8}
x^2+y^2+z^2+v^2=56-48\lambda,
\end{equation}
where (\ref{cont2}).  It follows from the proof of theorem \ref{x1} that if the equation (\ref{xx8})  is solvable under condition
(\ref{cont2}), then $56-48\lambda\equiv 4({\rm mod}\,8)$ what is equivalent to relation   $-12\lambda\equiv 1({\rm mod}\,2)$.
Therefore the number of solutions $(\nu_1,\nu_2,\nu_3,\nu_4)\in\mathbb{N}^4$ of equation (\ref{xx7}), satisfying the condition
$\nu_3+\nu_4\equiv 1({\rm mod}\,2)$, is equal to $2L_4(56-48\lambda)-2L_4(14-12\lambda)$.

Let us suppose now that the equation (\ref{xx7}) has a solution $(\nu_1,\nu_2,\nu_3,\nu_4)$, where $\nu_3=\nu_4$. Then $-12\lambda\in\mathbb{N}$.
After the change of variables
\begin{equation}
\label{sos2}
x=\nu_3,\,\,y=\nu_2+\nu_3,\,\,z=\nu_1+\nu_2+\nu_3
\end{equation}
the equation (\ref{xx7}) will written down in the form
\begin{equation}
\label{qaq}
x^2+y^2+z^2=14-12\lambda,
\end{equation}
in addition
$x,\,y,\,z\in\mathbb{N}$, $x<y<z.$ Therefore the number of solutions $(\nu_1,\nu_2,\nu_3,\nu_4)\in\mathbb{N}^4$ to equation (\ref{xx7}),
satisfying condition $\nu_3=\nu_4$, is equal to $L_3(14-12\lambda)$.

Let suppose at the end that the equation  (\ref{xx7}) has a solution  $(\nu_1,\nu_2,\nu_3,\nu_4)\in\mathbb{N}^4$ such that $\nu_3\neq\nu_4$ and
$\nu_3\equiv\nu_4({\rm mod}\,2)$.
Then $-12\lambda\in\mathbb{N}$. Let us set for definiteness that  $\nu_3>\nu_4$ and introduce the following notation
\begin{equation}
\label{sos3}
x=\frac{\nu_3-\nu_4}{2},\,\,y=\frac{\nu_3+\nu_4}{2},\,\,z=\nu_2+\frac{\nu_3+\nu_4}{2},\,\,v=\nu_1+\nu_2+\frac{\nu_3+\nu_4}{2}.
\end{equation}
Then the equation (\ref{xx7}) will written down in the form
\begin{equation}
\label{qaq2}
x^2+y^2+z^2+v^2=14-12\lambda,
\end{equation}
where (\ref{cont3}).
Therefore the number of solutions $(\nu_1,\nu_2,\nu_3,\nu_4)\in\mathbb{N}^4$ of equation (\ref{xx7}), satisfying the condition $\nu_3\neq\nu_4$,
$\nu_3\equiv\nu_4({\rm mod}\,2)$, is equal to $2L_4(14-12\lambda)$.
\end{proof}

\begin{theorem}
\label{x04}
Let $G={\rm SO}(8)$ is supplied by biinvariant metric $\nu$ such that $\nu(e)=-k_{ad}$. A number
$\lambda < 0$ is an eigenvalue of Laplacian on $(G,\nu)$ for one and only one of the following cases

I. 1) $-12\lambda\equiv 1({\rm mod}\,2)$;

2) natural number  $56-48\lambda$ is a sum of squares of four mutually odd different numbers, i.e.
$L_4(56-48\lambda)>L_4(14-12\lambda)$.

In addition the number of highest weights $\Lambda,$ such that $\lambda(\Lambda)=\lambda$, is equal to  $L_4(56-48\lambda)-L_4(14-12\lambda)$.

II. 1) $-6\lambda\in\mathbb{N}$;

2) natural number  $14-12\lambda$ is a sum of squares of four mutually different numbers, i.e. $L_4(14-12\lambda)>0$;

3) natural number  $56-48\lambda$ can't be presented as a sum of squares of four mutually different odd numbers, i.e.
$L_4(56-48\lambda)=L_4(14-12\lambda)$.

In addition the number of highest weights $\Lambda,$ such that $\lambda(\Lambda)=\lambda$, is equal to  $2L_4(14-12\lambda)+L_3(14-12\lambda)$.

III. 1) $-6\lambda\in\mathbb{N}$;

2) natural number $56-48\lambda$ can't be presented as a sum of squares of four mutually different numbers, i.e.  $L_4(56-48\lambda)=0$;

3) natural number $14-12\lambda$ is a sum of squares of three mutually different numbers, i.e. $L_3(14-12\lambda)>0$.

In addition the number of highest weights $\Lambda,$ such that $\lambda(\Lambda)=\lambda$, is equal to  $L_3(14-12\lambda)$.

\end{theorem}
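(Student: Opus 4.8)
The plan is to count, exactly as in the proof of Theorem \ref{x5}, the solutions $(\nu_1,\nu_2,\nu_3,\nu_4)\in\mathbb{N}^4$ of the Diophantine equation (\ref{xx7}), but now subject to the extra congruence $\nu_3\equiv\nu_1({\rm mod}\,2)$ coming from (\ref{xi9}); this count is precisely the number of highest weights $\Lambda$ with $\lambda(\Lambda)=\lambda$. I would reuse verbatim the three-way split of the proof of Theorem \ref{x5} according to the parities of $\nu_3,\nu_4$: case A, $\nu_3+\nu_4\equiv 1({\rm mod}\,2)$; case B, $\nu_3=\nu_4$; case C, $\nu_3\neq\nu_4$ with $\nu_3\equiv\nu_4({\rm mod}\,2)$; and in each case apply the same substitutions (\ref{sos1}), (\ref{sos2}), (\ref{sos3}), respectively. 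The whole content of the argument is then to track what the condition $\nu_3\equiv\nu_1({\rm mod}\,2)$ becomes under each substitution.

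In case A the two solutions $(\nu_1,\nu_2,\nu_3,\nu_4)$ and $(\nu_1,\nu_2,\nu_4,\nu_3)$ obtained by interchanging the values of $\nu_3$ and $\nu_4$ are distinct, and since $\nu_3,\nu_4$ have opposite parity, exactly one of them satisfies $\nu_3\equiv\nu_1({\rm mod}\,2)$. Hence the congruence selects precisely one member of every such pair, so it halves the count $2L_4(56-48\lambda)-2L_4(14-12\lambda)$ of Theorem \ref{x5} to $L_4(56-48\lambda)-L_4(14-12\lambda)$; as there, these all-odd representations force $56-48\lambda\equiv 4({\rm mod}\,8)$, that is $-12\lambda\equiv 1({\rm mod}\,2)$. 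This produces part I.

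For cases B and C I would express $\nu_1$ and $\nu_3$ through the substitution variables and check that $\nu_3\equiv\nu_1({\rm mod}\,2)$ is equivalent to $x+y+z$ (respectively $x+y+z+v$) being even. Using $n^2\equiv n({\rm mod}\,2)$ together with (\ref{qaq}) (respectively (\ref{qaq2})), this parity coincides with the parity of $14-12\lambda$, so the congruence holds for every representation when $14-12\lambda$ is even, i.e. $-6\lambda\in\mathbb{N}$, and for none when $14-12\lambda$ is odd. Thus, when $-6\lambda\in\mathbb{N}$, cases B and C contribute the full $L_3(14-12\lambda)$ and $2L_4(14-12\lambda)$ respectively, while case A then contributes nothing, since in this regime $56-48\lambda\equiv 0({\rm mod}\,8)$ and so admits no all-odd representation.

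Finally I would assemble the three disjoint regimes. The conditions ``$-12\lambda$ odd'' and ``$-6\lambda\in\mathbb{N}$'' are mutually exclusive, which separates part I from parts II--III. When $-6\lambda\in\mathbb{N}$ one has $56-48\lambda\equiv 0({\rm mod}\,8)$, and the residue argument modulo $8$ already used in the proof of Theorem \ref{x1} shows that every representation of $56-48\lambda$ into four distinct squares is all-even, whence $L_4(56-48\lambda)=L_4(14-12\lambda)$; this turns the dichotomy $L_4(14-12\lambda)>0$ versus $L_4(14-12\lambda)=0$ (with $L_3(14-12\lambda)>0$) into exactly parts II and III, with multiplicities $2L_4(14-12\lambda)+L_3(14-12\lambda)$ and $L_3(14-12\lambda)$. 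The main obstacle is purely the parity bookkeeping: correctly translating $\nu_3\equiv\nu_1({\rm mod}\,2)$ through the three substitutions and justifying the ``exactly one of the swapped pair'' claim in case A; once these are in place, the number-theoretic content is inherited directly from Theorem \ref{x5} and the supporting lemmas.
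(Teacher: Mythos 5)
Your proposal is correct, and its skeleton coincides with the paper's own proof: the paper likewise derives Theorem \ref{x04} by counting solutions of equation (\ref{xx7}) subject to the congruence $\nu_1\equiv\nu_3({\rm mod}\,2)$ coming from (\ref{xi9}), uses the same three-way split by the parities of $\nu_3,\nu_4$, and the same substitutions (\ref{sos1}), (\ref{sos2}), (\ref{sos3}); your handling of cases B and C (the congruence is equivalent to $x+y+z$, respectively $x+y+z+v$, being even, hence to $14-12\lambda$ being even, i.e. $-6\lambda\in\mathbb{N}$) is literally the paper's argument. The genuine divergence is in case A, and there your route is the better one. The paper fixes $\nu_3>\nu_4$ ``without loss of generality'' (shaky, since swapping $\nu_3,\nu_4$ destroys the congruence $\nu_1\equiv\nu_3$), translates the congruence into condition (\ref{as}), namely $x+y+z+v\equiv 2({\rm mod}\,4)$, and then asserts that for odd $x,y,z,v$ this is equivalent to $x^2+y^2+z^2+v^2\equiv 4$ or $12({\rm mod}\,16)$ --- which is false as stated: for $(x,y,z,v)=(1,3,5,7)$ one has $x^2+y^2+z^2+v^2=84\equiv 4({\rm mod}\,16)$ while $x+y+z+v=16\equiv 0({\rm mod}\,4)$. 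The paper's final count $L_4(56-48\lambda)-L_4(14-12\lambda)$ is nevertheless right, because the solutions with $\nu_3<\nu_4$, discarded by the ``WLOG'', supply exactly the all-odd representations with $x+y+z+v\equiv 0({\rm mod}\,4)$. Your pairing argument --- equation (\ref{xx7}) is symmetric in $\nu_3,\nu_4$, these have opposite parities in case A, so exactly one member of each swapped pair satisfies $\nu_1\equiv\nu_3({\rm mod}\,2)$, whence the ${\rm Spin}(8)$ count $2L_4(56-48\lambda)-2L_4(14-12\lambda)$ of Theorem \ref{x5} is halved --- reaches the same number directly and sidesteps both defects. Your closing assembly (mutual exclusivity of ``$-12\lambda$ odd'' and ``$-6\lambda\in\mathbb{N}$'', and the identity $L_4(56-48\lambda)=L_4(14-12\lambda)$ being automatic for $-6\lambda\in\mathbb{N}$ by the mod-$8$ residue argument) is sound and makes explicit a step the paper leaves implicit.
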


\begin{table}[h]
\begin{center}
\begin{tabular}{|c|c|c|c|c|c|c|c|}
\hline
$\No$&$\lambda$&$14-12\lambda$&$L_4(14-12\lambda)$&$L_3(14-12\lambda)$&$(\nu_1,\nu_2,\nu_3,\nu_4)$&$\sigma(\lambda)$\\
\hline
$1$&$-1$&$26$&$0$&$1$&$(1,2,1,1)$&$784$\\
\hline
$2$&$-\frac{4}{3}$&$30$&$1$&$1$&$(1,1,1,3)$&$3675$\\
$ $&$ $&$ $&$ $&$ $&$(1,1,3,1)$&$ $\\
$ $&$ $&$ $&$ $&$ $&$(3,1,1,1)$&$ $\\
\hline
$3$&$-2$&$38$&$0$&$1$&$(2,1,2,2)$&$122500$\\
\hline
$4$&$-\frac{7}{3}$&$42$&$0$&$1$&$(1,3,1,1)$&$90000$\\
\hline
$5$&$-\frac{8}{3}$&$46$&$1$&$1$&$(3,2,1,1)$&$964467$\\
$ $&$ $&$ $&$ $&$ $&$(1,2,3,1)$&$ $\\
$ $&$ $&$ $&$ $&$ $&$(1,2,1,3)$&$ $\\
\hline
$6$&$-3$&$50$&$1$&$1$&$(1,1,3,3)$&$2116800$\\
$ $&$ $&$ $&$ $&$ $&$(3,1,3,1)$&$ $\\
$ $&$ $&$ $&$ $&$ $&$(3,1,1,3)$&$ $\\
\hline
$7$&$-\frac{10}{3}$&$54$&$1$&$1$&$(5,1,1,1)$&$259308$\\
$ $&$ $&$ $&$ $&$ $&$(1,1,5,1)$&$ $\\
$ $&$ $&$ $&$ $&$ $&$(1,1,1,5)$&$ $\\
\hline
$8$&$-\frac{7}{2}$&$56$&$0$&$1$&$(2,2,2,2)$&$16777216$\\
\hline
$9$&$-4$&$62$&$1$&$2$&$(1,4,1,1)$&$44222500$\\
$ $&$ $&$ $&$ $&$ $&$(4,1,2,2)$&$ $\\
$ $&$ $&$ $&$ $&$ $&$(2,1,4,2)$&$ $\\
$ $&$ $&$ $&$ $&$ $&$(2,1,2,4)$&$ $\\
\hline
$10$&$-\frac{13}{3}$&$66$&$1$&$1$&$(3,3,1,1)$&$55780032$\\
$ $&$ $&$ $&$ $&$ $&$(1,3,3,1)$&$ $\\
$ $&$ $&$ $&$ $&$ $&$(1,3,1,3)$&$ $\\
\hline
\end{tabular}
\Table\label{t8} Eigenvalues of Laplacian on   $({\rm PSO}(8),\nu)$
\end{center}
\end{table}

\begin{proof}
Following to (\ref{xi9}), let us consider Diophantine equation (\ref{xx7}),
where $\nu_1\equiv\nu_3({\rm mod}\,2)$, $\nu_1,\,\nu_2,\,\nu_3,\,\nu_4\in\mathbb{N}$.
It is clear that if the equation (\ref{xx7}) is solvable then $-48\lambda\in\mathbb{N}$.

Assume at first that the equation (\ref{xx7}) has a solution $(\nu_1,\nu_2,\nu_3,\nu_4)\in\mathbb{N}^4$ such that $\nu_1\equiv\nu_3(\mbox{mod}\,2)$ and
$\nu_3+\nu_4\equiv 1(\mbox{mod}\,2)$. We can suppose without loss of generality that $\nu_3>\nu_4$.
After the change of variables (\ref{sos1}) the equation (\ref{xx7}) will written down in the form  (\ref{xx8}),
moreover $x,\,y,\,z,\,v\in\mathbb{N}$,
\begin{equation}
\label{as}
x\equiv y\equiv z\equiv v\equiv 1({\rm mod}\,2),\,\,x<y<z<v,\,\,x+y+z+v\equiv 2({\rm mod}\,4).
\end{equation}
Let $x$, $y$, $z$, $v$ are odd. In this case it is easy to prove that the condition $x+y+z+v\equiv 2(\mbox{mod}\,4)$ is equivalent to condition
 $x^2+y^2+z^2+v^2\equiv 4(\mbox{mod}\,16)$ or $x^2+y^2+z^2+v^2\equiv 12(\mbox{mod}\,16)$. Then on the ground of (\ref{xx8}) we get
 $56-48\lambda\equiv 4(\mbox{mod}\,16)$ what is equivalent to relation
$-12\lambda\equiv 3(\mbox{mod}\,4)$, or $56-48\lambda\equiv 12(\mbox{mod}\,16)$ what is equivalent
to relation $-12\lambda\equiv 1(\mbox{mod}\,4)$. Thus if  $x$, $y$, $z$, $v$ are odd,  then the condition $x+y+z+v\equiv 2(\mbox{mod}\,4)$ is equivalent to condition
 $-12\lambda\equiv 1(\mbox{mod}\,2)$. In addition the number of solutions $(\nu_1,\nu_2,\nu_3,\nu_4)\in\mathbb{N}^4$  to equation (\ref{xx7}), 
 satisfying  conditions
 $\nu_1\equiv\nu_3(\mbox{mod}\,2)$ and $\nu_3+\nu_4\equiv 1(\mbox{mod}\,2)$, is equal to $L_4(56-48\lambda)-L_4(14-12\lambda)$.

Suppose now that the equation (\ref{xx7}) has a solution $(\nu_1,\nu_2,\nu_3,\nu_4)\in\mathbb{N}^4$ such that $\nu_1\equiv\nu_3(\mbox{mod}\,2)$ and
$\nu_3=\nu_4$. After the change of variables
(\ref{sos2}) the equation (\ref{xx7}) can be written down in the form (\ref{qaq}), furthermore
$$x,\,y,\,z\in\mathbb{N},\,\,x<y<z,\,\,z\equiv x+y(\mbox{mod}\,2).$$
If $z\equiv x+y(\mbox{mod}\,2)$ then $z^2\equiv x^2+y^2(\mbox{mod}\,2)$. Then $14-12\lambda$ is an even number what is
equivalent to relation  $-6\lambda\in\mathbb{N}$.
Conversely, if $-6\lambda\in\mathbb{N}$ then $z\equiv x+y(\mbox{mod}\,2)$.
In addition the number of solutions $(\nu_1,\nu_2,\nu_3,\nu_4)\in\mathbb{N}^4$ to equation (\ref{xx7}), satisfying conditions
$\nu_1\equiv\nu_3(\mbox{mod}\,2)$ and  $\nu_3=\nu_4$, is equal to $L_3(14-12\lambda)$.

Suppose now that the equation (\ref{xx7}) has a solution $(\nu_1,\nu_2,\nu_3,\nu_4)\in\mathbb{N}^4$ such that
$\nu_1\equiv\nu_3\equiv\nu_4(\mbox{mod}\,2)$ and $\nu_3\neq\nu_4$. Then $-12\lambda\in\mathbb{N}$.
We can suppose without loss of generality that $\nu_3>\nu_4$.  After the change of variables (\ref{sos3}) the equation
 (\ref{xx7}) will written down in the form (\ref{qaq2}), moreover
$$x,\,y,\,z,\,v\in\mathbb{N},\,\,x<y<z<v,\,\,v\equiv x+y+z(\mbox{mod}\,2).$$
If $v\equiv x+y+z(\mbox{mod}\,2)$ then $v^2\equiv x^2+y^2+z^2(\mbox{mod}\,2)$. Then it follows from (\ref{qaq2}) that a natural number 
 $14-12\lambda$ is even what is equivalent  to relation $-6\lambda\in\mathbb{N}$. Conversely, if $-6\lambda\in\mathbb{N}$ then we get 
$v\equiv x+y+z(\mbox{mod}\,2)$ in the ground of the equation (\ref{qaq2}), where $x,\,y,\,z\in\mathbb{N}$.
Therefore the number of solutions $(\nu_1,\nu_2,\nu_3,\nu_4)\in\mathbb{N}^4$ to equation (\ref{xx7}), satisfying
the condition $\nu_1\equiv\nu_3\equiv\nu_4(\mbox{mod}\,2)$ and $\nu_3\neq\nu_4$, is equal to $2L_4(14-12\lambda)$.
\end{proof}

\begin{theorem}
\label{x05}
Let $G={\rm PSO}(8)$ is supplied by biinvariant metric $\nu$ such that $\nu(e)=-k_{ad}$. A number
$\lambda < 0$ is an eigenvalue of Laplacian on $(G,\nu)$ for one and only one of the following cases (I or II)

I. 1) $-6\lambda\in\mathbb{N}$;

2) natural number  $14-12\lambda$ is a sum of squares of four mutually different numbers, i.e. $L_4(14-12\lambda)>0$.

In addition the number of highest weights $\Lambda,$ such that $\lambda(\Lambda)=\lambda$, is equal to  $2L_4(14-12\lambda)+L_3(14-12\lambda)$.

II. 1) $-6\lambda\in\mathbb{N}$;

2) natural number  $14-12\lambda$ can't be presented as a sum of squares of four mutually different numbers, i.e. $L_4(14-12\lambda)=0$.

3) natural number  $14-12\lambda$ is a sum of squares of three mutually different numbers, i.e. $L_3(14-12\lambda)>0$.

In addition the number of highest weights $\Lambda,$ such that $\lambda(\Lambda)=\lambda$, is equal to $L_3(14-12\lambda)$.
\end{theorem}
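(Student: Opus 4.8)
The plan is to reuse the Diophantine set-up from the proof of Theorem~\ref{x04} and then exploit that the characteristic lattice of $\mathrm{PSO}(8)$ imposes a strictly stronger congruence. Starting from (\ref{xi10}), the number of highest weights $\Lambda$ with $\lambda(\Lambda)=\lambda$ equals the number of solutions $(\nu_1,\nu_2,\nu_3,\nu_4)\in\mathbb{N}^4$ of equation (\ref{xx7}) subject to the extra condition $\nu_1\equiv\nu_3\equiv\nu_4(\mathrm{mod}\,2)$, the identification of highest weights with such tuples being exactly the parametrization of $\Lambda^{+}(\mathrm{PSO}(8))$. The decisive observation is that this condition forces $\nu_3\equiv\nu_4(\mathrm{mod}\,2)$, so $\nu_3+\nu_4$ is always even. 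Consequently the first of the three subcases treated in the proof of Theorem~\ref{x04}, namely the branch $\nu_3+\nu_4\equiv 1(\mathrm{mod}\,2)$ that produced the term in $56-48\lambda$, does not arise here at all. Thus only solutions with $\nu_3\equiv\nu_4(\mathrm{mod}\,2)$ survive, and I would split them according to whether $\nu_3=\nu_4$ or $\nu_3\neq\nu_4$.

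For the subcase $\nu_3=\nu_4$ I would apply the substitution (\ref{sos2}), which turns (\ref{xx7}) into the three-square equation (\ref{qaq}) with $x<y<z$. Exactly as in Theorem~\ref{x04}, the congruence $\nu_1\equiv\nu_3(\mathrm{mod}\,2)$ is equivalent to $z\equiv x+y(\mathrm{mod}\,2)$, which by $n^2\equiv n(\mathrm{mod}\,2)$ holds automatically once $14-12\lambda$ is even, i.e.\ once $-6\lambda\in\mathbb{N}$; this subcase therefore contributes precisely $L_3(14-12\lambda)$ tuples. For the subcase $\nu_3\neq\nu_4$ with $\nu_3\equiv\nu_4(\mathrm{mod}\,2)$ I would use the substitution (\ref{sos3}) to pass to the four-square equation (\ref{qaq2}) with $x<y<z<v$. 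The symmetry of (\ref{xx7}) in $\nu_3,\nu_4$ shows that each representation counted by $L_4(14-12\lambda)$ lifts to the two tuples with $\nu_3>\nu_4$ and with $\nu_3<\nu_4$, and again the parity constraint $v\equiv x+y+z(\mathrm{mod}\,2)$ reduces to $-6\lambda\in\mathbb{N}$; hence this subcase contributes $2L_4(14-12\lambda)$ tuples.

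Adding the two contributions, the number of highest weights with $\lambda(\Lambda)=\lambda$ equals $2L_4(14-12\lambda)+L_3(14-12\lambda)$ whenever $-6\lambda\in\mathbb{N}$, and $\lambda<0$ is an eigenvalue exactly when this total is positive. Splitting on whether $L_4(14-12\lambda)>0$ then yields the two mutually exclusive alternatives: Case~I, with $L_4>0$ and multiplicity count $2L_4+L_3$, and Case~II, with $L_4=0$ but $L_3>0$ and count $L_3$, which is precisely the assertion.

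Since every computational ingredient — the equation (\ref{xx7}), the substitutions (\ref{sos2}) and (\ref{sos3}), and the parity bookkeeping — is already available from the proof of Theorem~\ref{x04}, I expect the only genuinely new point, and the place to be careful, to be the verification that the stronger congruence $\nu_1\equiv\nu_3\equiv\nu_4(\mathrm{mod}\,2)$ deletes exactly the odd-$(\nu_3+\nu_4)$ branch while leaving the other two branches and their counts intact, so that the quantity $56-48\lambda$ drops out completely and the whole spectrum is controlled by $14-12\lambda$ alone.
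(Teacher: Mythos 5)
Your proposal is correct and follows exactly the route the paper intends: its own proof of Theorem~\ref{x05} consists of the single remark that it ``follows from (\ref{xi10}) and proof of theorem \ref{x04}'', and what you have written is precisely the fleshed-out version of that remark --- the congruence $\nu_1\equiv\nu_3\equiv\nu_4(\mathrm{mod}\,2)$ from (\ref{xi10}) kills the odd-$(\nu_3+\nu_4)$ branch of the proof of Theorem~\ref{x04}, and the two surviving branches, handled via (\ref{sos2}) and (\ref{sos3}), contribute $L_3(14-12\lambda)$ and $2L_4(14-12\lambda)$ respectively under the condition $-6\lambda\in\mathbb{N}$. Your parity bookkeeping and the factor of $2$ from the $\nu_3\leftrightarrow\nu_4$ symmetry match the paper's argument, so there is nothing to add.
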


\begin{proof}
It follows from (\ref{xi10}) and proof of theorem \ref{x04}.
\end{proof}


\begin{thebibliography}{99}

\bibitem{BerSvir1}
{\sl Berestovskii~V.N., Svirkin~V.M.} The Laplace operator on normal homogeneous Riemannian
manifolds, Siberian Adv. Math., 20(2010), {\No}~4, p.~231-255.

\bibitem{Svir3}
{\sl Svirkin~V.M.} The spectrum of the Laplace operator on connected compact rank two and one Lie groups (Russian).
 Uchen. zap. Kazan. gos. un-ta. Ser. Fiz.-matem. nauki. 2010. V.\,152, book. 1. P. 219--234.

\bibitem{BerSvir2}
{\sl Berestovskii~V.N., Svirkin~V.M.} The spectrum of the Laplace operator on compact simply connected rank two Lie groups (Russian).
Uchen. zap. Kazan. gos. un-ta. Ser. Fiz.-matem. nauki. 2009. V.\,151, book. 4. P. 15--35.

\bibitem{Burb}
{\sl Bourbaki~N.} Groupes et algebres de Lie. Chap. IV--VI. Hermann 115, Boulevard Saint-Germain, Paris VI, 1968.


\bibitem{BZS}
{\sl Berestovskii~V.N., Zubareva~I.A., Svirkin~V.M.} Laplace operator spectrim on connected compact simple rank three Lie groups // submitted to Mat. Trudy.


\bibitem{Adams}
{\sl Adams~J.F.} Lectures on Lie groups. W.A.~Benjamin, Inc. New York, Amsterdam, 1969.

\bibitem{Buh}
{\sl Buchshtab~A.A.} Number theory (Russian). M.:\, Uchpedgiz, 1960.

\bibitem{Ven}
{\sl Venkov~B.} Elementary number theory (Russian). M.-L.:\, ONTY NKTP, USSR, 1937.

\bibitem{Dev}
{\sl Davenport~H.} The higher arithmetic. Cambridge University Press, 1999.

\bibitem{C}
{\sl Conway~J.H.} The sensual quadratic form. The Carus Mathematical Monographs (26). The Mathematical Association of America, 1997. 152 p.

\end{thebibliography}
\end{document}